\author{Francesco Dagnino and Fabio Pasquali}
\thanks{acks...} 
\address{
DIBRIS - Universit\`{a} di Genova, Italy \\[5pt]
DIMA - Universit\`{a} di Genova, Italy 
}
\title{Cauchy Completions and the Rule of Unique Choice in Relational Doctrines}
\keywords{hyperdoctrines, Cauchy-completeness, calculus of relations, singletons, monoidal topology}
\newtheorem{theorem}{Theorem} 
\newtheorem{lemma}{Lemma}
\newtheorem{proposition}{Proposition}
\newtheorem{corollary}{Corollary}
\newtheorem{example}{Example} 
\def\eg{{\em e.g.}\xspace} \def\ie{{\slshape i.e.}\xspace}
\def\cf{cf.~} 
\newcommand{\refItem}[2]{\cref{#1}(\ref{#1:#2})}
\def\ple#1{\ensuremath{\langle #1 \rangle }}
\def\vuoto{}
\newcommand{\FUN}[4]{\ensuremath{{#2}#1{#3}\rightarrow{#4}}}
\newcommand{\fun}[3]{\relax\def\testa{#1}\relax\ifx\testa\vuoto
  \relax\FUN{}{}{{#2}}{{#3}}\else\relax\FUN{:}{{#1}}{{#2}}{{#3}}\fi}
\newcommand{\id}{\mathsf{id}}
\newcommand{\PW}[2][]{\ensuremath{\mathop{\mathscr{P}_{#1}{#2}}}}
\newcommand{\pw}[1]{\relax\def\testa{#1}\relax\ifx\testa\vuoto
  \relax\PW{}\else\relax\PW{\left(#1\right)}\fi}
\newcommand{\fpw}[1]{\relax\def\testa{#1}\relax\ifx\testa\vuoto
  \relax\PW[\omega]{}\else\relax\PW[\omega]{\left(#1\right)}\fi}
\newcommand{\abs}[1]{\left|#1\right|}
\newcommand{\nat}[3]{#1:#2\stackrel.\to#3}
\newcommand{\N}{\mathbb{N}}
\newcommand{\R}{\mathbb{R}} 
\newcommand{\RPos}{\R_{\ge 0}} 
\newcommand{\Bool}{\mathbb{B}}
\newcommand{\PP}{\pw{}} 
\newcommand{\Rel}{\mathsf{Rel}}
\newcommand{\scun}{\mathbf{\ni}} 
\newcommand{\sun}{\sigma}
\def\tt{\ensuremath{\textsf{t\kern-.3ex t}}}
\def\ff{\ensuremath{\textsf{f\kern-.3ex f}}}
\let\ForalL\forall \def\Forall#1.{\ForalL_{#1}}
\let\ExistS\exists \def\Exists#1.{\ExistS_{#1}}
\DeclareFontFamily{OT1}{pzc}{}
\DeclareFontShape{OT1}{pzc}{m}{it}{<->s*[1.30]pzcmi7t}{}
\DeclareMathAlphabet{\mathpzc}{OT1}{pzc}{m}{it}
\def\ct#1{\ensuremath{\mathpzc{#1}}}
\def\Ct#1{\ensuremath{\mathbf{#1}}}
\def\op{^{\mbox{\normalfont\scriptsize op}}}
\def\rop{^{\mbox{\normalfont\scriptsize o}}}
\def\co{^{\mbox{\normalfont\scriptsize co}}}
\newcommand{\bop}[1]{(#1\times#1)\op} 
\def\blank{\mathchoice{\mbox{--}}{\mbox{--}}
{\mbox{\scriptsize--}}{\mbox{\tiny--}}}
\newcommand{\CC}{\ct{C}\xspace}
\newcommand{\D}{\ct{D}\xspace}
\newcommand{\oneAr}[3]{\ensuremath{{#1}:{#2}\rightarrow{#3}}}
\newcommand{\twoAr}[3]{\ensuremath{{#1}:{#2}\Rightarrow{#3}}}
\def\tdot{\textbf{.}}
\def\lsta{\vrule depth4pt width0pt}
\def\lstb{\vrule height5pt width0pt}
\def\arnta[#1]{\ar[#1]|-*=0[@]{\lsta\tdot}}
\def\arntb[#1]{\ar[#1]|-*=0[@]{\lstb\tdot}}
\newcommand{\nt}[3]{\ensuremath{{#1}:{#2} \stackrel{\makebox{\kern-.3ex\tdot}}\rightarrow{#3}}}
\newcommand{\lnt}[3]{\ensuremath{{#1}:{#2} \stackrel{\makebox{\kern-.3ex\tdot}}\rightarrow_l{#3}}}
\newcommand{\Hom}[3]{\ensuremath{{#1}({#2},{#3})}}
\newcommand{\Id}{\mathsf{Id}}
\newcommand{\Set}{\ct{Set}\xspace}
\newcommand{\Pos}{\ct{Pos}\xspace}
\newcommand{\Top}{\ct{Top}\xspace} 
\newcommand{\TopDoc}{\textbf{cl}_\beta\xspace}
\newcommand{\RDtn}{\Ct{RD}\xspace} 
\newcommand{\RDtnRuc}{\Ct{RD}_!\xspace}
\newcommand{\RtoRuc}{Ruc\xspace}
\newcommand{\order}{\leq} 
\newcommand{\PDoc}{P}
\newcommand{\RDoc}{R}
\newcommand{\SingAr}{\mathsf{S}}
\newcommand{\Sing}{\fn{\SingAr}}
\newcommand{\SComplAr}{\mathsf{C}}
\newcommand{\SCompl}{\fn{\SComplAr}}
\newcommand{\SDoc}{S} 
\newcommand{\reidx}[1]{_{#1}}
\newcommand{\fn}[1]{\widehat{#1}}
\newcommand{\lift}[1]{\overline{#1}} 
\newcommand{\Ruc}[1]{\mathsf{Map}^{#1}} 
\newcommand{\RelCatSet}{\ct{Rel}} 
\newcommand{\bijective}[1]{\ensuremath{#1}-bijective\xspace} 
\newcommand{\injective}[1]{\ensuremath{#1}-injective\xspace} 
\newcommand{\surjective}[1]{\ensuremath{#1}-surjective\xspace}
\newcommand{\relr}{\alpha}
\newcommand{\rels}{\beta}
\newcommand{\relt}{\gamma} 
\newcommand{\eqrelr}{\rho}
\newcommand{\rid}{\mathsf{d}}
\newcommand{\rcomp}{\mathop{\mathbf{;}}}
\newcommand{\rconv}{^{\bot}} 
\newcommand{\rdconv}{^{\bot\bot}} 
\newcommand{\gr}[1]{\Gamma_{#1}} 
\newcommand{\GrAr}[1][]{\mathbf{\Gamma}_{#1}} 
\newcommand{\IncAr}[1][]{\mathbf{\iota}_{#1}} 
\newcommand{\exteq}{\approx} 
\newcommand{\RUC}{\textsc{(ruc)}\xspace}
\newcommand{\SRUC}{\textsc{(sruc)}\xspace}
\newcommand{\cl}{\textbf{cl}\xspace}
\newcommand{\complete}[2][]{#2\ifblank{#1}{}{^{#1}}_{!}} 
\newcommand{\VRel}[1]{{#1}\text{-}\mathsf{Rel}}
\newcommand{\Qtl}{V}
\newcommand{\Car}[1]{|#1|}
\newcommand{\qord}{\preceq}
\newcommand{\qmul}{\cdot}
\newcommand{\qone}{1} 
\newcommand{\qsup}{\bigvee}
\newcommand{\dist}{\delta}
\newcommand{\Met}{\ct{Met}}
\newcommand{\Ban}{\ct{Ban}}
\newcommand{\Sub}{\mathsf{Sub}}
\newcommand{\BiMod}{\mathsf{Bimod}}
\newcommand{\SubDoc}{\mathsf{SubRel}}
\newcommand{\MetDoc}{\mathsf{M}}
\newcommand{\BCat}[1]{#1\text{-}\ct{Cat}}
\newcommand{\BCatss}[1]{#1\text{-}\ct{Cat}_{ss}}
\newcommand{\Per}[1]{\ct{Per}_{#1}}
\def\RB#1{\mathchoice
  {\rotatebox[origin=c]{180}{$#1$}}
  {\rotatebox[origin=c]{180}{$#1$}}
  {\rotatebox[origin=c]{180}{$\scriptstyle#1$}}
  {\rotatebox[origin=c]{180}{$\scriptscriptstyle#1$}}}
\def\Ex{\RB{E}\kern-.3ex}
\def\Al{\RB{A}\kern-.6ex}
\newcommand{\Map}[1]{\ct{Map}(#1)}
\newcommand{\SVecDoc}{\mathsf{SV}} 
\newcommand{\vecx}{{\bf x}}
\newcommand{\vecy}{{\bf y}} 
\newcommand{\vecz}{{\bf z}}
\newcommand{\veczero}{{\bf 0}} 
\newcommand{\Norm}[2][]{\|#2\|_{#1}}
\newcommand{\un}{\eta}
\newcommand{\cun}{\epsilon}
\newcommand{\mnd}{\mathbb{T}} 
\newcommand{\mfun}{T}
\newcommand{\mun}{\eta}
\newcommand{\mmul}{\mu} 
\newcommand{\EM}[2]{#1^{#2}} 
\newcommand{\CHEM}[2]{\EM{#1}{#2\mathsf{ch}}} 
\newcommand{\SP}[1]{\ensuremath{#1\text{-}\ct{Sp}}\xspace} 
\newcommand{\CHSP}[1]{\ensuremath{#1\text{-}\ct{Sp}_{\mathsf{ch}}}\xspace} 
\newcommand{\ralg}{\phi}
\newcommand{\ralgb}{\psi}
\newcommand{\SPFun}{G} 
\newcommand{\CHSPFun}{\SPFun_{\mathsf{ch}}} 
\newcommand{\SCFun}{SC} 
\newcommand{\rtc}[2][]{J\ifblank{#1}{}{_{#1}}^\star(#2)}
  \crefname{fig}{Figure}{Figures}
  \crefname{thm}{Theorem}{Theorems}
  \crefname{lem}{Lemma}{Lemmas}
  \crefname{def}{Definition}{Definitions}
  \crefname{prop}{Proposition}{Propositions}
  \crefname{cor}{Corollary}{Corollaries}
  \crefname{ex}{Example}{Examples}
  \crefname{rem}{Remark}{Remarks}
  \crefname{asm}{Assumption}{Assumptions}
\begin{document}

\maketitle
\begin{abstract}
Lawvere's generalised the notion of complete metric space to the field of enriched categories: 
an enriched category is said to be Cauchy-complete if 
every left adjoint bimodule into it is represented by an enriched  functor.  
Looking at this definition from a logical standpoint, 
regarding bimodules as an abstraction of relations and functors as an abstraction of functions, 
Cauchy-completeness resembles a formulation of the rule of unique choice. 
In this paper, we make this analogy precise, using the language of relational doctrines, 
a categorical tool that provides a functorial description of the calculus of relations, in the same way Lawvere's hyperdoctrines give a functorial description of predicate logic. 
Given a relational doctrine, 
we define Cauchy-complete objects as those objects of the  domain category satisfying the rule of unique choice. 
Then, we present a universal construction that completes a relational doctrine with the rule of unique choice, that is, 
producing  a new relational doctrine where all objects are Cauchy-complete. 
We also introduce relational doctrines with singleton objects and show that these have the minimal structure needed to build the reflector of the full subcategory of its domain on Cauchy-complete objects. 
The main result is that this reflector exists if and only if the relational doctrine has singleton objects and this happens if and only if its restriction to Cauchy-complete objects is equivalent to its completion with the rule of unique choice. 
We support our results with many examples, also falling outside the scope of standard doctrines, such as 
complete metric spaces, Banach spaces and compact Hausdorff spaces in the general context of monoidal topology, 
which are all shown to be Cauchy-complete objects for appropriate relational doctrines. 
\end{abstract}



\section{Introduction}
\label{sect:intro}

Cauchy-completeness is a concept coming from metric spaces: 
a metric space is said to be Cauchy-complete if every Cauchy-sequence converges to a limit point. 
In \cite{Lawvere73} Lawvere vastly generalised this notion, bringing it to the realm of enriched categories. 
Lawvere's key observation was that metric spaces are enriched categories over the quantale of extended non-negative real numbers and 
Cauchy-sequences in a space $X$ correspond to left adjoint bimodules from the one point space into $X$. 
Hence, he defined an enriched category \ct{B} to be Cauchy-complete when every left adjoint bimodule with codomain \ct{B}  is (derived from) an enriched functor into \ct{B}, recovering Cauchy-complete metric spaces as a special case. 
This notion, together with the associated Cauchy-completion, 
has been extensively studied and adapted in many different contexts 
\cite{Betti1982, Borceux1986, pjm/1102700481,Rosolini2000ANO, Street81}. 

From a logical standpoint, the notion of Cauchy-completeness resembles a form of \emph{choice rule}. 
If we regard bimodules as some sort of (binary) relations between enriched categories and enriched functors as  some sort of functions, 
then left adjoint bimodules are functional and total relations \cite{Street81} 
and so, Cauchy-completeness amounts to requiring that 
every functional and total relation is (the graph of) a function. 
This is precisely a formulation of the \emph{rule of unique choice}.
The purpose of this paper is to make this analogy precise, studying Cauchy-completeness and related constructions 
from a logical perspective. 

One of the most versatile tool to treat logic categorically are hyperdoctrines, or simply doctrines, introduced by Lawvere in \cite{Lawvere69,Lawvere70}. 
Doctrines give a functorial description of (theories) in predicate logic:
they are contravariant functors on a base category \ct{C}, which can be regarded as a category of contexts and substitutions, 
mapping each object to the poset of predicates over it ordered by logical entailment. 
The key feature of doctrines is that logical structures, such as connectives and quantifiers, become algebraic structures. 
This makes doctrines a simple and powerful framework, 
suitable to be adapted to the various logical systems one wants to work with and, 
indeed, it is not surprising that many variants appeared in the literature (see \cite{JacobsB:catltt,PittsCL} and references therein).

When doctrines have enough structure, \ie, conjunctions, equalities and existential quantifiers, 
one can easily formulate choice rules using this language (see for example \cite{Maietti-Rosolini16,MPR,Pep18}. 
So it might appear obvious that one can study Cauchy-completeness in the setting of doctrines, 
recovering also some motivating examples from enriched categories, such as metric spaces. 
However, doctrines do not recover this latter example in a natural way. In the definition of Cauchy-completeness, distances play the role of identity relations and 
since distances can be seen as equivalence relations rewritten with a monoidal operation, 
a naive attempt would be finding a doctrine over the category of metric spaces with a monoidal conjunction and where equality predicates are given by distances. 
This approach fails 
essentially because, as pioneered by Lawvere, equality predicates are given by left adjoints and, as shown in \cite{DagninoP22}, left adjoints necessarily produce trivial distances. 

These difficulties arise from the fact that doctrines, abstracting predicate logic, have to take care of variables,
which is not trivial at all when dealing with a monoidal setting. 
On the other hand, one can not avoid the use of variables, 
as they are needed to model relations, 
which are nothing but predicates over contexts with many variables. 
To overcome this problem, 
in this paper we make the shift of working with \emph{relational doctrines} \cite{DagninoP23}, \ie, doctrines abstracting (a fragment of) the \emph{calculus of relations} \cite{Givant1,Peirce,Tarski41}. 
The calculus of relation  is a variable-free alternative to first order logic, where the primitive concepts are (binary) relations instead of (unary) predicates, together with some basic operations, such as relational identities, composition and converse. 
In general this calculus is less expressive than first order logic,\footnote{The calculus of relations is equivalent to first order logic with three variables \cite{Givant06}.}
but it is still quite expressive: 
for instance, it suffices to axiomatize set theory \cite{tarski1988formalization}. 
Thus, a relational doctrine on a category \ct{C}
is a contravariant functor mapping objects $(A,B)$ in the product of $\ct{C}$ with itself to a poset that collects relations from $A$ to $B$, 
together with transformations modelling relational identities, composition and converse. 

Given a relational doctrine, we define objects of the base that are Cauchy-complete as those that satisfy the rule of unique choice and we say that a doctrine satisfies the rule of unique choice when every object of its base is Cauchy-complete. 
We describe a 2-categorical construction that freely completes a relational doctrine with the rule of unique choice. 
This is obtained by adding left adjoint relations as new arrows in the base category, leaving relations unchanged. 
We also introduce the class of relational doctrines with singleton objects. 
These doctrines have 
the minimal structure needed to build the reflector of the full subcategory on Cauchy-complete objects, that is, the Cauchy-completion.
The main result is that Cauchy-complete objects make a reflective subcategory of the base if and only if the relational doctrine has singleton objects and this happens if and only if the restriction of the relational doctrine to Cauchy-complete objects is equivalent to the completion of the doctrine for the rule of unique choice.
We recover many examples as special cases:
beside the category of Cauchy-complete metric spaces,  we have 
the category of Banach spaces, which are the Cauchy-complete objects for a doctrine over the category of (semi-)normed vector spaces,
the category of compact Hausdorff spaces, which are the Cauchy-complete objects for an appropriate relational doctrine based on the category of topological spaces and 
the category of sheaves for a topology $j$ on an elementary topos \ct{E}, which are the Cauchy-complete objects for the relational doctrines of $j$-closed relations in \ct{E}. 
We also find as instance of our theorem, Walters' characterisation in \cite{W1} of sheaves over a frame as Cauchy-complete categories. 
Finally, we show how compact Hausdorff spaces in the general context of monoidal topology \cite{ClementinoH03,HofmannST14} can be characterised as Cauchy-complete objects for suitable relational doctrines constructed extending the Stone-Cech compactification to this setting. 

The rest of the paper is organised as follows. 
\cref{sect:rel-doc} recalls basic definitions and properties on relational doctrines and introduce some relevant examples.
In \cref{sect:ruleuc}, we introduce the rule of unique choice for relational doctrines, describing the universal construction that freely adds it to any relational doctrine. 
\cref{sect:singletons} defines relational doctrines with singleton  objects and proves our main results. 
Finally, in \cref{sect:compact} we discuss in detail the examples coming from monoidal topology.


\section{Preliminaries on relational doctrines}
\label{sect:rel-doc}

Lawvere's hyperdoctrines, or simply doctrines, introduced in \cite{Lawvere69,Lawvere70}, provide an algebraic approach to the study of syntactic theories and their models, relying on the intuition that a logic can be written as a functor mapping a context to the collections of formulas whose free variables are in that context. More specifically a \emph{doctrine} $\PDoc$ on a category $\CC$ is a contravariant functor \fun{\PDoc}{\CC\op}{\Pos}, 
where  \Pos denotes  the category of posets and monotone functions;
the category \CC is named the \emph{base} of the doctrine and, 
for $X$  in \CC, the poset $\PDoc(X)$ is called \emph{fibre over $X$}. For \fun{f}{X}{Y} an arrow in \CC, the monotone function \fun{\PDoc\reidx{f}}{\PDoc (Y)}{\PDoc (X)} is called \emph{reindexing along $f$}.
The base category consists of the objects of interest with their transformations (such as contexts and substitutions or sets and functions), 
a fibre $\PDoc(X)$ collects and orders the ``properties" of the object $X$ (such as predicates ordered by logical entailment or subsets ordered by inclusion) and 
reindexing allows to transport the properties between objects according to their transformations (such as substitution or inverse imaging).

In \cite{DagninoP23} \emph{relational doctrines} were introduced as a functorial description of the core fragment of the calculus of relations \cite{Tarski41}. 
Since binary relations can be seen as predicates over a pair of objects, relational doctrines are in particular functors of the form \fun{\RDoc}{\bop\CC}{\Pos}, where each fibre $\RDoc(X,Y)$ collects relations from $X$ to $Y$. 
The fragment of the calculus of relations that relational doctrines model is the one given by 
relational identities, composition and converse \cite{Givant1,Peirce,Tarski41},
%
leading to the following definition.

\begin{definition}[Relational Doctrine~\cite{DagninoP23}]\label[def]{def:rel-doc}
A \emph{relational doctrine} consists of: 
\begin{itemize}
\item a base category \CC, 
\item a functor \fun{\RDoc}{\bop\CC}{\Pos}, 
\item an element $\rid_X \in \RDoc(X,X)$, for every object $X$ in \CC, such that 
$\rid_X \order \RDoc\reidx{f,f}(\rid_Y)$, 
for every arrow \fun{f}{X}{Y} in \CC, 
\item a monotone function \fun{\blank\rcomp\blank}{\RDoc(X,Y)\times\RDoc(Y,Z)}{\RDoc(X,Z)}, for every triple of objects $X,Y,Z$ in \CC, such that 
$\RDoc\reidx{f,g}(\relr)\rcomp\RDoc\reidx{g,h}(\rels) \order \RDoc\reidx{f,h}(\relr\rcomp\rels)$, 
for all $\relr\in\RDoc(A,B)$, $\rels\in\RDoc(B,C)$ and 
\fun{f}{X}{A}, \fun{g}{Y}{B} and \fun{h}{Z}{C} arrows in \CC, 
\item a monotone function \fun{(\blank)\rconv}{\RDoc(X,Y)}{\RDoc(Y,X)}, for every pair of objects $X,Y$ in \CC,  such that 
$(\RDoc\reidx{f,g}(\relr))\rconv \order \RDoc\reidx{g,f}(\relr\rconv)$, 
for all $\relr\in\RDoc(A,B)$ and \fun{f}{X}{A} and \fun{g}{Y}{B}, 
\end{itemize}
satisfying the following equations for all 
$\relr\in\RDoc(X,Y)$, $\rels\in\RDoc(Y,Z)$ and $\relt\in\RDoc(Z,W)$
\begin{align*} 
\relr\rcomp(\rels\rcomp\relt) &= (\relr\rcomp\rels)\rcomp\relt 
& 
\rid_X\rcomp\relr &= \relr 
&
\relr\rcomp\rid_Y &= \relr 
\\
(\relr\rcomp\rels)\rconv &= \rels\rconv \rcomp \relr\rconv 
&
\rid_X\rconv &= \rid_X
&
\relr\rdconv &= \relr 
\end{align*} 
\end{definition}

The element $\rid_X$ is the \emph{identity} or \emph{diagonal} relation on $X$, 
$\relr\rcomp\rels$  is the \emph{relational composition} of $\relr$ followed by $\rels$, and 
$\relr\rconv$ is the \emph{converse} of the relation $\relr$.  
Note that all relational operations are lax natural transformations, but 
the operation of taking the converse, being  an involution, is actually strictly natural. 
The requirement of lax naturality becomes clear if one looks at the examples.  Take for instance the relevant one of set-theoretic relations, that can be organised into the relational doctrine \fun{\Rel}{\bop\Set}{\Pos} 
where $\Rel(X,Y)=\PP(X\times Y)$ and  $\Rel(f,g)=(f\times g)^{-1}$. Here it is an easy check that for $\relr\subseteq A\times B$ and $\rels\subseteq B\times C$ and for functions
\fun{f}{X}{A}, \fun{g}{Y}{B} and \fun{h}{Z}{C}, it holds that
$\Rel\reidx{f,g}(\relr)\rcomp\Rel\reidx{g,h}(\rels) =\{(x,z)\mid \exists_{y\in Y} (f(x),g(y))\in \relr\ \text{and}\ (g(y),h(z)\in \rels)\}$ and
$\Rel\reidx{f,h}(\relr\rcomp\rels)=\{(x,z)\mid \exists_{b\in B} (f(x),b)\in \relr\ \text{and}\ (b,h(z))\in\rels\}$;
thus the inclusion $\Rel\reidx{f,g}(\relr)\rcomp\Rel\reidx{g,h}(\rels)\subseteq\Rel\reidx{f,h}(\relr\rcomp\rels)$ is an equality if and only if $g$ is surjective. Similarly the inclusion $\rid_X =\{(x,x')\mid x=x'\}\subseteq \{(x,x')\mid f(x)=f(x')\}= \RDoc\reidx{f,f}(\rid_Y)$ is an equality if and only if $f$ is injective.

\begin{remark}\label[rem]{variabili}
Typically, doctrines modelling predicate logic are based on categories with finite products, which model concatenation of contexts, while product projections represent variables of a context. 
The variable-free nature of the calculus of relations is captured by the lack of  requirements on the base  category $\ct{C}$. A comparison between doctrines modelling predicate logic and relational doctrines is given in 
 \cite{DagninoP23}.
\end{remark}

\begin{remark}\label[rem]{rem:alternative-defs}
There are many alternative ways of defining relational doctrines. 
One can see them as certain internal dagger categories in a category of indexed posets (see \cite{DagninoP23}). 
Alternatively, they can be regarded as faithful framed bicategories \cite{Shulman08}, 
which are double categories with the additional structure of a fibration. 
In \cref{def:rel-doc}, we give a more explicit and elementary description of relational doctrines, 
to stay closer to the usual language of doctrines.
Extending all our results to the more general and proof-relevant setting of framed bicategories is an interesting problem we leave for future work. 
\end{remark}

\begin{example} \label[ex]{ex:rel-doc} 
\begin{enumerate}
\item\label{ex:rel-doc:vrel}
Let $\Qtl = \ple{\Car\Qtl,\qord,\qmul,\qone}$ be a commutative quantale. 
A \emph{$\Qtl$-relation}  \cite{HofmannST14} between sets $X$ and $Y$ is a function \fun{\alpha}{X\times Y}{\Car\Qtl}, where 
$\alpha(x,y)\in\Car\Qtl$ intuitively  measures how much elements $x$ and $y$ are related by $\alpha$. 
We consider 
the functor \fun{\VRel\Qtl}{\bop\Set}{\Pos} where 
$\VRel\Qtl(X,Y) = \Car\Qtl^{X\times Y}$ is the set of $\Qtl$-relations from $X$ to $Y$  with the pointwise order, 
$\VRel\Qtl\reidx{f,g}$ is precomposition with $f\times g$. 
The identity relation, relational composition and converse are defined as follows: 
\[
\rid_X(x,x') = \begin{cases}
\qone & x = x' \\
\bot  & x \ne x' 
\end{cases}
\qquad 
(\relr\rcomp\rels)(x,z) = \qsup_{y\in Y} (\relr(x,y)\qmul\rels(y,z))
\qquad 
\relr\rconv(y,x) = \relr(x,y) 
\]
where $\relr\in\VRel\Qtl(X,Y)$ and $\rels\in\VRel\Qtl(Y,Z)$. 
Special cases of this doctrine are
\fun{\Rel}{\bop\Set}{\Pos} , when the quantale is $\Bool = \ple{\{0,1\},\leq,\land,1}$, and 
metric relations, when the quantale is Lawvere's one $\RPos = \ple{[0,\infty],\ge,+,0}$ as in \cite{Lawvere73}.

\item\label{ex:rel-doc:met}
A metric space is a pair $X=\ple{\Car{X}, \dist_X}$ where $\fun{\dist_X}{\Car{X}\times \Car{X}}{[0,\infty]}$ is a distance, \ie  is such that $\dist_X(x,x)=0$, $\dist_X(x,x')=\dist_X(x',x)$ and $\dist_X(x,x')+\dist_X(x',x'')\geq \dist_X(x,x'')$. A non-expansive map $\fun{f}{X}{Y}$ is a function $\fun{f}{\Car{X}}{\Car{Y}}$ such that $\dist_X(x,x')\geq\dist_Y(f(x),f(x'))$. Metric spaces and non-expansive maps form the category $\Met$ which has a tensor product $X\otimes Y$ where $\Car{X\otimes Y}=\Car{X}\times\Car{Y}$ and $\dist_{X\otimes Y}(\ple{x,y},\ple{x',y'})=\dist_X(x,x')+\dist_Y(y,y')$. The functor $\fun{\MetDoc}{\bop{\Met}}{\Pos}$ maps $\ple{X,Y}$ to the poset of non-expansive maps $\Hom{\Met}{X\otimes Y}{\RPos}$ ordered point-wise (these functions are called bimodules in \cite{Lawvere73}); the action of $\MetDoc$ on a pair of non-expansive maps $\fun{\ple{f,g}}{\ple{A,B}}{\ple{X,Y}}$ and on  $\alpha$ in $\MetDoc(X,Y)$ is the composition $\alpha \circ (f\times g)$. The functor $\MetDoc$ is a relational doctrine: the identity relation, relational composition and converse are
defined as follows: 
\[
\rid_X = \dist_X
\qquad 
(\relr\rcomp\rels)(x,z) = \inf_{y \in Y} (\relr(x,y) + \rels(y,z)) 
\qquad 
\relr\rconv(y,x) = \relr(x,y)
\]
where $\relr\in\MetDoc(X,Y)$ and $\rels\in\MetDoc(Y,Z)$. 
\item\label{ex:rel-doc:vec}

Let $\ct{SVec}$ be the category of seminormed vector spaces over the field of real numbers and short linear maps.
Abusing with the notation, we identify  a (semi-)normed space with its underlying vector space and 
write $\Car X$ and $\Norm[X]\blank$  for the underlying set and the seminorm of the space $X$, respectively. 
Given semi-normed spaces $X$ and $Y$, denote by $X\times Y$ the product of $X$ and $Y$ and write $\Norm[X,Y]{\blank}$ for the semi-norm on it defined by 
$\Norm[X,Y]{\ple{\vecx,\vecy}} = \Norm[X]{\vecx} + \Norm[Y]{\vecy}$.  
The functor 
\fun{\SVecDoc}{\bop{\ct{SVec}}}{\Pos} sends $X,Y$ to the poset of semisemi-norms on $X\times Y$ bounded by $\Norm[X,Y]{\blank}$, i.e., 
functions \fun{\relr}{\Car{X}\times\Car{Y}}{[0,\infty]} such that 
\[ 
  \relr(\vecx,\vecy) + \relr(\vecx',\vecy') \ge  \relr(\vecx + \vecx', \vecy + \vecy') 
\qquad 
  \relr(a\vecx, a\vecy) = |a| \relr(\vecx,\vecy) 
\qquad 
  \Norm[X]{\vecx} + \Norm[Y]{\vecy} \ge \relr(\vecx,\vecy) 
\]
The order is the pointwise extension of the order of the Lawvere's quantale. 
The functor $\SVecDoc$ is a relational doctrine where 
\[
\rid_X(\vecx,\vecx') =  \Norm[X]{\vecx-\vecx'}  
\quad 
(\relr\rcomp\rels)(\vecx,\vecz)  = \inf_{\vecy\in\Car Y} (\relr(\vecx,\vecy) + \rels(\vecy,\vecz)) 
\quad 
\relr\rconv(\vecy,\vecx) = \relr(\vecx,\vecy)
\]
Concerning the axioms that these operations have to satisfy, we only check $\rid_X\rcomp\relr = \relr$, the other being immediate. 
First, from the fact that $\relr$ is bounded by $\Norm[X,Y]{\blank}$, we deduce that 
$ 
  \Norm[X]{\vecx-\vecx'} + \Norm[Y]{\vecy-\vecy'} 
    \ge \relr(\vecx-\vecx', \vecy-\vecy') 
    \ge \relr(\vecx,\vecy) - \relr(\vecx',\vecy') 
$, since $\relr$ is a semi-norm on $X\times Y$,  hence  we get 
$ 
  \Norm[X]{\vecx-\vecx'} + \relr(\vecx',\vecy') + \Norm[Y]{\vecy-\vecy'} \ge \relr(\vecx,\vecy)
$. This implies that 
$ 
  (\rid_X\rcomp\relr)(\vecx,\vecy) 
    \ge \rid_X(\vecx,\vecx') + \relr(\vecx',\vecy) 
    = \Norm[X]{\vecx-\vecx'} + \relr(\vecx',\vecy) + \Norm[Y]{\vecy-\vecy} 
    \ge \relr(\vecx,\vecy)
$, while the other inequality is straightforward.
\item\label{ex:rel-doc:walters}

Let $B$ be a locally partially ordered bicategory with objects in $B_0$ and arrows in $B_1$. Recall from \cite{Betti1982,W1} that a $B$-category $X$ is a triple $\ple{\Car{X}, e_X, d_X}$ of a set $\Car{X}$ together with functions $\fun{e_X}{\Car{X}}{B_0}$ and $\fun{d_X}{\Car{X}\times \Car{X}}{B_1}$ such that $\fun{d_X(x_1,x_2)}{e_X(x_1)}{e_X(x_2)}$ and 
\[
\id_{e_X(x)}\le d_X(x,x)\quad\quad d_X(x_2,x_3) \circ d_X(x_1,x_2)\le d_X(x_1,x_3)
\]
A $B$-functor $\fun{f}{X}{Y}$ is a function $\fun{f}{\Car{X}}{\Car{Y}}$ such that $e_Y(f(x))=e_X(x)$ and $d_X(x_1,x_2)\le d_Y(f(x_1),f(x_2))$. $B$-categories and $B$-functors form the category $\BCat{B}$. A $B$-bimodule (simply a bimodule) $\phi$ from $X$ to $Y$ is a function $\fun{\phi}{\Car{X}\times \Car{Y}}{B_1}$ such that $\fun{\phi(x,y)}{e_X(x)}{e_Y(y)}$ and 
\[
\phi(x,y)\circ d_X(x',x)\le \phi(x',y)\quad\quad d_Y(y,y')\circ \phi(x,y)\le \phi(x,y')
\]
A $B$-category $X$ is symmetric if $d_X(x_1,x_2)=d_X(x_2,x_1)$ and skeletal if $d_X(x_1,x_2)=e_X(x_1)=e_X(x_2)$ implies $x_1=x_2$. Denote by $\BCatss{B}$ the full subcategory of $\BCat{B}$ on symmetric skeletal $B$-categories.

We restrict our attention to the relevant case considered in \cite{W1}, that is when $B$ is $Rel(\ct{H})$, where $\ct{H}$ is a frame. Objects of $Rel(\ct{H})$ are those of $\ct{H}$, while an arrow from $u$ to $v$ is $w\le u\wedge v$ and the composition of two arrows is their meet.  $Rel(\ct{H})$-Bimodules define a relational doctrine 
$\fun{\BiMod}{\bop{\BCatss{Rel(\ct{H})}}}{\Pos}$, 
mapping symmetric and skeletal $Rel(\ct{H})$-categories $X$ and $Y$ to the bimodules from $X$ to $Y$. The identity relation, relational composition and converse are
\[
\rid_{X} = d_X
\qquad 
(\phi\rcomp\psi)(x,z) = \bigvee_{y \in Y} (\phi(x,y) \wedge \psi(y,z)) 
\qquad 
\phi\rconv(y,x) = \phi(x,y)
\]

\item\label{ex:rel-doc:topos}
Let $\ct{E}$ be an elementary topos and denote by $\fun{\Sub}{\ct{E}\op}{\Pos}$ its subobject functor. A relation $r$ from $A$ to $B$ is an element of $\Sub(A\times B)$. As customary we freely confuse subobjects with any of their representatives. Let $r$ be the monic  $\fun{r}{X}{A\times B}$ and $s$ be  $\fun{s}{Y}{B\times C}$. Write $r_1$ and $r_2$ for the compositions of $r$ with projections $\fun{\pi_1}{A\times B}{A}$ and $\fun{\pi_2}{A\times B}{B}$ and similarly for $s_1$ and $s_2$. The relational composition $r\rcomp s$ in $\Sub(A\times C)$ is given by the following pullback
\[
r \rcomp s = \vcenter{\xymatrix@R=3ex@C=3ex{
&& W \ar[ld] \ar[rd] \ar@{}[dd]|{pb} && \\ 
& X \ar[ld]^-{r_1}\ar[rd]_-{r_2} && Y \ar[ld]^-{s_1} \ar[rd]_-{s_2} & \\ 
A && B && C 
}}
\]
The neutral elements of such a composition are the diagonal arrows, while the converse of $r$ is given by $\ple{\pi_2,\pi_1}^*r$, \ie the pullback of $r$ along $\ple{\pi_2,\pi_1}$. A Lawvere-Tierney topology over $\ct{E}$ (or simply a topology) is a natural transformation $\nat{j}{\Sub}{\Sub}$ such that for every subobject $\alpha$ in $\Sub(A)$ it holds that $\alpha \le j_A\alpha$ ($j$ is inflationary); $j_Aj_A\alpha \le j_A\alpha$ ($j$ is idempotent). A subobject $\alpha$ in $\Sub(A)$ is $j$-closed (or simply closed) if $j_A\alpha=\alpha$ \cite{Barr, MaclaneS:sheigl}. We will often omit subscripts from $j$ when these are clear. The subposet of $\Sub(A)$ on closed subobjects will be denoted by $\Sub_j(A)$. For every elementary topos $\ct{E}$ and every topology $j$ over $\ct{E}$ consider the functor $\fun{\SubDoc_j}{\bop{\ct{E}}}{\Pos}$ that maps $\fun{\ple{f,g}}{\ple{X,Y}}{\ple{A,B}}$ to $\fun{(f\times g)^*}{\Sub_j(A\times B)}{\Sub_j(X\times Y)}$. The functor $\SubDoc_j$ is a relational doctrine where
$\rid_X = j\Delta_X$ and  
$\relr \rcomp \rels = j(\relr \rcomp \rels)$ where the relational composition on the right is computed in $\Sub$; due to naturality of $j$, the converse of a closed relations is closed. 

\item\label{ex:rel-doc:kh}
Let $\Top$ be the category of topological spaces and continuous functions. For a space $X$ in $\Top$ denote by $\cl(X)$ the set of closed subsets of $X$ ordered by inclusion. We say that $R$ is a closed relation from the space $X$  to the space $Y$ if $R$ is an element  of $\cl(X\times Y)$. Among all topological spaces, the compact-Hausdorff ones have some pleasant properties with respect to the calculus of relations:  if $Y$ is compact,  $R\in \cl(X\times Y)$ and $S\in \cl(Y\times Z)$, then $R\rcomp S =\{\ple{x,z}\mid \exists_{y\in Y}\ \ple{x,y}\in R\ \text{and}\ \ple{y,z}\in S\}$ is a closed relation, \ie is in $\cl(X\times Z)$, whereas if $X$ and $Z$ are Hausdorff the diagonal relations, that are the neutral elements of the previous composition, are themselves closed. One can ``correct'' the lack of compact-Hausdorffness of a space via the Stone-Chech compactification, that provides the left adjoint $\beta$ to the full inclusion of compact-Hausdorff spaces into $\Top$. This suggests a way to build a relational doctrine of closed relations based on $\Top$. It is the functor $\fun{\TopDoc}{\bop\Top}{\Pos}$ that maps spaces $X$ and $Y$ to $\cl(\beta X \times \beta Y)$ and a pair of continuous functions $f,g$ to $(\beta f\times \beta g)^{-1}$.

\end{enumerate}
\end{example}

Relational doctrines are the objects of the 2-category \RDtn.

A 1-arrow \oneAr{F}{\RDoc}{\SDoc} in \RDtn from \fun{\RDoc}{\bop\CC}{\Pos} to \fun{\SDoc}{\bop\D}{\Pos}, 
is a pair \ple{\fn{F},\lift{F}} consisting of 
a functor \fun{\fn{F}}{\CC}{\D} and a natural transformation 
\nt{\lift{F}}{\RDoc}{\SDoc\circ \bop{\fn{F}}},

\[
\xymatrix@C=7.5em@R=1em{
{\bop\CC}\ar[rd]^(.4){\RDoc}_(.4){}="P"
\ar[dd]_{\bop{\fn{F}}}^{}="F"
&\\
 & {\ct{Pos}}\\
{\bop\D}\ar[ru]_(.4){\SDoc}^(.4){}="R"&
\ar"P";"R"_{\lift{F}\kern.5ex\cdot\kern-.5ex}="b"
}
\]
preserving relational identities, composition and converse, that is, satisfying 
$\rid_{\fn{F}X} = \lift{F}_{X,X}(\rid_X)$ and 
$\lift{F}_{X,Y}(\relr)\rcomp \lift{F}_{Y,Z}(\rels) = \lift{F}_{X,Z}(\relr\rcomp\rels)$ and 
$(\lift{F}_{X,Y}(\relr))\rconv = \lift{F}_{Y,X}(\relr\rconv)$, 
for $\relr\in\RDoc(X,Y)$ and $\rels\in\RDoc(Y,Z)$. 

A 2-arrow \twoAr{\theta}{F}{G} is a natural transformation \nt{\theta}{\fn{F}}{\fn{G}} such that 
$\lift{F}_{X,Y} \order \SDoc\reidx{\theta_X,\theta_Y}\circ \lift{G}_{X,Y}$, for all objects $X,Y$ in the base of $\RDoc$
\[
\xymatrix@C=18em@R=1.5em{
{\bop\CC}\ar[rd]^(.4){\RDoc}_(.4){}="P"
\ar@<-1ex>@/_/[dd]_{\bop{\fn{F}}}^{}="F"\ar@<1ex>@/^/[dd]^{\bop{\fn{F'}}}_{}="G"&\\
 & {\ct{Pos}}\\
{\bop\D}\ar[ru]_(.4){\SDoc}^(.4){}="R"&
\ar@/_/"P";"R"_{\lift{F}\kern.5ex\cdot\kern-.5ex}="b"
\ar@<1ex>@/^/"P";"R"^{\kern-.5ex\cdot\kern.5ex \lift{F'}}="c"
\ar"G";"F"_{.}^{\theta\op}\ar@{}"b";"c"|{\le}}
\]

%
 
In \cite{DagninoP22} also \emph{lax} 1-arrows are considered and 
examples of 1-arrows capturing the notions of  relation lifting, among which the Barr lifting \cite{Barr70}, are discussed there. 

We now report some basic facts about relational doctrines discussed in detail in \cite{DagninoP23}. 
Let us fix a relational doctrine \fun{\RDoc}{\bop{\CC}}{\Pos}.

\paragraph{Graphs} 
Every arrow \fun{f}{X}{Y} defines a relation $\gr{f}=\RDoc\reidx{f,\id_Y}(\rid_Y)\in\RDoc(X,Y)$ called the \emph{graph} of $f$. 
It is easy to see that the construction of graphs of arrows preserves composition and identities, that is, 
$\gr{g\circ f} = \gr{f}\rcomp\gr{g}$ and $\gr{\id_X} = \rid_X$. 
Furthermore, 1-arrows of \RDtn as defined above \emph{preserve graphs}, \ie, 
if \oneAr{F}{\RDoc}{\SDoc} is a 1-arrow and \fun{f}{X}{Y} is an arrow in \CC, we have 
$\lift{F}_{X,Y}(\gr{f}) = \gr{\fn{F}f}$ (note that $\gr{f}$ is a relation in $\RDoc$, while $\gr{\fn{F}f}$ is a relation in $\SDoc$). 

Relational composition allows us to express reindexing in relational terms and to show that it has a left adjoint, where a left adjoint in \Pos to a monotone function $\fun{g}{K}{H}$ is a monotone  function $\fun{f}{H}{K}$ such that for every $x$ in $K$ and $y$ in $H$, both $y\le gf(y)$ and $fg(x)\le x$ hold, or, equivalently, $y\le g(x)$ if and only if $f(y)\le x$. 
For \fun{f}{A}{X} and \fun{g}{B}{Y}  in \CC the reindexing map 
\fun{\RDoc\reidx{f,g}}{\RDoc(X,Y)}{\RDoc(A,B)} and its left adjoint 
\fun{\Ex^\RDoc\reidx{f,g}}{\RDoc(A,B)}{\RDoc(X,Y)} are given as follows: 
for $\relr\in\RDoc(X,Y)$ and $\rels\in\RDoc(A,B)$ 
\[
\RDoc\reidx{f,g}(\relr) = \gr{f} \rcomp \relr \rcomp \gr{g}\rconv 
\qquad 
\Ex^\RDoc\reidx{f,g}(\rels) = \gr{f}\rconv \rcomp \rels \rcomp \gr{g} 
\]
Thus, in the following, we will avoid the use of reindexing in calculations, replacing it by relational composition with graphs. 
Note that, since 1-arrows preserves graphs and left adjoints are defined in terms of graphs, they are preserved by 1-arrows. 

\paragraph{Extensional equality} 
Given parallel arrows \fun{f,g}{X}{Y} the condition $\gr{f}=\gr{g}$ defines an (external) equivalence relation $\exteq$ on the set $\Hom{\CC}{X}{Y}$.
We say that $f$ and $g$ are  \emph{$\RDoc$-equal}, or \emph{extensionally equal}, when  $f\exteq g$. 
The condition $\gr{f}=\gr{g}$  can be equivalently written as $\rid_X \order \RDoc\reidx{f,g}(\rid_Y)$, therefore, unfolding it when $\RDoc$ is $\Rel$, one find that  two functions $f$ and $g$ are $\Rel$-equal if for every $x$ and $y$ in the domain of $f$ and $g$, the equality $x = y$ implies $f(x) = g(y)$. 
In $\Rel$ this suffices to show that $f=g$, so $\Rel$-equal functions are actually equal, but for a general relational doctrine this need not hold.

\begin{definition} \label[def]{def:ext-eq}
Let \fun{\RDoc}{\bop\CC}{\Pos} be a relational doctrine. 
We say that the object $Y$ of $\CC$ is \emph{extensional} if 
for every \fun{f,g}{X}{Y}, $f \exteq g$ implies $f = g$. We say that the relational doctrine $\RDoc$ is  \emph{extensional} if every object of the base is extensional.
\end{definition}

\begin{example} \label[ex]{ex:extensional} 
\begin{enumerate}
\item\label{ex:extensional:vrel}
The relational doctrine 
 \fun{\VRel\Qtl}{\bop\Set}{\Pos} introduced in \refItem{ex:rel-doc}{vrel} is always extensional.
\item\label{ex:extensional:met}
Consider the relational doctrine $\fun{\MetDoc}{\bop{\Met}}{\Pos}$ described in \refItem{ex:rel-doc}{met}.  A metric space is extensional precisely when it is separated, \ie when $\delta_X(x,x')=0$ implies $x=x'$.
\item\label{ex:extensional:vec}
Consider the relational doctrine \fun{\SVecDoc}{\bop{\ct{SVec}}}{\Pos} described in \refItem{ex:rel-doc}{vec}. A space in $\ct{SVec}$ is extensional if the semi-norm is a norm, \ie if $\| \vecx \|=0$ implies $\vecx={\bf 0}$.

\item\label{ex:extensional:walters}
In the relational doctrine $\fun{\BiMod}{\bop{\BCatss{Rel(\ct{H})}}}{\Pos}$ in
\refItem{ex:rel-doc}{walters} the skeletality condition gives precisely the extensionality of $\BiMod$.

\item\label{ex:extensional:topos}
Let $j$ be  a topology over an elementary topos $\ct{E}$ and consider the relational doctrine $\SubDoc_j$ of \refItem{ex:extensional}{topos}. An object $A$ of $\ct{E}$ is extensional  precisely when $j\Delta_A=\Delta_A$, \ie when it is $j$-separated \cite{MaclaneS:sheigl}.

\item\label{ex:extensional:kh}
Consider the relational doctrine $\fun{\TopDoc}{\bop\Top}{\Pos}$ of \refItem{ex:extensional}{kh}. Here two continuous functions $f$ and $g$ are $\TopDoc$-equal precisely when $\beta f=\beta g$. The extensional topological spaces are the completely Hausdorff ones, \ie those spaces $X$ such that for every pair of different points $x\not =y$ there is a continuous $\fun{f}{X}{[0,1]}$ with $f(x)=0$ and $f(y)=1$ or, equivalently, those spaces $X$ such that  unite $\fun{\eta_X}{X}{\beta X}$ of the Stone-Cech compactification is injective. Indeed suppose $X$ is completely Hausdorff and let $\beta f=\beta g$. Since for every continuous function $\fun{f}{A}{X}$ it holds that $\beta f\circ \eta_A=\eta_X\circ f$, if $\eta_X$ is injective, then $f=g$. Conversely let $X$ be extensional, and take two points $x\not=y$ of $X$, \ie two different continuous functions $\fun{x,y}{1}{X}$. Since $\beta 1$ is $1$, the points $x,y$ determines two points $\fun{\beta x,\beta y}{1}{\beta X}$, that must be different as by extensionality of $X$ the equality $\beta x=\beta y$ would imply $x=y$. But $\beta x=\eta_X(x)$ and $\beta y=\eta_X(y)$ so $\eta_X(x)\not=\eta_X(y)$ showing that $\eta_X$ is injective.
\end{enumerate}
\end{example}

\paragraph{Duality}
Thanks to the existence of left adjoints along all arrows in \CC, we are able to define the \emph{opposite} of $\RDoc$, 
\ie, a relational doctrine $\RDoc\rop$ on the category $\CC\op$.
This is essentially a functor \fun{\RDoc\rop}{\CC\times\CC}{\Pos} defined by 
\[\xymatrix{
   \ple{X,Y} 
   \ar[dd]_-{\ple{f,g}} 
&& \RDoc(X,Y)\op 
   \ar[dd]^-{\Ex^\RDoc\reidx{f,g}} 
\\
   \qquad 
   \ar@{|->}[rr]^-{\RDoc\rop} 
&& \qquad 
\\
   \ple{A,B}
&& \RDoc(A,B)\op 
}\]
where the relational structure is the same as $\RDoc$. 
Note that, the fact that we take the opposite order in the fibres is essential to prove that reindexing of $\RDoc\rop$ laxly preserves the relational structure. 
This construction extends to a 2-functor 
\fun{(\blank)\rop}{\RDtn\co}{\RDtn}, which is an involution.


\section{The rule of unique choice}
\label{sect:ruleuc}

The rule of unique choice is a choice principle, weaker than the rule of choice, saying that a relation  that behaves like a function (in the sense that it relates every element of  its domain to a unique element in its codomain) is the graph of a function that for every $x$ in the domain picks the unique $y$ related to $x$. 
In the context of relational doctrines this can be given a formal definition, as we will see in this section. 

Let  $\RDoc$ be  a relational doctrine on $\CC$. 
We first identify those relations in $\RDoc(X,Y)$, generalising usual set-theoretic notions of relations that are functional (\ie if $y$ and $y'$ are related to the same $x$, then $y=y'$), total (\ie every $x$ is related to at least one $y$), injective (\ie different points in the domain are related to different points in the codomain) and surjective (\ie every $y$ in the codomain is related to at least one $x$ in the domain).
A relation $\relr$ in $\RDoc(X,Y)$ is said to be 
\begin{center}
\begin{tabular}{ll}
\emph{functional} if & $\relr\rconv \rcomp \relr \order \rid_Y$\\
\emph{total} if& $\rid_X \order \relr\rcomp\relr\rconv$ \\ 
\emph{injective} if & $\relr\rcomp\relr\rconv \order \rid_X$ \\ 
\emph{surjective} if & $\rid_Y \order \relr\rconv\rcomp\relr$ 
\end{tabular}
\end{center}
Note that $\relr$ is injective if and only if $\relr\rconv$ is functional and 
$\relr$ is surjective if and only if $\relr\rconv$ is total. 
Finally, $\relr$ is \emph{bijective} if both $\relr$ and $\relr\rconv$ are functional and total, that is,  if we have 
$\relr\rcomp\relr\rconv = \rid_X$ and $\relr\rconv\rcomp\relr = \rid_Y$. 

From the definition of total and functional relations, it follows immediately that  they are part of adjunctions, in the sense that, 
if $\relr$ is total and functional, then 
$\relr\rconv\rcomp\rels \order \relt$ if and only if $\rels \order \relr\rcomp\relt$ and 
$\rels\rcomp\relr \order \relt$ if and only if $\rels \order \relt\rcomp\relr\rconv$. 
Moreover, they form a discrete poset with respect to the order of the fibres of $\RDoc$.

\begin{proposition}[\cite{DagninoP23}]\label[prop]{prop:tot-fun-discrete}
Let $\relr$ and $\rels$ be total and functional relations in $\RDoc(X,Y$. Then, $\relr\order\rels$ implies $\relr = \rels$. 
\end{proposition}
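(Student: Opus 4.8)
The plan is to prove the contrapositive-free statement directly by establishing the reverse inequality $\rels\order\relr$; combined with the hypothesis $\relr\order\rels$ this gives $\relr=\rels$ by antisymmetry of the fibre order of $\RDoc$. The whole argument is a short chain of inequalities in the calculus of relations, and the key observation is that the two hypotheses play asymmetric roles: I would use \emph{totality of $\relr$} to expand $\rels$, and \emph{functionality of $\rels$} to contract it back, with the assumption $\relr\order\rels$ and monotonicity of converse serving as the bridge between the two.

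First I would start from the unit law $\rels=\rid_X\rcomp\rels$ and insert totality of $\relr$, namely $\rid_X\order\relr\rcomp\relr\rconv$. By monotonicity of composition and associativity this yields $\rels\order\relr\rcomp\relr\rconv\rcomp\rels$. This isolates the middle factor $\relr\rconv\rcomp\rels$ in $\RDoc(Y,Y)$, and the goal becomes to bound it above by $\rid_Y$.

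To bound the middle factor, I would apply the hypothesis $\relr\order\rels$: since converse is monotone, $\relr\rconv\order\rels\rconv$, hence $\relr\rconv\rcomp\rels\order\rels\rconv\rcomp\rels$, and functionality of $\rels$, that is $\rels\rconv\rcomp\rels\order\rid_Y$, closes the gap to give $\relr\rconv\rcomp\rels\order\rid_Y$. Substituting this back into the previous inequality through monotonicity and the unit law $\relr\rcomp\rid_Y=\relr$ produces $\rels\order\relr\rcomp\rid_Y=\relr$, which is exactly the reverse inequality sought.

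I do not expect a genuine obstacle here, since every step is an instance of monotonicity, associativity, a unit law, or one of the four defining inequalities recalled just above the statement. The only thing that requires care is the bookkeeping: choosing totality for $\relr$ and functionality for $\rels$ (rather than the reverse), and placing the converse so that $\relr\order\rels$ and the functionality of $\rels$ can be chained. Conceptually, this is the order-theoretic shadow of the adjunction behaviour of total and functional relations noted before the proposition, and it shows that any two distinct total and functional relations in the same fibre are necessarily incomparable.
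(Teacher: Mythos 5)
Your proof is correct: the chain $\rels = \rid_X\rcomp\rels \order \relr\rcomp\relr\rconv\rcomp\rels \order \relr\rcomp\rels\rconv\rcomp\rels \order \relr\rcomp\rid_Y = \relr$ uses only the unit laws, monotonicity of composition and converse, totality of $\relr$, the hypothesis $\relr\order\rels$, and functionality of $\rels$, and antisymmetry then gives $\relr=\rels$. The paper does not reproduce a proof here (the proposition is imported from \cite{DagninoP23}), and your argument is exactly the standard one for this fact --- that functional and total relations, being left adjoints, are discretely ordered --- so it coincides with the intended proof.
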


It is also easy to see that graphs of arrows are always functional and total relations. 
We will say that an arrow $f$ in \CC  is \injective{\RDoc}, \surjective{\RDoc} or \bijective{\RDoc}, respectively, when its graph $\gr{f}$ is injective, surjective or bijective, respectively. 

\begin{proposition}\label[prop]{prop:arrows}
Let \fun{f}{A}{X} and \fun{g}{B}{Y} be arrows in \CC. 
The following hold: 
\begin{enumerate}
\item\label{prop:arrows:inj}
if $f$ and $g$ are \injective\RDoc, then $\RDoc\reidx{f,g} \circ \Ex^\RDoc\reidx{f,g} = \id_{\RDoc(A,B)}$; 
\item\label{prop:arrows:sur}
if $f$ and $g$ are \surjective\RDoc, then $\Ex^\RDoc\reidx{f,g} \circ\RDoc\reidx{f,g} = \id_{\RDoc(X,Y)}$; 
\item\label{prop:arrows:bij}
if $f$ and $g$ are \bijective\RDoc, then $\RDoc\reidx{f,g}$ is an isomorphism. 
\end{enumerate}
\end{proposition}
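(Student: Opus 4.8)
The plan is to reduce all three statements to short computations in the calculus of relations, leaning on the two facts recorded in the \emph{Graphs} paragraph above: that reindexing and its left adjoint are expressed through relational composition with graphs, namely $\RDoc\reidx{f,g}(\relr) = \gr{f}\rcomp\relr\rcomp\gr{g}\rconv$ and $\Ex^\RDoc\reidx{f,g}(\rels) = \gr{f}\rconv\rcomp\rels\rcomp\gr{g}$, and that the graph of any arrow is always total and functional. Once everything is rewritten in these terms, the argument is pure associativity plus the unit laws for $\rid$.

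First I would upgrade the defining inequalities of injectivity and surjectivity to equalities. Since $\gr{f}$ is total, $\rid_A \order \gr{f}\rcomp\gr{f}\rconv$ holds unconditionally, so if $f$ is \injective{\RDoc} the reverse inequality $\gr{f}\rcomp\gr{f}\rconv \order \rid_A$ yields $\gr{f}\rcomp\gr{f}\rconv = \rid_A$; dually, since $\gr{f}$ is functional, $\gr{f}\rconv\rcomp\gr{f} \order \rid_X$ holds unconditionally, so if $f$ is \surjective{\RDoc} we get $\gr{f}\rconv\rcomp\gr{f} = \rid_X$. The same equalities hold for $g$, with $A,X$ replaced by $B,Y$.

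For part~\ref{prop:arrows:inj}, I would take $\rels\in\RDoc(A,B)$, substitute the two graph formulas, and regroup by associativity to get $\RDoc\reidx{f,g}(\Ex^\RDoc\reidx{f,g}(\rels)) = (\gr{f}\rcomp\gr{f}\rconv)\rcomp\rels\rcomp(\gr{g}\rcomp\gr{g}\rconv)$; the injectivity equalities $\gr{f}\rcomp\gr{f}\rconv = \rid_A$ and $\gr{g}\rcomp\gr{g}\rconv = \rid_B$ together with the unit laws then collapse this to $\rels$. Part~\ref{prop:arrows:sur} is symmetric: for $\relr\in\RDoc(X,Y)$ the composite $\Ex^\RDoc\reidx{f,g}(\RDoc\reidx{f,g}(\relr))$ regroups as $(\gr{f}\rconv\rcomp\gr{f})\rcomp\relr\rcomp(\gr{g}\rconv\rcomp\gr{g})$ and collapses to $\relr$ via the surjectivity equalities $\gr{f}\rconv\rcomp\gr{f} = \rid_X$ and $\gr{g}\rconv\rcomp\gr{g} = \rid_Y$. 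Finally, for part~\ref{prop:arrows:bij} I would unfold the definition of \bijective{\RDoc} and observe that, because $\gr{f}$ and $\gr{g}$ are automatically total and functional, bijectivity amounts exactly to being simultaneously \injective{\RDoc} and \surjective{\RDoc}; hence parts~\ref{prop:arrows:inj} and~\ref{prop:arrows:sur} both apply and exhibit $\Ex^\RDoc\reidx{f,g}$ as a two-sided inverse of $\RDoc\reidx{f,g}$.

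I do not expect a genuine obstacle: all the content sits in the graph-based presentation of reindexing, after which the proof is mechanical. The only point demanding care is bookkeeping, namely tracking on which object each diagonal $\rid$ lives (for instance $\rid_A$ in part~\ref{prop:arrows:inj} versus $\rid_X$ in part~\ref{prop:arrows:sur}) and keeping the totality and functionality inequalities correctly oriented, since interchanging them would break the cancellations.
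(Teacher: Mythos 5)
Your proof is correct and follows essentially the same route as the paper: the paper likewise rewrites $\RDoc\reidx{f,g}$ and $\Ex^\RDoc\reidx{f,g}$ as composition with graphs, reads \injective\RDoc as the equality $\gr{f}\rcomp\gr{f}\rconv = \rid_A$, and collapses the composite by associativity and the unit laws (it does this explicitly only for part~(1), declaring the other cases analogous). The only difference is that you spell out the small step the paper leaves implicit, namely that totality and functionality of graphs upgrade the defining inequalities of injectivity and surjectivity to equalities, and that you carry out parts~(2) and~(3) in full.
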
 
\begin{proof}
We prove only \cref{prop:arrows:inj}, the other being analogous. 
Suppose that $f$ and $g$ are \injective\RDoc, \ie, $\gr{f}\rcomp\gr{f}\rconv = \rid_A$ and $\gr{g}\rcomp\gr{g}\rconv = \id_A$. 
For all $\relr\in\RDoc(A,B)$, we have 
$\RDoc\reidx{f,g}(\Ex^\RDoc\reidx{f,g}(\relr)) 
  = \gr{f}\rcomp\gr{f}\rconv\rcomp\relr\rcomp\gr{g}\rcomp\gr{g}\rconv 
  = \relr$, as needed. 
\end{proof}

\begin{proposition}\label[prop]{prop:arrows-iff}
Let \fun{f}{X}{Y} be an arrow in \CC. 
The following hold: 
\begin{enumerate}
\item\label{prop:arrows-iff:inj}
$f$ is \injective\RDoc if and only if $\RDoc\reidx{f,f}\circ \Ex^\RDoc\reidx{f,f} = \id_{\RDoc(X,X)}$; 
\item\label{prop:arrows-iff:sur}
$f$ is \surjective\RDoc if and only if $\Ex^\RDoc\reidx{f,f}\circ \RDoc\reidx{f,f} = \id_{\RDoc(Y,Y)}$; 
\item\label{prop:arrows-iff:bij}
$f$ is \bijective\RDoc if and only if $\RDoc\reidx{f,f}$ is  an isomorphism. 
\end{enumerate}
\end{proposition}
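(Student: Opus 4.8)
The plan is to obtain every ``only if'' direction as the instance of \cref{prop:arrows} in which both of its arrows are taken to be $f$ (so that the fibres $\RDoc(A,B)$ and $\RDoc(X,Y)$ there become $\RDoc(X,X)$ and $\RDoc(Y,Y)$ here), and to concentrate the work on the ``if'' directions. Those I would prove by unfolding the two composite endomaps through the graph presentation $\RDoc\reidx{f,f}(\relr)=\gr f\rcomp\relr\rcomp\gr f\rconv$ and $\Ex^\RDoc\reidx{f,f}(\rels)=\gr f\rconv\rcomp\rels\rcomp\gr f$, evaluating at the relevant identity relation, and combining the outcome with the fact that the graph $\gr f$ is always total and functional.

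For \refItem{prop:arrows-iff}{inj}, the only-if direction is \refItem{prop:arrows}{inj}. For the converse, writing $e=\gr f\rcomp\gr f\rconv\in\RDoc(X,X)$, the composite $\RDoc\reidx{f,f}\circ\Ex^\RDoc\reidx{f,f}$ sends $\relr$ to $e\rcomp\relr\rcomp e$, so evaluating the hypothesis at $\relr=\rid_X$ gives $e\rcomp e=\rid_X$. Since $\gr f$ is total we have $\rid_X\order e$, and monotonicity of $\rcomp$ then yields $e=\rid_X\rcomp e\order e\rcomp e=\rid_X$, which is exactly \injective{\RDoc}ness of $f$. Part \refItem{prop:arrows-iff}{sur} is the precise dual: the only-if direction is \refItem{prop:arrows}{sur}, and for the converse I set $m=\gr f\rconv\rcomp\gr f\in\RDoc(Y,Y)$, note that the other composite sends $\rels$ to $m\rcomp\rels\rcomp m$, instantiate at $\rels=\rid_Y$ to get $m\rcomp m=\rid_Y$, and use functionality of $\gr f$ (which gives $m\order\rid_Y$) together with monotonicity to conclude $\rid_Y\order m$, i.e.\ surjectivity.

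For \refItem{prop:arrows-iff}{bij}, the only-if direction is \refItem{prop:arrows}{bij}. For the converse, the idea is to exploit that $\Ex^\RDoc\reidx{f,f}$ is left adjoint to $\RDoc\reidx{f,f}$: if the latter is an isomorphism, then its inverse is again a left adjoint to it, and since left adjoints between posets are unique, the inverse must coincide with $\Ex^\RDoc\reidx{f,f}$. Hence both composites $\RDoc\reidx{f,f}\circ\Ex^\RDoc\reidx{f,f}$ and $\Ex^\RDoc\reidx{f,f}\circ\RDoc\reidx{f,f}$ are identities, so by the converse directions just established $f$ is both \injective{\RDoc} and \surjective{\RDoc}. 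Finally, because $\gr f$ is always functional and total, injectivity turns the inequality $\rid_X\order\gr f\rcomp\gr f\rconv$ into the equality $\gr f\rcomp\gr f\rconv=\rid_X$ and surjectivity turns $\gr f\rconv\rcomp\gr f\order\rid_Y$ into $\gr f\rconv\rcomp\gr f=\rid_Y$; these two equalities are exactly $\RDoc$-bijectivity of $f$.

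The one genuinely non-mechanical point I anticipate is the converse of \refItem{prop:arrows-iff}{bij}: being an isomorphism is a priori stronger than the two one-sided composite identities of \refItem{prop:arrows-iff}{inj} and \refItem{prop:arrows-iff}{sur}, and the clean bridge is the uniqueness-of-adjoints observation that forces the categorical inverse to coincide with the pre-existing left adjoint $\Ex^\RDoc\reidx{f,f}$, after which the claim collapses onto the first two items. Everything else is short order-theoretic bookkeeping whose only ingredients are totality and functionality of graphs and monotonicity of relational composition.
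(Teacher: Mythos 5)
Your proof is correct and takes essentially the same route as the paper: the only-if directions are quoted from \cref{prop:arrows}, and the if directions are obtained by evaluating the composite endomap at the identity relation and combining the result with totality (resp.\ functionality) of $\gr{f}$ and monotonicity of $\rcomp$, which is verbatim the paper's argument for item (1). The paper writes out only item (1) and calls the others ``analogous''; your uniqueness-of-adjoints step in item (3) is exactly the detail needed to make that claim rigorous, since an isomorphism hypothesis is not literally one of the composite-identity hypotheses of items (1) and (2).
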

\begin{proof}
We prove only \cref{prop:arrows-iff:inj}, the other being analogous. 
The left-to-right implication follows by \refItem{prop:arrows}{inj}. 
For the other one, we have 
$\rid_X = \RDoc\reidx{f,f}(\Ex^\RDoc\reidx{f,f}(\rid_A)) = \gr{f}\rcomp\gr{f}\rconv\rcomp\gr{f}\rcomp\gr{f}\rconv$. 
Then, sing $\gr{f}$ is total, we get 
$\gr{f}\rcomp\gr{f}\rconv 
  \order  \gr{f}\rcomp\gr{f}\rconv\rcomp\gr{f}\rcomp\gr{f}\rconv
  = \rid_X$, 
proving that $f$ is  \injective\RDoc. 
\end{proof}

The following corollary is straightforward. 

\begin{corollary}\label[cor]{cor:bij}
Let \fun{f}{X}{Y} be an isomorphism of \CC, then $f$ is \bijective\RDoc. 
\end{corollary}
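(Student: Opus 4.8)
The plan is to obtain the statement as an immediate consequence of \refItem{prop:arrows-iff}{bij}, which characterises the \bijective\RDoc arrows as precisely those $f$ for which the reindexing \fun{\RDoc\reidx{f,f}}{\RDoc(Y,Y)}{\RDoc(X,X)} is an isomorphism in \Pos. Since every functor carries isomorphisms to isomorphisms, it then suffices to exhibit $\ple{f,f}$ as an isomorphism in the base $\bop\CC$ of the functor $\RDoc$.

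First I would observe that if \fun{f}{X}{Y} is an isomorphism in \CC with inverse \fun{f^{-1}}{Y}{X}, then $\ple{f,f}$ is an isomorphism in the product category $\CC\times\CC$, with inverse $\ple{f^{-1},f^{-1}}$, and hence also in $\bop\CC$, since passing to the opposite category preserves isomorphisms. Applying the functor \fun{\RDoc}{\bop\CC}{\Pos} to this isomorphism yields that $\RDoc\reidx{f,f}$ is an isomorphism of posets. By \refItem{prop:arrows-iff}{bij} this is exactly the assertion that $f$ is \bijective\RDoc, completing the argument.

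There is essentially no genuine obstacle, in line with the statement being flagged as straightforward; the only point deserving a moment of care is the variance bookkeeping, namely confirming that the inverse of $\ple{f,f}$ is assembled from $f^{-1}$ and that this isomorphism survives the passage to $\bop\CC$.

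For completeness, I would note an alternative, purely relational derivation that avoids invoking \cref{prop:arrows-iff} altogether. Since graphs preserve composition and identities, from $f^{-1}\circ f = \id_X$ and $f\circ f^{-1} = \id_Y$ one gets $\gr{f}\rcomp\gr{f^{-1}} = \rid_X$ and $\gr{f^{-1}}\rcomp\gr{f} = \rid_Y$. As $\gr{f}$ is total and functional, a short computation using $\gr{f}\rconv\rcomp\gr{f}\order\rid_Y$ and $\rid_X\order\gr{f}\rcomp\gr{f}\rconv$ shows $\gr{f^{-1}} = \gr{f}\rconv$, whence $\gr{f}\rcomp\gr{f}\rconv = \rid_X$ and $\gr{f}\rconv\rcomp\gr{f} = \rid_Y$, which is precisely the definition of $f$ being \bijective\RDoc.
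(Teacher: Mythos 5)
Your main argument is exactly the paper's proof: since $f$ is an isomorphism in \CC, functoriality makes $\RDoc\reidx{f,f}$ an isomorphism in \Pos, and \refItem{prop:arrows-iff}{bij} then gives that $f$ is \bijective\RDoc. The supplementary purely relational derivation (showing $\gr{f^{-1}} = \gr{f}\rconv$ from totality and functionality of graphs) is also correct, but it is a bonus beyond what the paper does.
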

\begin{proof}
Immediate by \refItem{prop:arrows-iff}{bij}, as $\RDoc\reidx{f,f}$ is an isomorphism. 
\end{proof}

The converse of \cref{cor:bij} does not hold in general, 
unless, as we will see, the domain of the arrow satisfies the rule of unique choice. This shows also that the rule of unique choice can be seen as a form of balancedness of the base category.

A functional and total relation $\relr\in\RDoc(X,Y)$
has a \emph{tracking arrow} if there is \fun{f}{X}{Y} in \CC with $\gr{f}= \relr$.
Note that, by \cref{prop:tot-fun-discrete}, it suffices to require that $\gr{f} \order \relr$.

\begin{definition}\label[def]{def:ruc}
Let \fun{\RDoc}{\bop\CC}{\Pos} be a relational doctrine. 
An object $Y$ in $\CC$ satisfies the rule of unique choice, \RUC for short, if for every $X$ in $\CC$ every functional and total relation in $\RDoc(X,Y)$
has a tracking arrow. The object $Y$ satisfies the strong rule of unique choice, \SRUC for short, if for every $X$ in $\CC$ every functional and total relation in $\RDoc(X,Y)$
has a unique tracking arrow. A relational doctrine satisfies \RUC (respectively \SRUC) if every object of its base satisfies \RUC (respectively \SRUC).
\end{definition}

Functional and total relations are left adjoints, so objects  satisfying  \RUC enjoy a form of ``completeness'' that closely recalls the Cauchy-completeness, for this reason we will often call these objects \emph{Cauchy-complete}. Analogously an object $Y$ that satisfies \SRUC will be also called \emph{strongly Cauchy-complete}.

\begin{proposition}\label[prop]{prop:sruc-vs-ruc}
An object $Y$ strongly Cauchy-complete if and only if it is Cauchy-complete and extensional.
\end{proposition}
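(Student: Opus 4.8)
The plan is to prove both implications by directly unfolding \cref{def:ruc} and \cref{def:ext-eq}, leaning on the observation recorded just above the statement that every graph $\gr{f}$ is a functional and total relation. This single fact is the bridge between the two notions, and the whole argument is a definitional shuffle organised around it.

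For the forward implication I would assume $Y$ is strongly Cauchy-complete. Existence of tracking arrows is already part of \SRUC, so $Y$ is immediately Cauchy-complete. To obtain extensionality, I would take parallel arrows \fun{f,g}{X}{Y} with $f\exteq g$, that is $\gr{f}=\gr{g}$. Since $\gr{f}$ is functional and total, by \SRUC it admits a \emph{unique} tracking arrow; but both $f$ and $g$ are tracking arrows for the common relation $\gr{f}=\gr{g}$, whence $f=g$. This shows every object $Y$ satisfying \SRUC is extensional.

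For the converse, assume $Y$ is Cauchy-complete and extensional, and let $\relr\in\RDoc(X,Y)$ be functional and total. By \RUC there is a tracking arrow \fun{f}{X}{Y}, so existence holds, and it remains only to establish uniqueness. If \fun{g}{X}{Y} is another tracking arrow, then $\gr{f}=\relr=\gr{g}$, i.e. $f\exteq g$, and extensionality of $Y$ forces $f=g$. Hence the tracking arrow is unique and $Y$ is strongly Cauchy-complete.

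I do not expect a genuine obstacle here, since no relational calculus beyond the definitions is needed. The only point worth stating explicitly is that a graph is itself a functional and total relation: this is exactly what lets me feed $\gr{f}$ into \SRUC in the first direction, and what turns equality of graphs into the extensionality hypothesis in the second. If anything requires care, it is simply keeping the quantifier ``for every $X$'' in \cref{def:ruc} consistent across both directions, which is automatic since the witnesses $f,g$ share the same domain $X$.
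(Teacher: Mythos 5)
Your proof is correct and follows essentially the same route as the paper's: both directions hinge on the fact that graphs are functional and total relations, so that uniqueness of tracking arrows yields extensionality (since $f$ and $g$ both track $\gr{f}=\gr{g}$), and conversely extensionality yields uniqueness of tracking arrows. There is nothing to add.
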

\begin{proof} 
If $Y$ is strongly Cauchy-complete, then it is obviously Cauchy-complete. 
Moreover any arrow $\fun{f}{X}{Y}$ is the unique tracking arrow of its graph $\gr{f}$. 
Therefore, if $\gr{f}=\gr{g}$, then $g$ tracks $\gr{f}$, hence $f = g$, as needed. 
Conversely, suppose $Y$ is extensional and Cauchy-complete. 
Then, given a total and functional relation $\relr$ in $\RDoc(X,Y)$ there is $f$ such that $\gr{f} = \relr$ and, 
if $g$ is another arrow tracking $\relr$, we have 
$\gr{g}=\relr=\gr{f}$, hence, by extensionality we get $f = g$.
\end{proof}

The following proposition shows that when the (strong) rule of unique choice holds, the converse of \cref{cor:bij} holds as well. 

\begin{proposition}\label[prop]{prop:sruc-iso}
Let \fun{f}{X}{Y} be an arrow in \CC where $X$ and $Y$ are extensional objects. 
If $X$ is Cauchy-complete and $f$ is \bijective\RDoc then $f$ is an isomorphism. 
\end{proposition}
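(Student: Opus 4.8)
The plan is to build an explicit two-sided inverse for $f$ from the converse of its graph, using Cauchy-completeness of $X$ to realise that converse as an arrow and extensionality to pin down the two composites. First I would unpack \bijective\RDoc: by definition the graph satisfies $\gr{f}\rcomp\gr{f}\rconv = \rid_X$ and $\gr{f}\rconv\rcomp\gr{f} = \rid_Y$. Reading these through the definitions of functional and total for the relation $\gr{f}\rconv\in\RDoc(Y,X)$, and using the involution law $\relr\rdconv=\relr$, I would check that $\gr{f}\rconv$ is itself functional and total: the equality $(\gr{f}\rconv)\rconv\rcomp\gr{f}\rconv = \gr{f}\rcomp\gr{f}\rconv = \rid_X$ witnesses functionality, while $\gr{f}\rconv\rcomp(\gr{f}\rconv)\rconv = \gr{f}\rconv\rcomp\gr{f} = \rid_Y$ witnesses totality.

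Next, since $X$ is Cauchy-complete and $\gr{f}\rconv$ is a functional and total relation in $\RDoc(Y,X)$, I would invoke \RUC to obtain a tracking arrow $\fun{g}{Y}{X}$ with $\gr{g}=\gr{f}\rconv$; this $g$ is the candidate inverse. It then remains to identify the two composites. Using that graphs turn composition into relational composition ($\gr{g\circ f}=\gr{f}\rcomp\gr{g}$), I compute $\gr{g\circ f}=\gr{f}\rcomp\gr{f}\rconv=\rid_X=\gr{\id_X}$, so $g\circ f\exteq\id_X$, and symmetrically $\gr{f\circ g}=\gr{g}\rcomp\gr{f}=\gr{f}\rconv\rcomp\gr{f}=\rid_Y=\gr{\id_Y}$, so $f\circ g\exteq\id_Y$. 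Finally, extensionality of $X$ promotes $g\circ f\exteq\id_X$ to $g\circ f=\id_X$, and extensionality of $Y$ promotes $f\circ g\exteq\id_Y$ to $f\circ g=\id_Y$, exhibiting $g$ as a two-sided inverse and hence $f$ as an isomorphism.

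Since every step reduces to a one-line computation in the fibres, there is no serious obstacle; the only point demanding care is the bookkeeping, namely tracking which fibre each relation inhabits and composing graphs in the correct order (recall $\gr{g\circ f}=\gr{f}\rcomp\gr{g}$). It is also worth noting where each hypothesis is spent: Cauchy-completeness is used only for $X$, to produce $g$, whereas extensionality is needed for both $X$ and $Y$, one for each of the two composites.
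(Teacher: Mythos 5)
Your proof is correct and follows essentially the same route as the paper's: use Cauchy-completeness of $X$ to obtain a tracking arrow $g$ for the functional and total relation $\gr{f}\rconv$, compute $\gr{g\circ f}=\rid_X$ and $\gr{f\circ g}=\rid_Y$ via graphs, and invoke extensionality of $X$ and $Y$ to upgrade these to genuine equalities. The extra details you supply (verifying that $\gr{f}\rconv$ is functional and total, and noting exactly where each hypothesis is used) are accurate and consistent with the paper's conventions.
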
 
\begin{proof}
If $f$ is \bijective\RDoc, $\gr{f}\rconv$ is a functional and total relation in $\RDoc(Y,X)$ so there is a unique arrow \fun{g}{Y}{X} such that $\gr{g} = \gr{f}\rconv$. 
Then, we have 
$\gr{f\circ g} = \gr{f}\rconv\rcomp\gr{f} = \rid_Y$ and 
$\gr{g\circ f} = \gr{f}\rcomp\gr{f}\rconv = \rid_X$.  Extensionality of $X$ and $Y$ imply 
$f\circ g = \id_Y$ and $g\circ f = \id_X$. 
\end{proof}

\begin{example} \label[ex]{ex:ruc-doc} 
\begin{enumerate}
\item\label{ex:ruc-doc:vrel}
The relational doctrine \fun{\VRel\Qtl}{\bop\Set}{\Pos} of \refItem{ex:rel-doc}{vrel} need not satisfy \RUC. It is shown in \cite{HofmannST14} (proposition III.1.2.1)  that if the quantale $\Qtl$ is affine, \ie if $\qone$ is the top element, then 
$\VRel\Qtl$ satisfies \RUC if and only if the quantale is lean, \ie if $x\lor y=\top$ and $x\wedge y=\bot$ implies that $x=\top$ or $y=\top$.

\item\label{ex:ruc-doc:met}
The relational doctrine $\fun{\MetDoc}{\bop{\Met}}{\Pos}$ of \refItem{ex:rel-doc}{met} does not satisfy \RUC as a metric space is Cauchy-complete if and only if it is complete in the standard sense. 
The proof basically follows the original arguments given by Lawvere's in \cite{Lawvere73}. 
Rewritten in the language of relational doctrines, Lawvere's theorem says that a metric space $Y$ is complete if and only if for every $X$ and every $\relr$ in $\MetDoc(X,Y)$ which is a left adjoint, \ie such that there is $\rels$ in $\MetDoc(Y,X)$ with $\rels\rcomp\relr\le \dist_Y$ and $\dist_X\le \relr\rcomp\rels$,  there is a non-expansive map $\fun{f}{X}{Y}$ such that $\relr(x,y)=\dist_Y(f(x),y)$. 
The condition of Cauchy-completeness formulated in \cref{def:ruc} is a slightly weaker as it asks the right adjoint to be $\relr\rconv$ which is not always the case, in other words, not all left adjoint relations are functional and total, while all functional and total relations are left adjoints. 
Nevertheless Lawvere's argument can still be carried out. 
The only care to be taken is in proving the necessary condition. 
Suppose $Y$ is Cauchy-complete and take a Cauchy sequence $(y_n)_{n\in\N}$ in it. 
Lawvere defined a left adjoint relation $\relr$ in $\MetDoc(1,Y)$, \ie from the one-point space to $Y$, as 
$\relr(\star,y) = \lim\limits_{n\rightarrow\infty}\dist_Y(y_n,y)$, which is well defined since $(\dist_Y(y_n,y))_{n\in\N}$ is a Cauchy sequence in $[0,\infty]$. 
However, it is easy to see that $\relr$ is actually functional and total,  
hence there is $\fun{l}{1}{Y}$ such that $\relr(\star,y) = \lim\limits_{n\rightarrow\infty}\dist_Y(y_n,y) = \dist_Y(l(\star),y) = \gr{l}(\star,y)$. 
Taking $y = l(\star)$, we get $\lim\limits_{n\rightarrow\infty} \dist_Y(y_n,l(\star)) = \dist_Y(l(\star),l(\star)) = 0$, proving that 
$l(\star)$ is a limit of $(y_n)_{n\in\N}$. 

\item\label{ex:ruc-doc:vec}
The relational doctrine \fun{\SVecDoc}{\bop{\ct{SVec}}}{\Pos} of \refItem{ex:rel-doc}{vec} does not satisfies \RUC as a semi-normed vector space is Cauchy-complete if and only if it is complete as a metric space with the distance induced by the semi-norm.
The argument is essentially the same as Lawvere's one for metric spaces, with some differences due to the fact that we have to take into account the additional vector space structure, in particular we use the axiom of choice to ensure that every vector space has a basis.
For this reason we will carry out it in detail. 
Let us start by recalling a couple of easy properties of total and functional relations in $\SVecDoc$. 
If $\relr$ is a total and functional relation in $\SVecDoc(X,Y)$, the the following (in)equalities hold, for all $\vecx\in\Car{X}$ and $\vecy\in\Car{Y}$: 
$\relr(\veczero,\vecy) = \Norm[Y]{\vecy}$ and 
$\Norm[X]\vecx + \relr(\vecx,\vecy) \ge \Norm[Y]\vecy$. 

Let $X,Y$ be semi-normed vector spaces with $Y$ complete and $\relr$ a total and functional relation in $\SVecDoc(X,Y)$. 
Hence, for every $\vecx$ in $X$, we have $0 = \Norm[X]{\vecx-\vecx} \ge \inf_{\vecy\in \Car{Y}} \relr(\vecx,\vecy) + \relr\rconv(\vecy,\vecx)$, as $\relr$ is total, 
and this implies that $\inf_{\vecy\in\Car{Y}} \relr(\vecx,\vecy) = 0$. 
Therefore, for every $n\in\N$, there is $\vecy_{\vecx,n}\in\Car{Y}$ such that $1/n > \relr(\vecx,\vecy_{\vecx,n})$.
The sequence $(\vecy_{\vecx,n})_{n\in\N}$ is a Cauchy-sequence: 
for all $n\in\N$ and $h,k\ge 2n$, we have 
$1/n \ge 1/h + 1/k > \relr\rconv(\vecy_{\vecx,h},\vecx) + \relr(\vecx,\vecy_{\vecx,k}) \ge \Norm[Y]{\vecy_{\vecx,h} - \vecy_{\vecx,k}}$, 
as $\relr$ is functional. 
Let $\{ \vecx_i \mid i \in I \}$ be a basis for $X$ (whose existence relies on the axiom of choice)
and denote by $\vecy_i$ the limit of the sequence $(\vecy_{\vecx_i,n})_{n\in\N}$. 
We get a linear map \fun{f}{X}{Y}, which is the unique such that $f(\vecx_i) = \vecy_i$. 
To show that $f$ is indeed an arrow in \ct{SVec}, we have to prove that $\Norm[X]\vecx \ge \Norm[Y]{f(\vecx)}$, for all $\vecx\in\Car{X}$, but, 
thanks to properties of semi-norms, it suffices to check that 
$\Norm[X]{\vecx_i} \ge \Norm[Y]{f(\vecx_i)}$, for all $i\in I$. 
Since $\relr$ is total and functional, we have 
$\Norm[X]{\vecx_i} + \relr(\vecx_i,f(\vecx_i)) \ge \Norm[Y]{f(\vecx_i)}$, for all $i \in I$. 
Note that $\relr(\vecx_i,f(\vecx_i)) = \relr(\vecx_i,\vecy_i) =  0$, since  
$1/n + \Norm[Y]{\vecy_{\vecx_i,n} - \vecy_i} 
  > \relr(\vecx_i,\vecy_{\vecx_i,n}) + \Norm[Y]{\vecy_{\vecx_i,n} - \vecy_i} 
  \ge \relr(\vecx_i,\vecy_i) \ge 0$ and 
$\lim\limits_{n\rightarrow\infty} 1/n + \Norm[Y]{\vecy_{\vecx_i,n} - \vecy_i} = 0$, as $\lim\limits_{n\rightarrow\infty} \vecy_{\vecx_i,n} = \vecy_i$. 
Thus, we get $\Norm[X]{\vecx_i} \ge \Norm[Y]{f(\vecx_i)}$ for all $i \in I$, as needed. 
To conclude that $f$ tracks $\relr$, it suffices to show that 
$\Norm[Y]{f(\vecx_i) - \vecy_i} \ge \relr(\vecx_i,\vecy_i)$, for all $i\in I$, 
thanks to the fact that $\relr$ is a semi-norm on $X\times Y$. 
Since $\relr(\vecx_i,f(\vecx_i)) = 0$,  we deduce 
$\Norm[Y]{f(\vecx_i) - \vecy} 
  = \relr(\vecx_i,f(\vecx_i) + \Norm[Y]{f(\vecx_i) - \vecy} 
  \ge \relr(\vecx_i,\vecy)$, as needed. 

Conversely, let $Y$ be a Cauchy-complete vector space and let $(\vecy_n)_{n\in\N}$ be a Cauchy-sequence in $Y$. 
For every real number $a$ and $\vecy\in\Car{Y}$, the sequence 
$(\Norm[Y]{a\vecy_n - \vecy})_{n\in\N}$ is a Cauchy-sequence in $[0,\infty]$: 
this is an immediate consequence of the fact that $(\vecy_n)_{n\in\N}$ is a Cauchy-sequence in $Y$, observing that 
for all $h,k\in\N$, $\abs{a} \Norm[Y]{\vecy_h - \vecy_k} \ge \abs{\Norm[Y]{a\vecy_h - \vecy} - \Norm[Y]{a\vecy_k - \vecy}}$. 
Let $\R_\infty$ denote the vector sapce of real numbers with the (semi-)norm $\Norm[\R_\infty]{\blank}$ given by 
$\Norm[\R_\infty]{0} = 0$ and $\Norm[\R_\infty]{a} = \infty$ if $a\ne 0$. 
Then, we define a function \fun{\relr}{\Car{\R_\infty}\times\Car{Y}}{[0,\infty]} as follows: 
$\relr(a,\vecy) = \lim\limits_{n\rightarrow\infty} \Norm[Y]{a\vecy_n - \vecy}$. 
It is easy to see that $\relr$ is a relation in $\SVecDoc(\R_\infty,Y)$. 
It is total, observing that, since $(\vecy_n)_{n\in\N}$ is a Cauchy sequence, 
for every $\epsilon > 0$, there is $n_\epsilon\in\N$, such that, for all $k\ge n_\epsilon$, $\epsilon > \Norm[Y]{\vecy_{n_\epsilon} - \vecy_k}$, 
which implies $\abs{a} \epsilon \ge \lim\limits_{n\rightarrow\infty} \Norm[Y]{a\vecy_n-a\vecy_{n_\epsilon}} = \relr(a,a\vecy_{n_\epsilon})$. 
Finally, it is functional, noting that 
$\relr\rconv(\vecy,a) + \relr(a,\vecy') 
  =\lim\limits_{n\rightarrow\infty} \Norm[Y]{a\vecy_n - \vecy} + \Norm[Y]{a\vecy_n - \vecy'} 
  \ge \lim\limits_{n\rightarrow\infty} \Norm[Y]{\vecy-\vecy'} 
  = \Norm[Y]{\vecy-\vecy'}$. 
Since $Y$ is Cauchy-complete, we get an arrow \fun{f}{\R_\infty}{Y} such that, for all $a\in\Car{\R_\infty}$ and $\vecy\in\Car{Y}$, 
we have $\relr(a,\vecy) = \Norm[Y]{f(a) - \vecy}$. 
Therefore, taking $l = f(1)$, we get 
$\relr(1,l) = 0$, that is, 
$\lim\limits_{n\rightarrow\infty}\Norm[Y]{\vecy_n - l} = 0$, proving that $l$ is a limit of $(\vecy_n)_{n\in\N}$, as needed. 

Finally, recalling from \refItem{ex:extensional}{vec} that the extensional objects in $\SVecDoc$ are normed vector spaces, 
using \cref{prop:sruc-vs-ruc}, we get that 
an object $Y$ in \ct{SVec} is strongly Cauchy-complete if and only if it is a Banach space.

\item\label{ex:ruc-doc:walters}

The relational doctrine $\fun{\BiMod}{\bop{\BCatss{Rel(\ct{H})}}}{\Pos}$ introduced in \refItem{ex:rel-doc}{walters} is extensional (see \refItem{ex:extensional}{walters}). So strongly Cauchy-complete objects coincides with Cauchy-complete ones. Since composition of relations in $\BiMod$ is defined using meets and suprema, a bimodules $\phi$ is a left adjoint if and only if its right adjoint is $\phi\rconv$, so Cauchy-complete objects of $\BCatss{Rel(\ct{H})}$ are the Cauchy-complete symmetric and skeletal $Rel(\ct{H})$-categories as in \cite{W1}.

\item\label{ex:ruc-doc:topos}
Let $\ct{E}$ be an elementary topos, $j$ a topology over it and consider the functor $\fun{\SubDoc_j}{\bop{\ct{E}}}{\Pos}$ of \refItem{ex:rel-doc}{topos}. 
An object of $\ct{E}$ satisfies is strongly Cauchy-complete if and only if it is a $j$-sheaf. Indeed, suppose $Y$ is strongly Cauchy-complete (so $Y$ is extensional by \cref{prop:sruc-vs-ruc}, hence $j$-separated) and consider the $j$-dense arrow $\fun{\eta_Y}{Y}{s(Y)}$ to its associated $j$-sheaf $s(Y)$. The relation $\gr{\eta_Y}\rconv$ is functional and total, so it is the graph of an arrow which is necessarily the inverse of $\eta_Y$. To prove the converse take a functional and total $\alpha$ in $\SubDoc_j(X,Y)\subseteq \Sub(X,Y)$. If $Y$ is a $j$-sheaf, then $\alpha$ is functional an total also in $\Sub(X,Y)=\Sub(X\times Y)$. Since subobjects doctrines satisfies the \SRUC \cite{JacobsB:catltt}, there a unique is $\fun{f}{X}{Y}$ with $\alpha=(f\times\id_Y)^*\Delta_Y=(f\times\id_Y)^*j\Delta_Y=\gr{f}$ where $\gr{f}$ is computed with respect to the doctrine $\SubDoc_j$.

\item\label{ex:ruc-doc:kh}
Consider the doctrine \fun{\TopDoc}{\bop\Top}{\Pos} of \refItem{ex:rel-doc}{kh}. 
A topological space is strongly Cauchy-complete if and only if it is compact and Hausdorff. Indeed suppose $Y$ is compact and Hausdorff: a functional and total relation $F$ in $\TopDoc(X,Y)=\cl(\beta X\times \beta Y)$ is a continuous functions $\fun{F}{\beta X}{\beta Y}$, whose tracking arrow is given composing $F$ with $\fun{\eta_X}{X}{\beta X}$ and $\fun{\eta_Y^{-1}}{\beta Y}{Y}$ which is a homeomorphis as $Y$ is compact and Hausdorff. Conversely suppose $Y$ is strongly Cauchy-complete, and consider $\fun{\eta_Y}{Y}{\beta Y}$. It is $\gr{\eta_Y}\rcomp\gr{\eta_Y}\rconv = \{(x,z)\in \beta Y\times \beta Y\mid \exists_{y\in \beta^2 Y} \beta(\eta_Y)(x)=y\ \text{and}\ y=\beta(\eta_Y)(z)\}=\{(x,z)\mid \beta(\eta_Y)(x)=\beta(\eta_Y)(z)\} =\rid_Y$ so $\gr{\eta_Y}\rconv$ is functional. For totality consider $\gr{\eta_Y}\rconv\rcomp\gr{\eta_Y}=\{(u,v)\in \beta^2Y\times \beta^2Y\mid \exists_{w\in \beta Y} u=\beta(\eta_Y)(w)\ \text{and}\ \beta(\eta_Y)(w)=v \}$. Since $\fun{\beta(\eta_Y)}{\beta Y}{\beta^2Y}$ is an homeomorphis $\gr{\eta_Y}\rconv\rcomp\gr{\eta_Y}=\{(u,v)\mid u=v\}=\rid_{\beta Y}$, then $\gr{\eta_Y}\rconv$ in $\TopDoc(\beta Y, Y)$ has a unique tracking arrow $\fun{g}{\beta Y}{Y}$ which is the inverse of $\eta_Y$ making $Y$ compact-Hausdorff.
\end{enumerate}
\end{example}

\begin{remark}\label[rem]{rem:cauchy}
\cref{ex:ruc-doc} shows how the rule of unique choice in a relational doctrines is linked to the notion of Cauchy completeness for some enriched categories, such as metric spaces.
However, doctrines, being a proof-irrelevant setting, can only deal with simple examples of enriched categories. 
In order to cope with them in full generality, one should adopt a proof-relevant framework like framed bicategories (\cf \cref{rem:alternative-defs}). 
\end{remark}

The rest of this section is devoted to  defining 
constructions which produce relational doctrines satisfying \SRUC out of any relational doctrine. 
We focus on \SRUC rather than on \RUC as this allows us to show universal properties of the presented constructions
which would not hold if we considered just \RUC (see \cref{thm:doc-to-ruc}). 
Denote by $\RDtnRuc$ be the 1-full and 2-full subcategory of $\RDtn$ on relational doctrines that satisfy  \SRUC. 

Let $\RDoc$ be a relational doctrine over $\CC$. 
A first naive approach to build a relational doctrine satisfying \SRUC starting from $\RDoc$ is  
to consider the restriction $\complete{\RDoc}$ of $\RDoc$ to the full subcategory $\complete[\RDoc]{\CC}$ of \CC spanned by 
strongly Cauchy-complete objects, \ie, the composite 
\[\xymatrix{
\bop{\complete[\RDoc]{\CC}} \ar@{^(->}[r] & \bop\CC \ar[r]^-{\RDoc} & \Pos 
}\]
Clearly, $\complete{\RDoc}$ satisfies \SRUC (by \cref{prop:sruc-vs-ruc}) since the relational structure is exactly that of $\RDoc$. 
However simple, this construction has the major drawback of being not functorial. 
Indeed, 1-arrows $F$ in \RDtn do not need to preserve Cauchy-complete objects, essentially because $\fn{F}$ and $\lift{F}$ need not be full and componentwise surjective, respectively.  

In order to recover functoriality, we now define a different construction. 
First of all, we observe that 
if $\relr$ in $\RDoc(A,B)$ and $\rels$ in $\RDoc(B,C)$ are functional so is $\relr\rcomp\rels$ in $\RDoc(A,C)$ as $(\relr\rcomp\rels)\rconv \rcomp \relr\rcomp\rels= \rels\rconv\rcomp\relr\rconv \rcomp \relr\rcomp\rels\order  
\rels\rconv\rcomp\rid_B\rcomp\rels=  
\rels\rconv\rcomp\rels\order\rid_C$. Similarly, if $\relr$ and $\rels$ are total, so is $\relr\rcomp\rels$.
This defines a category $\Map\RDoc$ whose objects are those of $\CC$ and arrows from $A$ to $B$ are functional and total relations in $\RDoc(A,B)$.
Then, for every \fun{\relr}{X}{A} and \fun{\rels}{Y}{B} in $\Map\RDoc$, 
we let $\Ruc\RDoc(A,B) = \RDoc(A,B)$ and 
$\fun{{\Ruc\RDoc}\reidx{\relr,\rels}}{\Ruc\RDoc(A,B)}{\Ruc\RDoc(X,Y)}$ be the function $\relt \mapsto \relr\rcomp\relt\rcomp\rels\rconv$. 
These assignments determine a functor $\fun{\Ruc{\RDoc}}{\bop{\Map\RDoc}}{\Pos}$, 
which is also a relational doctrines where relational identities, composition and  converse are those of $\RDoc$.
Note that we can define a 1-arrow \oneAr{\GrAr[\RDoc]}{\RDoc}{\Ruc\RDoc} in \RDtn as depicted below 
\[
\xymatrix@C=7.5em@R=1em{
{\bop\CC}\ar[rd]^(.4){\RDoc}_(.4){}="P"
\ar[dd]_{\bop{\fn\GrAr[\RDoc]}}^{}="F"
&\\
 & {\ct{Pos}}\\
{\bop{\Map{\RDoc}}}\ar[ru]_(.4){\ \ \Ruc\RDoc}^(.4){}="R"&
\ar"P";"R"_{\lift\GrAr[\RDoc]\kern.5ex\cdot\kern-.5ex}="b"
}
\]
where $\fun{\fn\GrAr[\RDoc]}{\CC}{\Map\RDoc}$ is the identity on objects and it maps an arrow $f$ to its graph $\gr{f}$, 
and \nt{\lift\GrAr[\RDoc]}{\RDoc}{\Ruc\RDoc\bop{\fn\GrAr[\RDoc]}} is componentwise the identity. 
We will often omit the subscript from $\Ruc\RDoc$ when it is clear from the context. 

\begin{proposition}\label[prop]{prop:sruc}
For a relational doctrine \fun{\RDoc}{\bop\CC}{\Pos} the following are equivalent:
\begin{enumerate} 
\item $\RDoc$ satisfies \SRUC 
\item \oneAr{\GrAr[\RDoc]}{\RDoc}{\Ruc\RDoc} is an isomorphism in \RDtn 
\item \oneAr{\GrAr[\RDoc]}{\RDoc}{\Ruc\RDoc} is a section in \RDtn 
\end{enumerate} 
\end{proposition}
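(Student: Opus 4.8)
The plan is to establish the cycle $(1)\Rightarrow(2)\Rightarrow(3)\Rightarrow(1)$. The implication $(2)\Rightarrow(3)$ is immediate, since every isomorphism is in particular a section, split by its own inverse.

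For $(1)\Rightarrow(2)$ I would first observe that the natural transformation $\lift{\GrAr[\RDoc]}$ is componentwise the identity, hence a fibrewise isomorphism; therefore $\GrAr[\RDoc]$ is invertible in $\RDtn$ as soon as its underlying functor $\fun{\fn{\GrAr[\RDoc]}}{\CC}{\Map\RDoc}$ is an isomorphism of categories. As $\fn{\GrAr[\RDoc]}$ is the identity on objects, this amounts to showing that for every $A,B$ the map $f\mapsto\gr{f}$ is a bijection $\Hom{\CC}{A}{B}\to\Map\RDoc(A,B)$ onto the functional and total relations of $\RDoc(A,B)$. Surjectivity of this map is exactly the existence of tracking arrows and injectivity is exactly extensionality, so together they are $\SRUC$. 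The inverse 1-arrow is then $\fn{\GrAr[\RDoc]}^{-1}$ paired with the identity transformation; that this is a genuine 1-arrow reduces to the identity $\RDoc\reidx{f,g}(\relt)=\gr{f}\rcomp\relt\rcomp\gr{g}\rconv$, which is precisely the naturality already used to define $\GrAr[\RDoc]$.

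The substantial direction is $(3)\Rightarrow(1)$, where I may assume only a retraction $H$ with $H\circ\GrAr[\RDoc]=\id_\RDoc$, not a two-sided inverse. The key observation is that the transformation component of $H$ is forced to be componentwise the identity: since $\fn{\GrAr[\RDoc]}$ is the identity on objects and $\lift{\GrAr[\RDoc]}$ is the identity, the transformation of $H\circ\GrAr[\RDoc]$ is just $\lift H$, and this must coincide with the identity transformation of $\id_\RDoc$. It follows as well that $\fn H$ is the identity on objects and sends $\gr{f}$ to $f$. Now let $\relr$ be functional and total in $\RDoc(A,B)$, regarded as an arrow $\fun{\relr}{A}{B}$ of $\Map\RDoc$; its graph computed inside $\Ruc\RDoc$ is $\relr$ itself, because the identity of $B$ in $\Map\RDoc$ is $\rid_B$ and $\Ruc\RDoc\reidx{\relr,\rid_B}(\rid_B)=\relr\rcomp\rid_B\rcomp\rid_B\rconv=\relr$. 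Since 1-arrows preserve graphs and $\lift H$ is the identity, $\gr{\fn H\relr}=\lift H_{A,B}(\relr)=\relr$, so $\fn H(\relr)$ is a tracking arrow for $\relr$; this gives $\RUC$. Extensionality follows because $\gr{f}=\gr{g}$ gives $\fn{\GrAr[\RDoc]}f=\fn{\GrAr[\RDoc]}g$, and applying $\fn H$ yields $f=g$. By \cref{prop:sruc-vs-ruc}, $\RUC$ together with extensionality at every object is exactly $\SRUC$, closing the cycle.

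I expect the step $(3)\Rightarrow(1)$ to be the main obstacle: from a one-sided inverse alone one must manufacture tracking arrows, and the decisive leverage is the combination of graph preservation by $H$ with the fact that splitting $\GrAr[\RDoc]$ forces $\lift H$ to be fibrewise the identity. Once this is in place, both the extraction of $\fn H(\relr)$ and the verification $\gr{\fn H\relr}=\relr$ are short computations; the remaining checks, such as assembling $\fn{\GrAr[\RDoc]}^{-1}$ into a 1-arrow in the first direction, are routine.
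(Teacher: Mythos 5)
Your proof is correct and takes essentially the same route as the paper's: $(1)\Rightarrow(2)$ by observing that $\fn{\GrAr[\RDoc]}$ is identity on objects and bijective on hom-sets (the paper phrases this as ``fully faithful''), $(2)\Rightarrow(3)$ trivially, and $(3)\Rightarrow(1)$ by showing the retraction is forced to be identity on objects with fibrewise-identity transformation, so that graph preservation together with $\gr{\relr}=\relr$ in $\Ruc\RDoc$ produces tracking arrows. Your explicit closing step — deriving extensionality from the retraction and invoking \cref{prop:sruc-vs-ruc} to upgrade \RUC to \SRUC — is a detail the paper leaves implicit, and it is a welcome addition.
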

\begin{proof}
It suffices to reason on $\fn\GrAr$ as $\lift\GrAr$ is always componentwise an identity. 
$(1)\Rightarrow(2)$. 
If $\RDoc$ satisfies \SRUC, then $\fn\GrAr$ is fully faithful hence, as it is the identity on objects, it is an isomorphism. 
$(2)\Rightarrow(3)$. It is trivial. 
$(3)\Rightarrow(1)$. 
Let \oneAr{F}{\Ruc\RDoc}{\RDoc} be the retraction of $\GrAr[\RDoc]$ and $\relr$ a total  and functional relation in  $\RDoc(X,Y)$. 
Then, \fun{\relr}{X}{Y} is an arrow in $\Map\RDoc$ and so \fun{f = \fn{F}\relr}{\fn{F}X}{\fn{F}Y} is an arrow in \CC. 
Since $\lift{F}$ preserves graphs and $\gr{\relr} = \relr$, we get 
$\lift{F}_{X,Y}(\relr) = \lift{F}_{X,Y}(\gr{\relr}) = \gr{f}$. 
From $F\circ\GrAr[\RDoc] = \Id_\RDoc$ and $\fn{\GrAr[\RDoc]}$ is the identity on objects and $\lift{\GrAr[\RDoc]}$ is componentwise the identity, we get  that 
$\fn{F}$ is the identity on objects and $\lift{F}$ is componentwise the identity. 
Hence, we conclude $\relr = \lift{F}_{X,Y}(\relr) = \gr{f}$, proving that $f$ tracks $\relr$ and so $\RDoc$ satisfies \SRUC. 
\end{proof}
An immediate consequence of the  proposition above is that $\Ruc\RDoc$ satisfies \SRUC, since, due to the idempotency of the construction, 
$\GrAr[\Ruc\RDoc]$ is an identity. 

The following proposition shows that the construction of $\Ruc\RDoc$ is universal, 
proving that it determines a left 2-adjoint of the inclusion $\RDtnRuc \hookrightarrow \RDtn$. 

\begin{theorem}\label[thm]{thm:doc-to-ruc} 
Let \fun\RDoc{\bop\CC}{\Pos} be a relational doctrine and 
\fun\SDoc{\bop\D}{\Pos} a relational doctrine satisfying \SRUC. 
The functor 
\[
\fun{\blank \circ \GrAr[\RDoc]}{\RDtnRuc(\Ruc\RDoc,\SDoc)}{\RDtn(\RDoc,\SDoc)} 
\]
is an isomorphism of categories. 
\end{theorem}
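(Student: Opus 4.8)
The plan is to establish this as a 2-adjunction by showing the hom-functor given by precomposition with $\GrAr[\RDoc]$ is an isomorphism of categories. I need to verify it's a bijection on objects (1-arrows) and on morphisms (2-arrows). The universal property I'm proving says: every 1-arrow from $\RDoc$ into an \SRUC-satisfying doctrine $\SDoc$ factors uniquely through $\GrAr[\RDoc]$.

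Let me think about the key construction. Given $F: \RDoc \to \SDoc$ where $\SDoc$ satisfies \SRUC, I need to construct a unique $\tilde{F}: \Ruc\RDoc \to \SDoc$ with $\tilde{F} \circ \GrAr[\RDoc] = F$. The base functor $\hat{\tilde{F}}: \Map\RDoc \to \D$ must agree with $\hat{F}$ on objects (since $\hat{\GrAr}$ is identity on objects). On arrows of $\Map\RDoc$, which are functional and total relations $\alpha \in \RDoc(A,B)$, I must send $\alpha$ to... here's where \SRUC of $\SDoc$ is crucial: $\bar{F}_{A,B}(\alpha)$ is functional and total in $\SDoc$ (since $\bar F$ preserves the relational structure), so by \SRUC it has a unique tracking arrow in $\D$. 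This forces the definition.

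The key steps, in order:
- Construct the base functor $\hat{\tilde{F}}$: on objects equal to $\hat{F}$; on an arrow $\alpha: A \to B$ in $\Map\RDoc$, define $\hat{\tilde{F}}(\alpha)$ to be the unique tracking arrow of $\bar{F}_{A,B}(\alpha)$, which exists and is unique by \SRUC of $\SDoc$.
- Check functoriality: preservation of identities uses that $\bar F$ preserves $\rid$ and that graphs of identities are $\rid$; preservation of composition uses that $\bar F$ preserves relational composition together with $\gr{g \circ f} = \gr f \rcomp \gr g$ and uniqueness of tracking arrows (Proposition on discreteness of total functional relations).
- Define the transformation $\overline{\tilde{F}}$: since $\Ruc\RDoc(A,B) = \RDoc(A,B)$ on the nose, set $\overline{\tilde{F}}_{A,B} = \bar{F}_{A,B}$, and verify naturality with respect to the new reindexing $\Ruc\RDoc\reidx{\alpha,\beta}(\gamma) = \alpha \rcomp \gamma \rcomp \beta\rconv$, which follows since $\bar F$ preserves composition and converse and since the graph of $\hat{\tilde F}(\alpha)$ equals $\bar F(\alpha)$.
- Verify $\tilde F$ preserves relational identities, composition, converse — immediate since $\overline{\tilde F} = \bar F$.
- Uniqueness: any $\tilde F$ with $\tilde F \circ \GrAr = F$ must have $\overline{\tilde F} = \bar F$ (because $\lift\GrAr$ is componentwise identity) and its base must send $\alpha$ to a tracking arrow of $\bar F(\alpha)$, which \SRUC makes unique. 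This gives bijectivity on 1-arrows.
- Finally handle 2-arrows: show precomposition with $\GrAr$ is a bijection on 2-cells, again using that the fibre transformations are literally unchanged and that a natural transformation $\theta$ between base functors into $\D$ is determined by its components, with the 2-cell inequality transferring verbatim.

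I expect the main obstacle to be the functoriality of the base functor $\hat{\tilde F}$ and, relatedly, the naturality of $\overline{\tilde F}$: both hinge on the interplay between tracking arrows and relational composition, where I must carefully use that $\bar F$ preserves $\rcomp$ together with the contravariant behaviour $\gr{g\circ f} = \gr f \rcomp \gr g$, and then invoke uniqueness of tracking arrows (which is exactly where \SRUC of $\SDoc$, rather than mere \RUC, is needed) to conclude equalities of arrows in $\D$ from equalities of their graphs in $\SDoc$. This is precisely why the construction would fail for \RUC alone, matching the remark in the text.
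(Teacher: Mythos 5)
Your treatment of 1-arrows is exactly the paper's argument and is correct: define $\fn{F'}$ to agree with $\fn{F}$ on objects and to send each arrow $\relr$ of $\Map\RDoc$ to the unique tracking arrow of $\lift{F}_{A,B}(\relr)$ supplied by \SRUC, keep the fibre maps unchanged, verify functoriality and $F'\circ\GrAr[\RDoc]=F$ by comparing graphs, and get uniqueness from graph preservation plus \SRUC. Your diagnosis of where \SRUC (rather than \RUC) is indispensable also matches the paper's remark.

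The gap is in your final step, on 2-cells. Given \oneAr{F,G}{\Ruc\RDoc}{\SDoc} and a 2-arrow \twoAr{\theta}{F\circ\GrAr[\RDoc]}{G\circ\GrAr[\RDoc]}, setting $\theta'_X = \theta_X$ is indeed forced, and the lax 2-cell inequality does transfer verbatim, since the fibres and the fibre maps of $\Ruc\RDoc$ are literally those of $\RDoc$. What does \emph{not} transfer verbatim is naturality of $\theta'$: the hypothesis only makes $\theta$ natural with respect to arrows of \CC (equivalently, with respect to graphs), whereas $\theta'$ must be natural with respect to \emph{all} arrows of $\Map\RDoc$, i.e.\ all total and functional relations, and these naturality squares are genuinely new conditions --- this is the only non-trivial content of fully faithfulness. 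They are established as follows: for \fun{\relr}{X}{Y} in $\Map\RDoc$, the 2-cell inequality applied to $\relr$, together with graph preservation (in $\Ruc\RDoc$ one has $\gr{\relr}=\relr$, so $\lift{F}_{X,Y}(\relr)=\gr{\fn{F}\relr}$ and likewise for $G$), yields
$\gr{\theta_Y\circ\fn{F}\relr} = \gr{\fn{F}\relr}\rcomp\gr{\theta_Y} \order \gr{\theta_X}\rcomp\gr{\fn{G}\relr} = \gr{\fn{G}\relr\circ\theta_X}$;
both sides are total and functional, so \cref{prop:tot-fun-discrete} upgrades the inequality to an equality of graphs, and \SRUC (via extensionality, \cref{prop:sruc-vs-ruc}) upgrades that to the equality of arrows $\theta_Y\circ\fn{F}\relr = \fn{G}\relr\circ\theta_X$ in \D. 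This is the same graphs-plus-discreteness-plus-\SRUC pattern you correctly deployed for functoriality of the base functor; it must be invoked once more here rather than dismissed as automatic.
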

\begin{proof} 
First of all, we prove that it is an isomorphism on objects. 
Let $\oneAr{F}{\RDoc}{\SDoc}$ be a 1-arrow in \RDtn.  
Every functional and total relation $\relr$ in $\RDoc(A,B)$ determines a functional and total relation $\lift{F}(\relr)$ in $\SDoc(\fn{F}A,\fn{F}B)$. 
By \SRUC, we get that $\lift{F}(\relr)=\gr{e_\relr}$ for a unique tracking arrow $\fun{e_\relr}{\fn{F}A}{\fn{F}B}$. 
Define a 1-arrow \oneAr{F'}{\Ruc\RDoc}{\SDoc} as follows: 
\begin{itemize}
\item the functor \fun{\fn{F'}}{\Map\RDoc}{\D} is given by $\fn{F'}A = \fn{F}A$ and $\fn{F}\relr = e_\relr$; 
\item the natural transformation \nt{\lift{F'}}{\Ruc\RDoc}{\SDoc\bop{\fn{F'}}} is given by 
$\lift{F'}_{A,B}(\relt) = \lift{F}_{A,B}(\relt)$. 
\end{itemize}
It is easy to check that $F'$ preserves relational identities, composition and converse, and, moreover it satisfies 
$F' \circ \GrAr[\RDoc] = F$. 
The only non-trivial commutativity is at the level of functors between bases. 
Take $\fun{f}{A}{B}$ in \CC, its value under $\fn{F'\circ\GrAr[\RDoc]}$ is $\fun{e_{\gr{f}}}{\fn{F}A}{\fn{F}B}$. 
Omitting the indexes in the  natural transformation $\lift{F}$ and recalling that 1-arrows preserve graphs, we deduce 
$\gr{e_{\gr{f}}} = \lift{F}(\gr{f}) = \gr{\fn{F}f}$ and so, by \SRUC, 
we conclude have $e_{\gr{f}}=\fn{F}(f)$, as needed. 
Towards a proof that $F'$ is the unique 1-arrow such that $F'\circ\GrAr[\RDoc] = F$, 
consider $\oneAr{G}{\Ruc{\RDoc}}{\SDoc}$ is such that $G\circ\GrAr[\RDoc] = F$. 
Then, since $G$ preserves graphs, this equation implies that that  the graph of $\fn{G}(\relr)$, for \fun\relr{A}{B} in $\Map\RDoc$, 
is equal to $\lift{F}(\alpha)$; hence, 
by \SRUC, we deduce $\fn{G}(\relr) = e_\relr = \fn{F'}(\relr)$, showing that $\fn{F'} = \fn{G}$. 
The equality $\lift{G} = \lift{F'}$ is straightforward, hence we conclude $G = F'$, as needed. 

To conclude the proof, we just need to show tht $\blank\circ\GrAr[\RDoc]$ is fully faithful. 
Let $\oneAr{F,G}{\Ruc\RDoc}{\SDoc}$ be 1-arrows in $\RDtnRuc$ and 
\twoAr{\theta}{F\circ\GrAr[\RDoc]}{G\circ\GrAr[\RDoc]} a 2-arrow in \RDtn. 
We define a 2-arrow \twoAr{\theta'}{F}{G} in $\RDtnRuc$  as $\theta'_X = \theta_X$. 
In order to show that $\theta'$ is a well-defined 2-arow, 
we only check it is a natural transformation, the other condition being immediate. 
Consider an arrow \fun{\relr}{X}{Y} in $\Map\RDoc$ and observe that 
$\gr{\theta'_Y\circ\fn{F}\relr} 
  = \gr{\fn{F}\relr} \rcomp \gr{\theta_Y} 
  \order \gr{\theta_X} \rcomp \gr{\fn{G}\relr} 
  = \gr{\fn{G}\relr \circ \theta'_X}$, 
which,using \cref{prop:tot-fun-discrete} and \SRUC, implies the equality 
$\theta'_Y\circ\fn{F}\relr = \fn{G}\relr\circ\theta'_X$. 
Finally, we have $\theta'\GrAr[\RDoc] = \theta$ and clearly it is unique with this property, thus proving the thesis. 
\end{proof}

\begin{remark}
Note that working with \SRUC instead of \RUC is crucial in the proof of \cref{thm:doc-to-ruc}. 
Indeed, without \SRUC, we would not be able to construct the 1-arrow $F'$ which factorises $F$ along $\GrAr[\RDoc]$, as it is defined by taking the tracking arrow of a certain functional and total relation. Without \SRUC tracking arrows need not be unique, and even with choice, one could not prove, for instance, that the chosen ones preserve compositions and identities in the base category. 
\end{remark}

\cref{thm:doc-to-ruc} determines a 2-adjuntion 
\[\xymatrix@R=7ex@C=7ex{
\RDtnRuc  \ar@{_(->}@/^8pt/[rr] 
          \ar@{}[rr]|{\top} && 
\RDtn     \ar@/^8pt/[ll]^-{\RtoRuc}
}\]
where $\RtoRuc(\RDoc) = \Ruc\RDoc$, 
$\fn{\RtoRuc(F)}X = \fn{F}X$, $\fn{\RtoRuc(F)}\relr = \lift{F}\relr$ and 
$\lift{\RtoRuc(F)}_{X,Y} = \lift{F}_{X,Y}$, and 
$\RtoRuc(\theta)_X = \gr{\theta_X}$. 
This 2-adjunction determines an idempotent 2-monad on \RDtn whose 2-category of algebras is isomorphic to $\RDtnRuc$, 
thanks to  \cref{prop:sruc}. 
Indeed, every algebra structure on a relational doctrine $\RDoc$, having $\GrAr[\RDoc]$ as section, 
ensures that $\RDoc$ satisfies \SRUC. 
Furhtermore, such algebras are necessarily inverses of $\GrAr[\RDoc]$, hence uniquely determined. 
This shows that satisfying \SRUC is a property rather than a structure. 

\begin{remark} 
Most categorical models for the calculus of relations, such as allegories \cite{FreydS90} or (locally posetal) cartesian bicategories \cite{CarboniW87}, 
are some kind of  \emph{ordered category with involution} \cite{Lambek99}. 
These are \Pos-enriched categories \CC together with a \Pos-enriched dagger, \ie, an identity on objects and self inverse \Pos-functor \fun{(\blank)\rconv}{\CC\op}{\CC}. 
In \cite{DagninoP23} it is shown that the 2-category of 
ordered categories with involution, 
dagger preserving \Pos-enriched functors and lax natural transformations is equivalent to 
the 2-category $\RDtnRuc$ of relational doctrines satisfying \SRUC. 
Therefore, from \cref{thm:doc-to-ruc} we also derive that ordered categories with involution arise as  algebras for the idempotent 2-monad on relational doctrines mentioned above.
\end{remark}


\section{Completions and Singletons}
\label{sect:singletons}

In the previous section, we have seen two ways of constructing a relational doctrine satisfying \SRUC starting from an arbitrary relational doctrine $\RDoc$: 
by restricting to strongly Cauchy-complete objects ($\complete{\RDoc}$) or by replacing arrows with functional and total relations ($\Ruc\RDoc$). 
A natural question that arises at this point is when these two are equivalent. 
In this section, we will show that $\complete\RDoc$ and $\Ruc\RDoc$ are equivalent in the 2-category \RDtn exactly when 
one of the following equivalent conditions holds:
\begin{itemize}
\item $\RDoc$ has a \emph{Cauchy-completion}, \ie, a universal construction turning any object into a strongly Cauchy-complete one; 
\item $\RDoc$ has \emph{singleton objects}, \ie, objects classifying functional and total relations. 
\end{itemize}
Both of these structures are described in terms of certain adjunctions in \RDtn, 
hence, we start by reviewing some of their properties. 

An adjunction in \RDtn (see \eg, \cite{DagninoR21}) consists of the following data: 
two 1-arrows \oneAr{F}{\RDoc}{\SDoc} and \oneAr{\SDoc}{\RDoc} and 
two 2-arrows \twoAr{\un}{\Id_\RDoc}{GF} and \twoAr{\cun}{FG}{\Id_\SDoc}, 
satisfying the usual identities 
$(\cun F)(F\un) = \id_F$ and $(G\cun)(\un G) = \id_G$. 
In other words, this means that we have 
\begin{itemize}
\item an adjunction \ple{\fn{F},\fn{G},\un,\cun} in \Ct{Cat} and 
\item for every $\relr\in\RDoc(X,Y)$ and $\rels\in\SDoc(A,B)$, 
$\relr \order \RDoc\reidx{\un_X,\un_Y}(\lift{G}_{\fn{F}X,\fn{F}Y}(\lift{F}_{X,Y}(\relr)))$ and 
$\lift{F}_{\fn{G}A,\fn{G}B}(\lift{G}_{A,B}(\rels)) \order \SDoc\reidx{\cun_A,\cun_B}(\rels)$. 
\end{itemize}

A 1-arrow \oneAr{F}{\RDoc}{\SDoc} in \RDtn is a \emph{change-of-base} if 
\nt{\lift{F}}{\RDoc}{\SDoc\bop{\fn{F}}} is a natural isomorphism. 
Intuitively, a change-of-base is a 1-arrow that acts only on base categories, 
leaving posets of relations unchanged. 
For example, the universal 1-arrow \oneAr{\GrAr[\RDoc]}{\RDoc}{\Ruc\RDoc} is a change-of-base. 

Adjunctions that involve a change-of-base 1-arrow have  special properties as the following proposition shows. 

\begin{proposition}\label[prop]{prop:cb-adj}
Let \oneAr{F}{\RDoc}{\SDoc} be a change-of-base and 
\oneAr{G}{\SDoc}{\RDoc} a right adjoint of $F$ in \RDtn. 
Then, the following hold: 
\begin{enumerate}
\item\label{prop:cb-adj:1}
for all objects $A,B$ in the base of $\SDoc$, we have 
$\lift{G}_{A,B} = \lift{F}^{-1}_{\fn{G}A,\fn{G}B} \circ \SDoc\reidx{\cun_A,\cun_B}$
\item\label{prop:cb-adj:2}
$G$ is a change-of-base if and only if each component of $\cun$ is \bijective\SDoc
\end{enumerate}
\end{proposition}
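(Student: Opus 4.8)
The plan is to prove \refItem{prop:cb-adj}{1} first, by a double inequality extracted from the unit and counit laws of the adjunction, and then to read off \refItem{prop:cb-adj}{2} almost immediately from \cref{prop:arrows,prop:arrows-iff}. Throughout I would work fibrewise and replace, whenever convenient, reindexings by the graph composites given in the \emph{Graphs} paragraph; but here the cleanest route is to keep reindexings and exploit their functoriality together with the naturality of $\lift{G}$.

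For \refItem{prop:cb-adj}{1}, since $F$ is a change-of-base each $\lift{F}_{X,Y}$ is an isomorphism in \Pos, so $\lift{F}^{-1}_{\fn{G}A,\fn{G}B}$ is a well-defined monotone map. One inequality is cheap: applying the monotone map $\lift{F}^{-1}_{\fn{G}A,\fn{G}B}$ to the counit law $\lift{F}_{\fn{G}A,\fn{G}B}(\lift{G}_{A,B}(\rels))\order\SDoc\reidx{\cun_A,\cun_B}(\rels)$ yields $\lift{G}_{A,B}(\rels)\order\lift{F}^{-1}_{\fn{G}A,\fn{G}B}(\SDoc\reidx{\cun_A,\cun_B}(\rels))$. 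For the reverse inequality I would instantiate the unit law at the element $\relr=\lift{F}^{-1}_{\fn{G}A,\fn{G}B}(\SDoc\reidx{\cun_A,\cun_B}(\rels))$ of the fibre $\RDoc(\fn{G}A,\fn{G}B)$, noting that then $\lift{F}_{\fn{G}A,\fn{G}B}(\relr)=\SDoc\reidx{\cun_A,\cun_B}(\rels)$, so the unit law reads $\relr\order\RDoc\reidx{\un_{\fn{G}A},\un_{\fn{G}B}}(\lift{G}_{\fn{F}\fn{G}A,\fn{F}\fn{G}B}(\SDoc\reidx{\cun_A,\cun_B}(\rels)))$.

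The right-hand side then collapses in two moves. First, naturality of $\lift{G}$ applied to the counit components $\cun_A,\cun_B$ rewrites $\lift{G}_{\fn{F}\fn{G}A,\fn{F}\fn{G}B}(\SDoc\reidx{\cun_A,\cun_B}(\rels))$ as $\RDoc\reidx{\fn{G}\cun_A,\fn{G}\cun_B}(\lift{G}_{A,B}(\rels))$. Second, functoriality of $\RDoc$ merges the two reindexings into $\RDoc\reidx{\fn{G}\cun_A\circ\un_{\fn{G}A},\,\fn{G}\cun_B\circ\un_{\fn{G}B}}$, which is the identity by the triangle identity $\fn{G}\cun_A\circ\un_{\fn{G}A}=\id_{\fn{G}A}$. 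Hence $\relr\order\lift{G}_{A,B}(\rels)$, and combining with the first inequality gives the stated equality $\lift{G}_{A,B}=\lift{F}^{-1}_{\fn{G}A,\fn{G}B}\circ\SDoc\reidx{\cun_A,\cun_B}$.

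For \refItem{prop:cb-adj}{2}, the formula from \refItem{prop:cb-adj}{1}, together with the fact that $\lift{F}^{-1}_{\fn{G}A,\fn{G}B}$ is an isomorphism, shows that $\lift{G}_{A,B}$ is an isomorphism if and only if $\SDoc\reidx{\cun_A,\cun_B}$ is. If $G$ is a change-of-base, then every $\SDoc\reidx{\cun_A,\cun_B}$ is an isomorphism; taking $B=A$ makes $\SDoc\reidx{\cun_A,\cun_A}$ an isomorphism, so $\cun_A$ is \bijective{\SDoc} by \refItem{prop:arrows-iff}{bij}. Conversely, if each $\cun_A$ is \bijective{\SDoc}, then \refItem{prop:arrows}{bij} gives that $\SDoc\reidx{\cun_A,\cun_B}$ is an isomorphism for all $A,B$, hence each $\lift{G}_{A,B}$ is, i.e.\ $G$ is a change-of-base. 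I expect the only delicate point to be the bookkeeping in the converse inequality of \refItem{prop:cb-adj}{1}: matching indices correctly when applying naturality of $\lift{G}$ to the counit and then invoking the right triangle identity against the contravariant functoriality of $\RDoc$. Once that equality is in hand, part \refItem{prop:cb-adj}{2} is essentially immediate.
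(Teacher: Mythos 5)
Your proposal is correct and follows essentially the same route as the paper's proof: the cheap inequality comes from the counit 2-arrow law, the reverse one from instantiating the unit 2-arrow law and collapsing via naturality of $\lift{G}$, functoriality of $\RDoc$, and the triangle identity, with part (2) then read off from \cref{prop:arrows,prop:arrows-iff}. The only difference is cosmetic: you argue elementwise at a chosen $\rels$, while the paper writes the same chain as an inequality of composite monotone maps.
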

\begin{proof}
\cref{prop:cb-adj:1}. 
Let $A,B$ be objects in the base of $\SDoc$. 
Since \twoAr\cun{FG}{\Id_\SDoc} is a 2-arrow in \RDtn, we have 
$\lift{F}_{\fn{G}A,\fn{G}B} \circ \lift{G}_{A,B} \order \SDoc\reidx{\cun_A,\cun_B}$, which is equivalent to 
$\lift{G}_{A,B} \order \lift{F}^{-1}_{\fn{G}A,\fn{G}B} \circ \SDoc\reidx{\cun_A,\cun_B}$. 
Hence, it suffices to show the opposite inequality. 
We have 
\begin{align*}
\lift{F}^{-1}_{\fn{G}A,\fn{G}B}\circ \SDoc\reidx{\cun_A,\cun_B} 
  &\order \RDoc\reidx{\un_{\fn{G}A},\un_{\fn{G}B}} \circ \lift{G}_{\fn{F}\fn{G}A,\fn{F}\fn{G}B} \circ \lift{F}_{\fn{G}A,\fn{G}B} \circ 
        \lift{F}^{-1}_{\fn{G}A,\fn{G}B} \circ \SDoc\reidx{\cun_A,\cun_B} 
    &&\text{$\un$ is a 2-arrow} 
  \\
  &= \RDoc\reidx{\un_{\fn{G}A},\un_{\fn{G}B}} \circ \lift{G}_{\fn{F}\fn{G}A,\fn{F}\fn{G}B} \circ \SDoc\reidx{\cun_A,\cun_B} \\
  &= \RDoc\reidx{\un_{\fn{G}A},\un_{\fn{G}B}} \circ \RDoc\reidx{\fn{G}\cun_A,\fn{G}\cun_B} \circ \lift{G}_{A,B}  
    &&\text{$\lift{G}$ is natural} 
  \\
  &= \RDoc\reidx{(\fn{G}\cun_A)\un_{\fn{G}A}, (\fn{G}\cun_B)\un_{\fn{G}B}} \circ \lift{G}_{A,B} 
    &&\text{$\RDoc$ is a functor}
  \\ 
  &= \lift{G}_{A,B}  
    &&\text{triangle identities} 
\end{align*}

\cref{prop:cb-adj:2}. 
If $G$ is a change-of-base, then $\lift{G}$ is a natural isomorphism, hence we get 
$\SDoc\reidx{\cun_A,\cun_A} = \lift{F}_{\fn{G}A,\fn{G}A}\circ\lift{G}_{A,A}$. 
This implies that $\SDoc\reidx{\cun_A,\cun_A}$ is an isomorphism and so $\cun_A$ is \bijective\SDoc by \refItem{prop:arrows-iff}{bij}. 
Conversely, if $\cun$ is componentwise \bijective\SDoc, 
by \refItem{prop:arrows}{bij}, $\SDoc\reidx{\cun_A,\cun_B}$ is an isomorphism and so, 
by \cref{prop:cb-adj:1}, we conclude 
$\lift{G}_{A,B} = \lift{F}^{-1}_{\fn{G}A,\fn{G}B} \circ \SDoc\reidx{\cun_A,\cun_B}$ is an isomorphism as well. 
\end{proof}

\cref{prop:cb-adj} essentially states that  the action on the fibres of a right adjoint of a change-of-base 1-arrow is completely determined by  the left adjoint and the counit of the adjunction. 
Moreover, the right adjoint is itself a change-of-base exactly when the counit is bijective. 
Thanks to the duality of relational doctrines described at the end of \cref{sect:rel-doc}, we can prove a similar result for left adjoints of change-of-base 1-arrows. 

\begin{corollary}\label[cor]{cor:cb-adj}
Let \oneAr{F}{\RDoc}{\SDoc} be a change-of-base and 
\oneAr{G}{\SDoc}{\RDoc} a left adjoint of $F$ in \RDtn. 
Then, the following hold: 
\begin{enumerate}
\item\label{cor:cb-adj:1}
for all objects $A,B$ in the base of $\SDoc$, we have 
$\lift{G}_{A,B} = \lift{F}^{-1}_{\fn{G}A,\fn{G}B} \circ \Ex^\SDoc\reidx{\un_A,\un_B}$
\item\label{cor:cb-adj:2}
$G$ is a change-of-base if and only if each component of $\un$ is \bijective\SDoc
\end{enumerate}
\end{corollary}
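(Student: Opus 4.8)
The plan is to deduce both items from \cref{prop:cb-adj} by applying the duality 2-functor $(\blank)\rop\colon\RDtn\co\to\RDtn$ recalled at the end of \cref{sect:rel-doc}. The crucial feature is that this 2-functor fixes 1-cells but reverses 2-cells, so it exchanges left and right adjoints: starting from the adjunction $G\dashv F$ with unit $\un$ and counit $\cun$, it produces in \RDtn an adjunction $F\rop\dashv G\rop$ whose unit is $\cun\rop$ and whose counit is $\un\rop$. Since $\lift{F}$ is a natural isomorphism precisely when its opposite $\lift{F\rop}$ is, the 1-arrow $F\rop$ is again a change-of-base, and now $G\rop$ is its \emph{right} adjoint. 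Hence \cref{prop:cb-adj} applies verbatim to the pair $F\rop, G\rop$.

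Next I would transport the conclusions back through the involution using the translation dictionary for the opposite doctrine: the reindexings of $\SDoc\rop$ are the left adjoints $\Ex^\SDoc$, on objects $\fn{G\rop}=\fn{G}$, the fibre actions $\lift{G\rop},\lift{F\rop}$ coincide as underlying functions with $\lift{G},\lift{F}$, and the base components of the counit $\un\rop$ of $F\rop\dashv G\rop$ are exactly the arrows $\un_A$. Substituting these identifications into \refItem{prop:cb-adj}{1}, the equation $\lift{G\rop}_{A,B}=\lift{F\rop}^{-1}\circ\SDoc\rop\reidx{(\un\rop)_A,(\un\rop)_B}$ becomes $\lift{G}_{A,B}=\lift{F}^{-1}_{\fn{G}A,\fn{G}B}\circ\Ex^\SDoc\reidx{\un_A,\un_B}$, which is the first item.

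For the second item I would use that both properties involved are self-dual under $(\blank)\rop$. On the one hand, $G$ is a change-of-base exactly when $G\rop$ is, since $\lift{G}$ is an isomorphism iff $\lift{G\rop}$ is. On the other hand, the $\SDoc\rop$-graph of an arrow $f$ of the base is $\gr{f}\rconv$, and since bijectivity is the symmetric demand that both $\gr{f}$ and $\gr{f}\rconv$ be functional and total, $f$ is \bijective{\SDoc\rop} iff it is \bijective{\SDoc}. Thus \refItem{prop:cb-adj}{2}, read for $F\rop\dashv G\rop$ with counit $\un\rop$, states that $G\rop$ is a change-of-base iff each $(\un\rop)_A=\un_A$ is \bijective{\SDoc\rop}, which is exactly the second item.

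The only genuinely delicate point — and the one I would double-check — is the bookkeeping of the duality: because $(\blank)\rop$ reverses 2-cells, the original \emph{unit} $\un$ becomes the \emph{counit} of the dualized adjunction, and it is $F\rop$, not $G\rop$, that plays the role of the change-of-base to which \cref{prop:cb-adj} is applied; confusing these would yield the wrong formula. Once the correspondence, together with the identification of $\SDoc\rop$-reindexing with $\Ex^\SDoc$, is in place, both items follow with no extra computation. Should one prefer to avoid the duality, the two items can be reproved directly by mimicking the proof of \cref{prop:cb-adj} with reindexing and its left adjoint interchanged: the nontrivial inequality $\lift{F}^{-1}_{\fn{G}A,\fn{G}B}\circ\Ex^\SDoc\reidx{\un_A,\un_B}\order\lift{G}_{A,B}$ follows from the 2-arrow inequality $\id\order\SDoc\reidx{\un_A,\un_B}\circ\lift{F}_{\fn{G}A,\fn{G}B}\circ\lift{G}_{A,B}$ for $\un$ by transposing along $\Ex^\SDoc\reidx{\un_A,\un_B}\dashv\SDoc\reidx{\un_A,\un_B}$, while the reverse inequality comes from the counit $\cun$ and the triangle identities.
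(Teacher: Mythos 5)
Your proposal is correct and takes essentially the same route as the paper: the paper's proof likewise applies \cref{prop:cb-adj} to the dual adjunction $F\rop \dashv G\rop$ produced by the involution \fun{(\blank)\rop}{\RDtn\co}{\RDtn}, noting precisely your key identifications --- that $\un\rop$ is the counit of the dualised adjunction and that $\SDoc\rop\reidx{\un\rop_A,\un\rop_B} = \Ex^\SDoc\reidx{\un_A,\un_B}$. Your additional checks (self-duality of \bijective\SDoc arrows, the translation dictionary for $\rop$) and the alternative direct argument are elaborations of details the paper leaves implicit.
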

\begin{proof}
It follows by \cref{prop:cb-adj}, noting that 
\oneAr{G\rop}{\SDoc\rop}{\RDoc\rop} is right adjoint to \oneAr{F\rop}{\RDoc\rop}{\SDoc\rop}, moreover 
\twoAr{\un\rop}{F\rop G\rop}{\Id_{\RDoc\rop}} is the counit and 
$\SDoc\rop\reidx{\un\rop_A,\un\rop_B} = \Ex^\SDoc\reidx{\un_A,\un_B}$. 
\end{proof}

We are now ready to tackle our problem. 
Let \fun{\RDoc}{\bop{\CC}}{\Pos} be a relational doctrine. 
We have the following commutative diagram of 1-arrows in \RDtn
\[\xymatrix{
& \Ruc\RDoc 
\\
  \complete\RDoc
  \ar[ru] 
  \ar@{^(->}[r]_-{\IncAr[\RDoc]}
& \RDoc 
  \ar[u]_-{\GrAr[\RDoc]} 
}\]
where both $\IncAr[\RDoc]$ and $\GrAr[\RDoc]$ are change-of-base and identity-on-objects, hence, 
so is their composition $\GrAr[\RDoc]\circ\IncAr[\RDoc]$. 

A Cauchy-completion of $\RDoc$ is a change-of-base left adjoint of $\IncAr[\RDoc]$. 
Intuitively, this means that we can turn any object of \CC into a strongly Cauchy-complete one in such a way that 
relations between completed objects are the same as those between the original ones. 
Note that, since $\fn{\IncAr[\RDoc]}$ is fully faithful, a Cauchy-completion exhibits $\complete\RDoc$ as a \emph{reflective subdoctrine} of $\RDoc$.

\begin{theorem}\label[thm]{thm:sruc-completion}
For every relational doctrine $\fun{\RDoc}{\bop\CC}{\Pos}$ the following are equivalent:
\begin{enumerate}
\item $\GrAr[\RDoc] \circ \IncAr[\RDoc]$ is an equivalence in \RDtn 
\item $\IncAr[\RDoc]$ has a change-of-base left adjoint in \RDtn 
\end{enumerate}
\end{theorem}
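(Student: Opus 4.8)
The plan is to route both implications through a single intermediate condition:

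$(\star)$ every object $X$ of $\CC$ admits a strongly Cauchy-complete object $X^{*}$ and a \bijective\RDoc arrow $\oneAr{\un_X}{X}{X^{*}}$.

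I would begin by analysing the composite change-of-base 1-arrow $E\colon\complete\RDoc\to\Ruc\RDoc$ given by $E=\GrAr[\RDoc]\circ\IncAr[\RDoc]$. On bases, $\fn E$ is the inclusion on objects and sends an arrow $f$ to $\gr f$; since every object of $\complete[\RDoc]{\CC}$ is strongly Cauchy-complete, for such $X,Y$ the graph assignment $\Hom\CC XY\to\Hom{\Map\RDoc}XY$ is a bijection --- surjective because $Y$ is Cauchy-complete, injective because $Y$ is extensional --- so $\fn E$ is fully faithful. Next I would observe that a \bijective\RDoc arrow $X\to X^{*}$ is exactly an isomorphism $X\cong X^{*}$ in $\Map\RDoc$, and conversely any isomorphism of $\Map\RDoc$ with Cauchy-complete codomain is the graph of such an arrow, because its underlying functional and total relation is tracked. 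Thus $(\star)$ says precisely that every object of $\Map\RDoc$ is isomorphic to one in the image of $\fn E$, i.e.\ that $\fn E$ is essentially surjective.

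To prove $(1)\iff(\star)$ I would use that $E$ is an equivalence in \RDtn if and only if $\fn E$ is an equivalence of categories: the forward direction is preservation of equivalences by the 2-functor sending a relational doctrine to its base, and the converse holds because $E$ is change-of-base, so a pseudo-inverse of $\fn E$ can be lifted by transporting it along the fibre isomorphisms $\lift E$. As $\fn E$ is always fully faithful, $E$ is an equivalence if and only if $\fn E$ is essentially surjective, which is exactly $(\star)$.

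For $(2)\Rightarrow(\star)$ I would simply invoke \cref{cor:cb-adj}: if $L$ is a change-of-base left adjoint of $\IncAr[\RDoc]$ with unit $\un$, then each $\un_X\colon X\to\fn L X$ is \bijective\RDoc and $\fn L X$ lies in $\complete[\RDoc]{\CC}$, so $(\star)$ holds. The substantial direction is $(\star)\Rightarrow(2)$, where I would build $\oneAr{L}{\RDoc}{\complete\RDoc}$ by transport along the chosen bijective arrows: set $\fn L X=X^{*}$, take $\fn L f$ to be the unique tracking arrow of $\gr{\un_X}\rconv\rcomp\gr f\rcomp\gr{\un_Y}$ (unique by \SRUC of $Y^{*}$), and put $\lift L_{X,Y}(\relr)=\gr{\un_X}\rconv\rcomp\relr\rcomp\gr{\un_Y}$. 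Since each $\un_X$ is \bijective\RDoc we have $\gr{\un_X}\rcomp\gr{\un_X}\rconv=\rid_X$ and $\gr{\un_X}\rconv\rcomp\gr{\un_X}=\rid_{X^{*}}$, so $\lift L$ is a natural isomorphism by \cref{prop:arrows} and $L$ is change-of-base. The arrows $\un_X$ then assemble into the unit $\twoAr{\un}{\Id_\RDoc}{\IncAr[\RDoc]\circ L}$, and on a strongly Cauchy-complete object $W$ the \bijective\RDoc arrow $\un_W$ has Cauchy-complete domain, so \cref{prop:sruc-iso} makes it an isomorphism, yielding the counit $\cun_W=\un_W^{-1}$.

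The hard part will be the bookkeeping inside $(\star)\Rightarrow(2)$: checking that $\fn L$ is functorial, that $L$ preserves relational identities, composition and converse, and that $\un$ and $\cun$ satisfy the triangle identities. I expect each of these to reduce, by \cref{prop:tot-fun-discrete} and \SRUC, to an equality of functional and total relations that then holds by the calculus of relations, the recurring device being that such a relation is pinned down by its unique tracking arrow. The one auxiliary fact worth isolating, used in $(1)\iff(\star)$, is that a change-of-base 1-arrow is an equivalence in \RDtn precisely when its base functor is an equivalence of categories.
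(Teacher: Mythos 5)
Your proposal is correct, and all of its ingredients check out against the paper's toolkit, but it is organised around a genuinely different skeleton than the paper's proof. You route both implications through the condition $(\star)$ (every object admits a \bijective\RDoc arrow into a strongly Cauchy-complete one), which is precisely the practical criterion the paper isolates only afterwards, in \cref{rem:lousodopo}. For $1\Rightarrow 2$ the two arguments are close in substance: the paper also extracts, from the pseudoinverse of $\GrAr[\RDoc]\circ\IncAr[\RDoc]$, an isomorphism $\relt_A$ in $\Map\RDoc$, tracks it using strong Cauchy-completeness of the target to get a \bijective\RDoc unit $\eta_A$, and then verifies the reflection's universal property at the level of base categories before appealing to \cref{cor:cb-adj}; your $(\star)\Rightarrow(2)$ instead builds the 1-arrow $L$ with $\lift{L}_{X,Y}=\Ex^\RDoc\reidx{\un_X,\un_Y}$ and its unit and counit directly, which amounts to the same computations. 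The real divergence is in $2\Rightarrow 1$: the paper applies the 2-functor $\RtoRuc$ from \cref{thm:doc-to-ruc} to the adjunction, observes that the unit and counit become isomorphisms, and composes with the isomorphism $\GrAr[\complete\RDoc]$ provided by \cref{prop:sruc}; you instead reduce condition (1) entirely to the base functor $\fn{E}$ being fully faithful (always true, by strong Cauchy-completeness of the objects of $\complete[\RDoc]{\CC}$) and essentially surjective (equivalent to $(\star)$, since isomorphisms in $\Map\RDoc$ are exactly bijective relations and are tracked when the codomain is Cauchy-complete). This forces you to prove one auxiliary fact the paper never states: a change-of-base 1-arrow is an equivalence in \RDtn if and only if its base functor is an equivalence of categories. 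That lemma is true, and its verification is routine in the style of \cref{prop:cb-adj} (define the pseudoinverse's fibre action as $\lift{E}^{-1}\circ\SDoc\reidx{\cun_A,\cun_B}$, using that the counit components, being isomorphisms, are bijective), but it is a genuine proof obligation you should discharge rather than just isolate. In exchange, your route is more elementary and self-contained --- it avoids the $\RtoRuc$ machinery entirely --- and it makes explicit both the fully-faithful-plus-essentially-surjective content of condition (1) and the criterion $(\star)$ that the paper uses repeatedly in its examples.
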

 \begin{proof}
$1\Rightarrow 2$. 
Let \oneAr{F}{\Ruc\RDoc}{\complete\RDoc} be the pseudoinverse of $\GrAr[\RDoc]\circ\IncAr[\RDoc]$. 
By \cref{cor:cb-adj}, it suffices to prove that $\fn{\IncAr[\RDoc]}$ has a left adjoint in \Ct{Cat} and the unit of such adjuntion is componentwise \bijective\RDoc. 
Denote by $\SCompl$ the functor \fun{\fn{F}}{\Map\RDoc}{\complete[\RDoc]{\CC}}. 
Every object $A$ in $\CC$ is also an object of $\Map\RDoc$, so it is mapped by $\SCompl$ to an object $\SCompl(A)$ of $\complete[\RDoc]{\CC}$. 
Since $\fn{\GrAr[\RDoc]}\circ \fn{\IncAr[\RDoc]}$ is the identity on objects and $\SCompl$ is its pseudoinverse, 
there is also an isomorphism \fun{\relt_A}{A}{\SCompl(A)} in $\Map\RDoc$, that is, 
a bijective relation $\relt_A \in \RDoc(A,\SCompl(A))$. 
By hypothesis $\SCompl(A)$, being an object of $\complete[\RDoc]{\CC}$, is strongly Cauchy-complte, 
hence, there is an arrow $\fun{\eta_A}{A}{\SCompl(A)}$ in $\CC$ such that $\gr{\eta_A}=\relt_A$. 
Now, consider an arrow $\fun{f}{A}{Y}$ in $\CC$ where $Y$ is strongly Cauchy-complte, \ie, an object ot in $\complete[\RDoc]{\CC}$. 
The relation $\gr{\eta_A}\rconv\rcomp\gr{f} = \relt_A\rconv\rcomp\gr{f} \in \RDoc(\SCompl(A),Y)$ is functional and total as it is the composition of two functional and total relations. So there is a unique arrow $\fun{\overline{f}}{\SCompl(A)}{Y}$ in \CC such that $\gr{\overline{f}} = \gr{\eta_A}\rconv\rcomp\gr{f}$. 
Then, we have 
$\gr{\overline{f}\circ\eta_A} 
  = \gr{\eta_A}\rcomp\gr{\overline{f}}
  = \gr{\eta_A}\rcomp\gr{\eta_A}\rconv\rcomp\gr{f} 
  = \gr{f}$. 
By \cref{prop:sruc-vs-ruc}, $Y$ is also extensional, hence 
we deduce $\overline{f}\circ \eta_A = f$ in $\CC$. 
Furthermore, 
any other arrow $\fun{g}{\SCompl(A)}{Y}$ in \CC such that $g\circ \eta_A = f$ satisfies 
$\gr{g} 
  = \gr{\eta_A}\rconv\rcomp\gr{\eta_A}\rcomp\gr{g} 
  = \gr{\eta_A}\rconv\rcomp\gr{f} 
  = \gr{\overline{f}}$, 
hence, again by extensionality of $Y$, we conclude $g = \overline{f}$, 
proving that $\overline{f}$ is unique.
This proves that $\SCompl$ is a left adjoint of $\IncAr[\RDoc]$. 
Therefore, the thesis follows as $\eta_A$ is \bijective\RDoc by construction as $\gr{\eta_A} = \relt_A$. 

$2\Rightarrow 1$. 
Let \oneAr{F}{\RDoc}{\complete\RDoc} be the left adjoint of $\IncAr[\RDoc]$ and $\un$ and $\cun$ the unit and counit of the adjunction, respectively. 
By hypothesis, $\un_A$ is \bijective\RDoc for every object $A$ in \CC. 
Since $\fn{\IncAr[\RDoc]}$ is fully faithful, $\cun_A$ is an isomorphism for every object $A$ in $\complete[\RDoc]{\CC}$. 
Applying the 2-functor $\RtoRuc$, obtained after \cref{thm:doc-to-ruc}, 
we obtain an adjunction $\RtoRuc(F)\dashv \RtoRuc(\IncAr[\RDoc])$ between $\Ruc{\complete\RDoc}$ and $\Ruc\RDoc$, where the unit and counit  are given by 
$\RtoRuc(\un)_A = \gr{\un_A}$ and $\RtoRuc(\cun)_Y = \gr{\cun_Y}$ for $A$ in \CC and $Y$ in $\complete[\RDoc]{\CC}$. 
Since $\gr{\un_A}$ is an isomorphism in $\Map\RDoc$, as $\un_A$ is \bijective\RDoc by hypothesis, 
and $\gr{\cun_Y}$ is an isomorphism in $\Map{\complete\RDoc}$, as $\cun_A$ is an isomorphism in $\complete[\RDoc]{\CC}$, 
we get that \oneAr{\RtoRuc(\IncAr[\RDoc])}{\Ruc{\complete\RDoc}}{\Ruc\RDoc} is an equivalence. 
Furthermore, since $\complete\RDoc$ satisfies \SRUC, the 1-arrow \oneAr{\GrAr[\complete\RDoc]}{\complete\RDoc}{\Ruc{\complete\RDoc}} is an isomorphism by \cref{prop:sruc}. 
Hence, the composition $\RtoRuc(\IncAr[\RDoc]) \circ \GrAr[\complete\RDoc] = \IncAr[\RDoc] \circ \GrAr[\RDoc]$ is an equivalence, as needed. 
\end{proof}

\begin{corollary}\label[cor]{cor:sruc-completion}
Let $\CC \hookrightarrow \D$ be a reflective subcategory, with reflector \fun{\SCompl}{\D}{\CC}, and 
\fun{\RDoc}{\bop\CC}{\Pos} a relational doctrine satisfying \SRUC. 
Then, $\RDoc$ and $\complete{\RDoc\bop\SCompl}$ are equivalent. 
\end{corollary}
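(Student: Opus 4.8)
The plan is to study the pulled-back functor $\SDoc \coloneqq \RDoc\bop\SCompl$, which is again a relational doctrine, now on $\D$: precomposing a relational doctrine with a functor on the base leaves the relational structure untouched ($\rid^{\SDoc}_D = \rid^{\RDoc}_{\SCompl D}$, and composition and converse are those of $\RDoc$), and each lax-naturality inequality of $\RDoc$ is simply reindexed along images of arrows, so it still holds. Writing $\fun{I}{\CC}{\D}$ for the inclusion and $\un,\cun$ for the unit and counit of $\SCompl\dashv I$ (with $\cun$ a natural isomorphism since $I$ is fully faithful), I first note that the reflector yields a change-of-base $1$-arrow $\oneAr{J}{\SDoc}{\RDoc}$ with $\fn{J}=\SCompl$ and $\lift{J}=\id$, because $\RDoc\circ\bop{\fn{J}}=\SDoc$ on the nose. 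The whole statement will follow once I restrict $J$ to strongly Cauchy-complete objects and show it becomes an equivalence; note that $\complete{\SDoc}=\complete{\RDoc\bop\SCompl}$.

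The technical core is the characterisation: \emph{an object $D$ of $\D$ is strongly Cauchy-complete in $\SDoc$ if and only if $\oneAr{\un_D}{D}{I\SCompl D}$ is an isomorphism} (equivalently, $D$ lies in the repletion of $\CC$). For the ``if'' direction, extensionality of $D$ follows from naturality of $\un$ together with $\un_D$ being monic, since $\gr{f}=\gr{g}$ in $\SDoc$ unfolds to $\gr{\SCompl f}=\gr{\SCompl g}$ in $\RDoc$ and $\RDoc$ is extensional; for Cauchy-completeness, a functional and total $\relr\in\SDoc(E,D)=\RDoc(\SCompl E,\SCompl D)$ is, by \SRUC of $\RDoc$, the graph of a unique $\fun{h}{\SCompl E}{\SCompl D}$, and then $f\coloneqq\un_D^{-1}\circ Ih\circ\un_E$ satisfies $\SCompl f=h$ (by the triangle identities and naturality of $\cun$), whence $\gr{f}=\relr$. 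For the ``only if'' direction, $\SCompl\un_D$ is an isomorphism by the triangle identities, so $\gr{\un_D}$ is a bijective relation in $\SDoc$ by \cref{cor:bij}; hence $\gr{\un_D}\rconv$ is functional and total, and strong Cauchy-completeness of $D$ together with extensionality of $D$ and of $I\SCompl D$ (the latter being in the image of $I$) forces $\un_D$ to be invertible. I expect this lemma, and specifically the identity $\SCompl f=h$ and the bijectivity bookkeeping, to be the main obstacle.

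Granting the lemma, the base $\complete[\SDoc]{\D}$ of strongly Cauchy-complete objects is exactly the full image of $I$ up to isomorphism, and $\SCompl$ restricts to an equivalence $\fun{\SCompl}{\complete[\SDoc]{\D}}{\CC}$: it is essentially surjective because each $IC$ is strongly Cauchy-complete with $\SCompl IC\cong C$, faithful because such objects are extensional by \cref{prop:sruc-vs-ruc}, and full because for strongly Cauchy-complete $D,D'$ a preimage of $\fun{k}{\SCompl D}{\SCompl D'}$ is $\un_{D'}^{-1}\circ Ik\circ\un_D$ (the computation above). Consequently $J\circ\IncAr[\SDoc]\colon\complete{\SDoc}\to\RDoc$ is a change-of-base $1$-arrow whose underlying functor is an equivalence of categories and which is the identity on fibres; such a $1$-arrow is an equivalence in \RDtn, a pseudo-inverse being obtained by transporting the fibrewise identity along a pseudo-inverse of $\SCompl$ and reindexing along the resulting isomorphisms, in the same spirit as \cref{prop:cb-adj}. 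This exhibits $\RDoc$ and $\complete{\RDoc\bop\SCompl}$ as equivalent, as required.
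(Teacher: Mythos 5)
Your proof is correct, but it takes a genuinely different route from the paper's. The paper never identifies the Cauchy-complete objects of $\SDoc = \RDoc\bop\SCompl$ explicitly: it lifts the reflection $\SCompl\dashv I$ to an adjunction $G\dashv F$ in \RDtn between $\SDoc$ and $\RDoc$ in which both $1$-arrows are change-of-base, observes via \cref{prop:cb-adj} and \cref{cor:cb-adj} that the unit and counit are componentwise bijective, applies the $2$-functor $\RtoRuc$ from \cref{thm:doc-to-ruc} to turn this adjunction into an equivalence $\Ruc\SDoc\simeq\Ruc\RDoc$, and then concludes by combining \cref{prop:sruc} (which gives $\RDoc\cong\Ruc\RDoc$ since $\RDoc$ satisfies \SRUC) with \cref{thm:sruc-completion} to identify $\complete\SDoc$ with $\Ruc\SDoc$. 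You bypass $\RtoRuc$, \cref{thm:doc-to-ruc} and \cref{thm:sruc-completion} entirely, and instead prove a concrete characterisation lemma: the strongly Cauchy-complete objects of $\SDoc$ are exactly the objects $D$ with invertible unit $\un_D$, i.e.\ the repletion of $\CC$ in $\D$; from this you get that $\SCompl$ restricts to an equivalence $\complete[\SDoc]{\D}\to\CC$ and lift it by hand to a change-of-base equivalence of doctrines. Note that the ``only if'' half of your lemma is precisely \cref{prop:sruc-iso} (applied to the bijective arrow $\un_D$, whose bijectivity follows from \cref{cor:bij} as you say), so you could cite it rather than reprove it, and the ``if'' half redoes, in this special case, the tracking-arrow computation the paper performs inside the proof of \cref{thm:sruc-completion}. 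The trade-off is clear: the paper's argument is shorter given its machinery and exhibits the corollary as a purely formal consequence of the $2$-adjunction, while yours is self-contained, produces an explicit description of $\complete[\SDoc]{\D}$ and of the equivalence, and makes visible information the abstract proof leaves implicit. Your final step (``a change-of-base $1$-arrow whose base functor is an equivalence is an equivalence in \RDtn'') is stated rather than proved, but it is a standard fact whose verification goes exactly as you indicate, reindexing along the graphs of the isomorphisms of the base equivalence, which are bijective relations by \cref{cor:bij}.
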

\begin{proof}
Let $\SDoc = \RDoc\bop\SCompl$. 
By \cref{thm:sruc-completion}, it suffices to show that $\Ruc\RDoc$ and $\Ruc\SDoc$ are equivalent. 
The inclusion $\CC\hookrightarrow\D$ induces a change-of-base \oneAr{F}{\RDoc}{\SDoc}, where $\lift{F}_{X,Y} = \RDoc\reidx{\cun_X,\cun_Y}$, which is an isomorphism as $\cun$ is the counit of a reflection. 
Similarly, the reflector $\SCompl$ induces a change-of-base \oneAr{G}{\SDoc}{\RDoc}, where $\lift{G}_{A,B} = \id_{\RDoc(\SCompl(A),\SCompl(B))}$. 
Then, the reflection extends to an adjunction $G\dashv F$ in \RDtn, whose unit and counit are both bijetive by \cref{prop:cb-adj,cor:cb-adj}. 
Hence, the 2-functor $\RtoRuc$ maps this adjunction to an equivalence between $\Ruc\RDoc$ and $\Ruc\SDoc$ as needed. 
\end{proof}

Let us now focus on singletons. 
In set-theoretic terms, the set of singletons on a set $A$ is a subset $\Sing(A)$ of the powerset $\PP(A)$. 
The powerset $\PP(A)$ is completely characterised by the following universal property: 
it is the set \emph{classifying relations into $A$}, in the sense that for every set $X$ and relation $\relr\subseteq X\times A$ there is a unique function \fun{\chi_\relr}{X}{\PP(A)} such that $\ple{x,a}\in\relr$ if and only if $a\in \chi_\relr(x)$. 
In other words, this means that there is a natural bijection 
$\PP(A)^X \cong \PP(X\times A)$. Since relations are the arrows of the category  $\RelCatSet$ of sets and relations, this gives rise to a natural bijection 
$\Hom\Set{X}{\PP(A)} \cong \Hom{\RelCatSet}{X}{A}$, establishing an adjoint situation between $\Set$ and $\RelCatSet$. 
%
%
%
%
%

Then, for every subset $U$ of $\PP(A)$, the restriction of the natural bijection to $\Hom\Set{X}{U}$ determines a class of relations from $X$ to $A$. 
In particular, when $U$ is the set $\Sing(A)$ of singletons of $A$, the class of relations from $X$ to $A$ it determines is precisely that of functional and total ones. 
That is, there is a natural bijection $\Hom\Set{X}{\Sing(A)} \cong \Hom{{\Map{\Rel}}}{X}{A}$ (where $\Rel$ is the relational doctrine of set-theoretic relations as in \refItem{ex:rel-doc}{vrel}).
In this case, \ie for standard set-theoretic relations,
the adjunction defined by the previous natural bijection is an equivalence, but we cannot require this in general, as it would force the doctrine to satisfy the strong rule of unique choice. 
Hence, we give the following definition. 
 
\begin{definition}\label[def]{def:rel-sing}
A relational doctrine $\fun{\RDoc}{\bop{\CC}}{\Pos}$ has \emph{singleton objects} or simply \emph{singletons} 
if the functor $\fun{\fn{}\GrAr[\RDoc]}{\CC}{\Map{\RDoc}}$ has a fully faithful right adjoint $\fun{\Sing}{\Map{\RDoc}}{\CC}$. 
\end{definition}
 
We refer to $\fun{\Sing}{\Map{\RDoc}}{\CC}$ as the \emph{singleton functor}. 
The counit of the adjunction $\fn{\GrAr[\RDoc]}\dashv\Sing$ ensures that, for every object $A$ in $\CC$, there is a 
functional and total relation $\scun_A$ in $\RDoc(\Sing(A),A)$, representing the membership relation between $\Sing(A)$ and $A$, such that, 
for every object $X$ in \CC and every functional and total relation $\relr$ in $\RDoc(X,A)$, 
there is a unique arrow $\fun{\chi_\relr}{X}{\Sing(A)}$ in \CC such that 
$\relr = \gr{\chi_\relr} \rcomp \scun_A$. 
We shall call $\chi_\relr$ the \emph{classifying arrow} of $\relr$ and we will say that $\Sing(A)$ classifies functional and total relations into $A$. 
Since $\Sing$ is fully faithful, we now that $\scun$ is a natural isomorphism, that is, 
each component $\scun_A$ is a bijection. 
This fact captures key properties of singletons as described \eg, in \cite{Fourman1983-FOUSAL}. 
Functionality of $\scun_A$ captures the fact that if two elements belong to the same singleton, then they are equal, 
while totality says that  no singleton is empty. 
Injectivity of $\scun_A$ models the property that 
if two singletons have an element in common, then  they must be equal, 
while surjectivity ensures that for every element of $A$ there is a singleton it belongs to. 

Furthermore, since the 1-arrow \oneAr{\GrAr[\RDoc]}{\RDoc}{\Ruc\RDoc} is a change-of-base, 
by \cref{prop:cb-adj}, we know that the singleton functor extends uniquely to a right adjoint 1-arrow \oneAr{\SingAr}{\Ruc\RDoc}{\RDoc}, which is a change-of-base as $\scun$ is componentwise \bijective{\Ruc\RDoc}; 
in turn, by \cref{cor:cb-adj}, this implies that the unit of the adjunction, denoted by $\sun$, is componentwise \bijective\RDoc, 
because $\GrAr[\RDoc]$ is a change-of-base left adjoint. 

The fact that $\Sing(A)$ classifies functional and total relations into $A$ traces a connection between singletons and Cauchy-complete objects. 
Roughly, if $\Sing(A)$ and $A$ are isomorphic, then classifying arrows provide candidates for tracking arrows. 
This connection is made precise in \cref{thm:singletons}. 
We start by proving that singleton objects are strongly Cauchy-complete. 

\begin{lemma}\label[lem]{lem:sruc-sing} 
Let $\fun{\RDoc}{\bop\CC}{\Pos}$ be a relational doctrine with singleton objects. 
Then, the singleton 1-arrow factors through the inclusion \oneAr{\IncAr[\RDoc]}{\complete{\RDoc}}{\RDoc} 
\[\xymatrix{
& \Ruc\RDoc 
  \ar[d]^-{\SingAr} 
  \ar@{..>}[ld]_-{\SingAr'} 
\\
  \complete{\RDoc}
  \ar[r]_-{\IncAr[\RDoc]}
& \RDoc 
}\]
\end{lemma}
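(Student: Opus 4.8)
The plan is to reduce the factorisation to a single property of the objects in the image of the singleton functor, namely that $\Sing(A)$ is strongly Cauchy-complete for every $A$. Since $\IncAr[\RDoc]$ is the full inclusion of the strongly Cauchy-complete objects and leaves fibres and relational structure untouched, a factorisation $\SingAr = \IncAr[\RDoc]\circ\SingAr'$ exists precisely when the base functor $\Sing$, which is the base functor of $\SingAr$, takes values in $\complete[\RDoc]\CC$. Granting this, I would define $\SingAr'$ by taking its base functor to be the corestriction of $\Sing$ to $\complete[\RDoc]\CC$ and its fibre action to be that of $\lift{\SingAr}$; this is legitimate because the fibres of $\complete\RDoc$ on strongly Cauchy-complete objects coincide with the corresponding fibres of $\RDoc$, and the identity $\IncAr[\RDoc]\circ\SingAr' = \SingAr$ is then immediate.

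So the core of the argument is to show that $\Sing(A)$ is strongly Cauchy-complete. Here I would invoke the universal property recorded after \cref{def:rel-sing}: the counit provides a bijective functional and total relation $\scun_A\in\RDoc(\Sing(A),A)$, and every functional and total $\relr\in\RDoc(X,A)$ factors uniquely as $\relr = \gr{\chi_\relr}\rcomp\scun_A$ through a classifying arrow $\fun{\chi_\relr}{X}{\Sing(A)}$. Given a functional and total $\rels\in\RDoc(X,\Sing(A))$, I first note that $\rels\rcomp\scun_A\in\RDoc(X,A)$ is again functional and total, being a composite of such relations, and let $\chi$ be its classifying arrow, so that $\rels\rcomp\scun_A = \gr{\chi}\rcomp\scun_A$. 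Composing on the right with $\scun_A\rconv$ and using bijectivity of $\scun_A$, which gives $\scun_A\rcomp\scun_A\rconv = \rid_{\Sing(A)}$, I obtain $\rels = \rels\rcomp\scun_A\rcomp\scun_A\rconv = \gr{\chi}\rcomp\scun_A\rcomp\scun_A\rconv = \gr{\chi}$, so that $\chi$ tracks $\rels$.

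For uniqueness of the tracking arrow, suppose $\fun{f}{X}{\Sing(A)}$ also satisfies $\gr{f} = \rels$. Then $\gr{f}\rcomp\scun_A = \rels\rcomp\scun_A = \gr{\chi}\rcomp\scun_A$, so $f$ and $\chi$ are both classifying arrows of $\rels\rcomp\scun_A$, whence $f = \chi$ by the uniqueness built into the classifying property. This shows that every functional and total relation into $\Sing(A)$ has a unique tracking arrow, that is, $\Sing(A)$ satisfies \SRUC, which is exactly what the first paragraph needed.

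The only mildly delicate point, and the one I would be most careful about, is the right-cancellation of $\scun_A$: it rests on $\scun_A$ being a genuine bijective relation, an isomorphism in $\Map\RDoc$ because $\Sing$ is fully faithful, so that $\scun_A\rcomp\scun_A\rconv$ equals $\rid_{\Sing(A)}$ on the nose rather than merely comparing with it. Everything else is formal: the corestriction of $\Sing$ is well defined once each $\Sing(A)$ lies in $\complete[\RDoc]\CC$, and the transport of the fibre action from $\SingAr$ to $\SingAr'$ is available because $\IncAr[\RDoc]$ is a change-of-base.
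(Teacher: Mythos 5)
Your proof is correct, and while it shares the paper's overall skeleton---reduce the factorisation to showing that each $\Sing(A)$ is strongly Cauchy-complete, then corestrict $\Sing$ and transport the fibre action along the change-of-base inclusion---the core step is argued differently. The paper classifies the given functional and total relation $\relr\in\RDoc(X,\Sing(A))$ \emph{directly}, using the counit at $\Sing(A)$: this produces an arrow $\fun{\chi_\relr}{X}{\Sing(\Sing(A))}$ into the double singleton, and one must then invoke the triangle identities of the adjunction $\fn{\GrAr[\RDoc]}\dashv\Sing$ to identify $\scun_{\Sing(A)}$ with $\gr{\Sing(\scun_A)}$, so that the tracking arrow is $\Sing(\scun_A)\circ\chi_\relr$. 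You instead push $\rels$ forward along $\scun_A$ to get a functional and total relation into $A$, classify \emph{that} with the counit at $A$, and recover $\rels=\gr{\chi}$ by right-cancelling $\scun_A$ via the equality $\scun_A\rcomp\scun_A\rconv=\rid_{\Sing(A)}$. Your route buys a shorter argument that never mentions $\Sing(\Sing(A))$ or the triangle identities, at the price of leaning entirely on the bijectivity of $\scun_A$; but that bijectivity is exactly what the paper records just after \cref{def:rel-sing} (full faithfulness of $\Sing$ makes $\scun$ a natural isomorphism in $\Map\RDoc$, and an isomorphism between functional and total relations has its converse as inverse), so the step you flag as delicate is indeed available. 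Your uniqueness argument is also slightly more economical: where the paper exhibits $\Sing(\scun_A)^{-1}\circ f$ as a classifying arrow, you observe that any tracking arrow $f$ of $\rels$ is itself a classifying arrow of $\rels\rcomp\scun_A$ and conclude by uniqueness of classifying arrows. Both proofs are sound; yours is the more elementary of the two.
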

\begin{proof}
It suffices to show that $\Sing(A)$ is strongly Cauchy-complete, as $\fn{\IncAr[\RDoc]}$ is fully faithful and $\lift{\IncAr[\RDoc]}$ is componentwise the identity. 
Consider a functional and total relation $\relr$ in $\RDoc(X,\Sing(A))$. 
By \cref{def:rel-sing}, There is a unique $\fun{\chi_\relr}{X}{\Sing(\Sing(A))}$ with $\relr = \gr{\chi_\relr}\rcomp\scun_{\Sing(A)}$. 
By the triangle identities of the adjunction, we know that 
$\gr{\sun_{\Sing(A)}}\rcomp \scun_{\Sing(A)} = \rid_{\Sing(A)} = \gr{\sun_{\Sing(A)}}\rcomp\gr{\Sing(\scun_A)}$, 
where $\sun$ is the unit of the adjunction $\fn{\GrAr[\RDoc]}\dashv \Sing$. 
Since $\scun$ is a natural isomorphism, $\scun_{\Sing(A)}$ and $\gr{\Sing(\scun_A)}$ are bijections, the latter as it is the graph of an isomorphism. 
Therefore, from the equality above, we deduce that $\scun_{\Sing(A)} = \gr{\Sing(\scun_A)}$ and this implies that 
$\gr{\Sing(\scun_A)\circ \chi_\relr} 
  = \gr{\chi_\relr} \rcomp \gr{\Sing(\scun_A)} 
  = \gr{\chi_\relr}\rcomp \scun_{\Sing(A)} 
  = \relr$, proving that $\fun{\Sing(\scun_A)\circ\chi_\relr}{X}{\Sing(A)}$, is a tracking arrow of $\relr$. 
In order to show that this arrow is unique, 
consider an arrow \fun{f}{X}{\Sing(A)} such that $\relr=\gr{f}$. 
Then from $\gr{\Sing(\scun_A)^{-1}}\rcomp\scun_{\Sing(A)}=\rid_{\Sing(A)}$ it follows that 
$\relr=\gr{f}=\gr{f}\rcomp\rid_{\Sing(A)}=\gr{f}\rcomp\gr{\Sing(\scun_A)^{-1}}\rcomp\scun_{\Sing(A)}$, 
making $\Sing(\scun_A)^{-1}\circ f$ a classifying arrow of $\relr$. 
Therefore, $\Sing(\scun_A)^{-1}f = \chi_\relr$ and so we conclude $f=\Sing(\scun_A)\chi_\relr$.
\end{proof}

Actually, we can prove the following slightly stronger result: 
every strongly Cauchy-complete object is a singleton object. 

\begin{corollary}\label[cor]{cor:sruc-sing}
Let \fun{\RDoc}{\bop\CC}{\Pos} be a relational doctrine. 
An object $X$ in \CC is strongly Cauchy-complete if and only if \fun{\sun_X}{X}{\Sing(A)} is an isomorphism. 
\end{corollary}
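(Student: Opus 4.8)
The plan is to prove the two implications separately, using three facts already at hand: the unit $\sun$ is componentwise \bijective\RDoc (established just above), each singleton object is strongly Cauchy-complete (\cref{lem:sruc-sing}), and strong Cauchy-completeness is exactly Cauchy-completeness together with extensionality (\cref{prop:sruc-vs-ruc}). Since $\fn{\GrAr[\RDoc]}$ is the identity on objects, the relevant unit component is $\fun{\sun_X}{X}{\Sing(X)}$.

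For the forward direction, I would assume $X$ strongly Cauchy-complete, so by \cref{prop:sruc-vs-ruc} it is Cauchy-complete and extensional. By \cref{lem:sruc-sing}, $\Sing(X)$ is strongly Cauchy-complete, hence also extensional by \cref{prop:sruc-vs-ruc}. Now $\sun_X$ is \bijective\RDoc, its domain $X$ is Cauchy-complete, and both $X$ and $\Sing(X)$ are extensional, so \cref{prop:sruc-iso} applies directly and yields that $\sun_X$ is an isomorphism. This direction is essentially immediate.

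For the converse, I would assume $\sun_X$ is an isomorphism, exhibiting $X$ as isomorphic to the strongly Cauchy-complete object $\Sing(X)$. It then remains to observe that strong Cauchy-completeness transfers along isomorphisms of the base. Given an isomorphism $\fun{\phi}{X}{Y}$ with $Y$ strongly Cauchy-complete and a functional and total $\relr \in \RDoc(Z,X)$, the composite $\relr\rcomp\gr{\phi} \in \RDoc(Z,Y)$ is again functional and total, hence tracked by a unique $\fun{g}{Z}{Y}$. Then $f = \phi^{-1}\circ g$ tracks $\relr$, since $\gr{f} = \gr{g}\rcomp\gr{\phi^{-1}} = \relr\rcomp\gr{\phi}\rcomp\gr{\phi^{-1}} = \relr\rcomp\gr{\id_X} = \relr$; and $f$ is unique because any other tracking arrow $f'$ of $\relr$ gives $\gr{\phi\circ f'} = \gr{f'}\rcomp\gr{\phi} = \relr\rcomp\gr{\phi} = \gr{g}$, forcing $\phi\circ f' = g$ and hence $f' = f$. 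Instantiating $\phi = \sun_X$ and $Y = \Sing(X)$ completes the proof.

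The only mildly delicate point is the iso-invariance step in the converse; everything else reduces to a direct appeal to the quoted results. I expect the main pitfall to be purely notational, namely keeping the composition convention $\gr{g\circ f} = \gr{f}\rcomp\gr{g}$ (and the resulting identity $\gr{\phi}\rcomp\gr{\phi^{-1}} = \rid_X$) straight throughout the computation.
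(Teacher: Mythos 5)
Your proof is correct and follows essentially the same route as the paper: the forward direction applies \cref{prop:sruc-iso} to the \bijective\RDoc unit $\sun_X$, and the converse reduces to \cref{lem:sruc-sing}. The only difference is that you spell out details the paper leaves implicit, namely the verification of the extensionality hypotheses of \cref{prop:sruc-iso} via \cref{prop:sruc-vs-ruc}, and the transfer of strong Cauchy-completeness along the isomorphism $\sun_X$, which the paper compresses into the word ``immediate''.
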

\begin{proof}
The left-to-right implication follows from \cref{prop:sruc-iso}, as $\sun_X$ is always \bijective\RDoc. 
The right-to-left implication is immediate by \cref{lem:sruc-sing}. 
\end{proof}

We can now prove our second characterisation, showing that $\Ruc\RDoc$ and $\complete{\RDoc}$ are equivalent exactly when $\RDoc$ has singleton objects.

\begin{theorem}\label[thm]{thm:singletons}
For every relational doctrine $\fun{\RDoc}{\bop\CC}{\Pos}$ the following are equivalent:
\begin{enumerate}
\item $\GrAr[\RDoc] \circ \IncAr[\RDoc]$ is an equivalence in \RDtn 
\item $\GrAr[\RDoc]$ has a change-of-base right adjoint in \RDtn 
\end{enumerate}
\end{theorem}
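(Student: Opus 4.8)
The plan is to prove the two implications separately, exploiting the tight link between singleton objects and strongly Cauchy-complete objects recorded in \cref{lem:sruc-sing} and \cref{cor:sruc-sing}, and to lean throughout on the fact that every 1-arrow in sight ($\GrAr[\RDoc]$, $\IncAr[\RDoc]$, and the putative right adjoint $\SingAr$) is a change-of-base, so that \cref{prop:cb-adj} and \cref{cor:cb-adj} reduce all the 2-categorical bookkeeping to the underlying base categories. I would first record that condition~(2) is merely a reformulation of ``$\RDoc$ has singleton objects'' in the sense of \cref{def:rel-sing}: if $\GrAr[\RDoc]$ has a change-of-base right adjoint $\SingAr$, then by \refItem{prop:cb-adj}{2} its counit $\scun$ is componentwise $\bijective{\Ruc\RDoc}$, hence an isomorphism in $\Map\RDoc$, so the underlying right adjoint $\fn{\SingAr}=\Sing$ is fully faithful; conversely, the discussion following \cref{def:rel-sing} already shows that singletons yield exactly such a change-of-base right adjoint. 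This lets me treat (2) and ``$\RDoc$ has singletons'' interchangeably.

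For $(2)\Rightarrow(1)$ I would start from the factorisation $\SingAr=\IncAr[\RDoc]\circ\SingAr'$ of \cref{lem:sruc-sing} and claim that $\SingAr'\colon\Ruc\RDoc\to\complete{\RDoc}$ is a pseudo-inverse of $\GrAr[\RDoc]\circ\IncAr[\RDoc]$. One composite is $\GrAr[\RDoc]\circ\IncAr[\RDoc]\circ\SingAr'=\GrAr[\RDoc]\circ\SingAr$, whose underlying endofunctor of $\Map\RDoc$ is $\fn{\GrAr[\RDoc]}\circ\Sing$ and is naturally isomorphic to the identity through the counit $\scun$, which is bijective; the other composite $\SingAr'\circ\GrAr[\RDoc]\circ\IncAr[\RDoc]$ sends a strongly Cauchy-complete object $X$ to $\Sing(X)$, and the unit $\sun_X\colon X\to\Sing(X)$ is an isomorphism by \cref{cor:sruc-sing}, giving a natural isomorphism to the identity of $\complete{\RDoc}$. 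Because all the 1-arrows are change-of-base, these base-level natural isomorphisms automatically upgrade to 2-isomorphisms in \RDtn, so $\GrAr[\RDoc]\circ\IncAr[\RDoc]$ is an equivalence.

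For $(1)\Rightarrow(2)$, let $F\colon\Ruc\RDoc\to\complete{\RDoc}$ be a pseudo-inverse of $\GrAr[\RDoc]\circ\IncAr[\RDoc]$ and set $\Sing:=\fn{\IncAr[\RDoc]}\circ\fn{F}$, which lands among strongly Cauchy-complete objects and is fully faithful, being a composite of a full inclusion and an equivalence. I would exhibit $\fn{\GrAr[\RDoc]}\dashv\Sing$ by means of the counit $\scun_A\colon\fn{\GrAr[\RDoc]}(\Sing A)\to A$ arising from the equivalence isomorphism $(\GrAr[\RDoc]\circ\IncAr[\RDoc])\circ F\cong\Id_{\Ruc\RDoc}$, which is a bijective relation in $\RDoc(\Sing A,A)$: given a functional and total $\relr\in\RDoc(X,A)$, the relation $\relr\rcomp\scun_A\rconv$ is again functional and total, so strong Cauchy-completeness of $\Sing(A)=\fn{F}(A)$ yields a unique tracking arrow $h\colon X\to\Sing(A)$; since $\scun_A\rconv\rcomp\scun_A=\rid_A$ one checks $\gr{h}\rcomp\scun_A=\relr$, and uniqueness follows from \cref{prop:tot-fun-discrete}, so $h$ is the unique transpose of $\relr$ along $\scun_A$. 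This shows that $\RDoc$ has singleton objects, whence, by the reformulation above, $\GrAr[\RDoc]$ has a change-of-base right adjoint, namely $\IncAr[\RDoc]\circ F$.

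I expect the main obstacle to be organisational rather than computational: keeping straight the three doctrines $\RDoc$, $\complete{\RDoc}$ and $\Ruc\RDoc$ together with the identity-on-objects, change-of-base 1-arrows between them, and ensuring that each underlying-categorical adjunction or equivalence genuinely promotes to one in \RDtn. This is precisely where I would invoke \cref{prop:cb-adj} and \cref{cor:cb-adj}, which guarantee that for change-of-base 1-arrows the fibrewise data is forced and that bijectivity of the units and counits is exactly what makes the lift go through. The only genuine calculation is the verification that $\scun_A$ classifies functional and total relations into $A$, and it is routine given \cref{prop:tot-fun-discrete} and the definition of strong Cauchy-completeness.
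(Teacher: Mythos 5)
Your proposal is correct and takes essentially the same route as the paper's proof: for $(1)\Rightarrow(2)$ you build $\Sing(A)=\fn{F}(A)$ from the pseudo-inverse and classify a functional total relation $\relr$ by the unique tracking arrow of $\relr\rcomp\scun_A\rconv$ granted by strong Cauchy-completeness, exactly as in the paper, and for $(2)\Rightarrow(1)$ you use the factorisation of \cref{lem:sruc-sing} together with \cref{cor:sruc-sing} and full faithfulness of $\Sing$, which is precisely the paper's argument phrased as a direct pseudo-inverse rather than as an adjunction with invertible unit and counit. The only (welcome) cosmetic difference is that you spell out, via \refItem{prop:cb-adj}{2}, why condition (2) is interchangeable with ``$\RDoc$ has singletons'' in the sense of \cref{def:rel-sing}, a point the paper leaves implicit.
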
 
\begin{proof}
$1\Rightarrow 2$. 
Let \oneAr{F}{\Ruc\RDoc}{\complete\RDoc} be the pseudoinverse of $\GrAr[\RDoc]\circ\IncAr[\RDoc]$ and 
let \twoAr{\scun}{\GrAr[\RDoc]\circ\IncAr[\RDoc]\circ F}{\Id_{\Ruc\RDoc}} be the natural isomorphism of the equivalence. 
For every $A$ in \CC we set $\Sing(A) = \fn{F}(A)$. 
Consider an arrow \fun{\relr}{X}{A} in $\Map\RDoc$, \ie, a functional and total relation in $\RDoc(X,A)$. 
Since $\scun_A$ is an isomorphism, hence a bijection, $\scun_A\rconv$ is a functional and total relation in $\RDoc(A,\Sing(A))$, 
hence, $\relr\rcomp \scun_A\rconv$ is a functional and total relation in $\RDoc(X,\Sing(A))$. 
By definition, $\Sing(A)$ is an object of $\complete[\RDoc]{\CC}$, thus it is strongly Cauchy-complete and so we know that 
there is a unique arrow \fun{f}{X}{\Sing(A)} in \CC such that $\gr{f} = \relr \rcomp \scun_A\rconv$, which is equivalent to 
$\gr{f}\rcomp\scun_A = \relr$. 
This proves that $\Sing$ determines a right adjoint of $\fn{\GrAr[\RDoc]}$ with counit given by $\scun$. 
Furthermore, since $\scun$ is componentwise an isomorphism, hence a bijection, we also derive that $\Sing$ is fully faithful and 
extends to a change-of-base right adjoint \oneAr{\SingAr}{\Ruc\RDoc}{\RDoc}, by \cref{prop:cb-adj}. 

$2\Rightarrow 1$. 
By \cref{lem:sruc-sing}, the singleton 1-arrow $\SingAr$ factors through $\IncAr[\RDoc]$, 
producing a 1-arrow \oneAr{\SingAr'}{\Ruc\RDoc}{\complete\RDoc}. 
This determines an adjoint situation $(\GrAr[\RDoc]\circ\IncAr[\RDoc]) \dashv\SingAr'$. 
Since $\Sing$ is fully faithful, the counit is a natural isomorphism and 
by \cref{cor:sruc-sing} the unit is an isomorphism as well. 
Therefore, the adjunction is actually an equivalence. 
\end{proof}

Combining \cref{thm:sruc-completion,thm:singletons} we obtain three equivalent conditions summarised in the following diagram 
\[\xymatrix{
&& \Ruc\RDoc 
   \ar@/^8pt/[ddll]^-{\SingAr'} 
   \ar@{}[ddll]|{\simeq} 
   \ar@/_8pt/[dd]_-{\SingAr} 
   \ar@{}[dd]|{\vdash} 
\\\\
   \complete\RDoc
   \ar@/^8pt/[rruu]^-{\GrAr[\RDoc]\circ\IncAr[\RDoc]} 
   \ar@{^(->}@/_8pt/[rr]_-{\IncAr[\RDoc]} 
   \ar@{}[rr]|{\bot}
&& \RDoc
   \ar@/_8pt/[uu]_-{\GrAr[\RDoc]}
   \ar@/_8pt/[ll]_-{\SComplAr} 
}\]
where $\SComplAr = \SingAr' \circ \GrAr[\RDoc]$ and 
$\SingAr = \IncAr[\RDoc] \circ \SingAr'$.

\begin{remark}\label[rem]{rem:lousodopo}
The previous diagram gives an easy way to check whether a relational doctrine $\fun{\RDoc}{\bop\CC}{\Pos}$ has singleton or not: it suffices to see if $\complete{\CC}$ is a reflective subcategory of $\CC$ with  \bijective\RDoc unite. Indeed in this case the inclusion of doctrines $\fun{\IncAr[\RDoc]}{\complete{\RDoc}}{\RDoc}$ has a change-of-base left adjoint (\ie $\RDoc$ has a Cauchy-completion) which is build as in \cref{cor:cb-adj} \cref{cor:cb-adj:1}, so $\GrAr[\RDoc] \circ \IncAr[\RDoc]$ is an equivalence by \cref{thm:sruc-completion}. 
\end{remark}

\begin{example} \label[ex]{ex:rel-sing} 
\begin{enumerate}

\item\label{ex:rel-sing:met}
Consider the relational doctrine $\fun{\MetDoc}{\bop{\Met}}{\Pos}$ described in \refItem{ex:rel-doc}{met}. 
As observed in \refItem{ex:extensional}{met} a metric space $Y$ is extensional if and only if it is separated and as observed in \refItem{ex:ruc-doc}{met} the space $Y$ is Cauchy-complete if and only if it is complete. 
Thus the full subcategory of $\Met$ on separated and complete metric spaces is $\complete[\MetDoc]{\Met}$. Note that dense isometries are $\bijective{\MetDoc}$ arrows, indeed an $\fun{f}{A}{B}$ is dense if for all $b\in B$ it is $\inf_{a\in A}\dist_B(f(a),b)=0$ so 
\[
\dist_{B}(b,b')=2\inf_{a\in A}\dist_{B}(f(a),b)+ \dist_{B}(b,b')= \inf_{a\in A}(2\dist_{B}(f(a),b)+ \dist_{B}(b,b'))=
\]
\[
\inf_{a\in A}(\dist_{B}(f(a),b)+\dist_{B}(f(a),b)+ \dist_{B}(b,b'))\geq\inf_{a\in A}(\dist_{B}(f(a),b)+\dist_{B}(f(a),b'))
=\gr{f}\rconv\rcomp\gr{f}(b,b')
\]
showing that $f$ is total. On the other hand if $f$ is an isometry
\[
\gr{f}\rcomp\gr{f}\rconv(a,a')=\inf_{b\in B}(\dist_{B}(f(a),b)+ \dist_{B}(b,f(a)))=\dist_{B}(f(a),f(a'))=\dist_A(a,a')
\]
showing that $f$ is $\injective{\MetDoc}$.
The completion of a metric space provide a left adjoint to the inclusion of $\complete[\MetDoc]{\Met}$ into $\Met$ whose unite at $Y$, \ie the non expansive map $\fun{\eta_Y}{Y}{\overline{Y}}$ is a dense isometry, hence  a family of $\bijective{\MetDoc}$ arrow. So the relational doctrine $\MetDoc$ has singletons by \cref{rem:lousodopo}, then
the category of complete separated metric space equivalent to $\Map{\MetDoc}$.

\item\label{ex:rel-sing:vec}

Consider the relational doctrine $\fun{\SVecDoc}{\bop\ct{SVec}}{\Pos}$ as in \refItem{ex:rel-doc}{vec}.
The category $\Ban$ of Banach spaces is $\complete[\SVecDoc]{\ct{SVec}}$ as shown in \refItem{ex:ruc-doc}{vec}. It is also a reflective subcategory of $\SVecDoc$ where every unit arrow $\fun{\un_X}{X}{\overline{X}}$ is a dense isometry. Dense isometries in $\ct{SVec}$ are $\SVecDoc$-bijections; the proof of this is similar to the one given in \refItem{ex:rel-sing}{met}, as a short linear map $\fun{f}{X}{Y}$ is dense if $\inf_{\vecx\in X}\Norm[Y]{f(\vecx)-\vecy}=0$ and is an isometry if  $\Norm[Y]{f(\vecx)}=\Norm[X]{\vecx}$, which implies that $\Norm[Y]{f(\vecx)-f(\vecx)}=\Norm[Y]{f(\vecx-\vecx')}=\Norm[X]{\vecx-\vecx'}$. So the unite of the adjunction between $\Ban$ and $\ct{SVec}$ is a family of $\SVecDoc$-bijections.
%
By \cref{rem:lousodopo} $\SVecDoc$ has singletons and $\Ban$ is equivalent to $\Map{\SVecDoc}$.

\item\label{ex:rel-sing:walters}
Consider the relational doctrine $\fun{\BiMod}{\bop{\BCatss{Rel(\ct{H})}}}{\Pos}$ introduced in \refItem{ex:rel-doc}{walters}, recall that it is extensional (see \refItem{ex:extensional}{walters}) and that $\complete[\BiMod]{(\BCatss{Rel(\ct{H})})}$ is the category of Cauchy-complete symmetric and skeletal $Rel(\ct{H})$-categories (see \refItem{ex:ruc-doc}{walters}). This category is reflexive \cite{Betti1982} and the reflector sends $X$ to $\overline{X}$ where $\Car{\overline{X}}$ is the set of pairs $\ple{h,\alpha}$ where $h\in\ct{H}$ is the one point $Rel(\ct{H})$-category with $e_h(*)=d_h(*,*)=h$ and $\fun{\alpha}{\Car{X}}{\ct{H}}$ is a left adjoint bimodule from $h$ to $X$  (whose right adjoint bimodule is necessarily $\alpha\rconv=\alpha$. Moreover
\[
e_{\overline{X}}(h,\alpha)=h
\qquad
d_{\overline{X}}(\ple{h,\alpha},\ple{g,\beta})=\bigvee_{x\in \Car{X}}\alpha(x)\wedge\beta(x)
\]
Abbreviate by $\eta_{X}^*(x_1)(x_2)$ the function $d_X(x_1,x_2)$, then the unite of the reflection $\fun{\eta_X}{X}{\Car{\overline{X}}}$ sends $x_1$ to $\ple{e_X(x_1),\eta_{X}^*(x_1)}$. Note that 
  $\gr{\eta_X}(x, \ple{h,\alpha})=\alpha(x)$, so $\gr{\eta_X}\rcomp\gr{\eta_X}\rconv\le d_{X}$ and  $d_{\overline{X}}\le \gr{\eta_X}\rconv\rcomp\gr{\eta_X}$.  Thus $\eta$ is a family of $\BiMod$-bijections and by \cref{rem:lousodopo}  the relational doctrine $\BiMod$ has singletons. Therefore the category $\complete[\BiMod]{(\BCatss{Rel(\ct{H})})}$ of Cauchy-complete symmetric and skeletal $Rel(\ct{H})$-categories  is equivalent to $\Map{\BiMod}$.To recover Walters theorem, that says that $\complete[\BiMod]{(\BCatss{Rel(\ct{H})})}$ is the category of sheaves over $\ct{H}$ \cite{W1} note that  for every $X$ in  $\BCatss{Rel(\ct{H})}$ the function $d_X$ is a partial equivalence relation, since symmetry gives $d_X(x_1,x_2)=d_X(x_2,x_1)$ and composition in $Rel(\ct{H})$ gives $d_X(x_1,x_2)\wedge d_X(x_2,x_3)\le d_X(x_1,x_3)$.  Note also that in $Rel(\ct{H})$ it is $\id_u=u$, so
$
e_X(x)=\id_{e_X(x)}\le d_X(x,x)\le e_X(x)\wedge e_X(x)\le e_X(x)
$, showing that $e_X$  is determined by $d_X$. This makes $\BCatss{Rel(\ct{H})}$ a category of partial equivalence relations $X=\ple{\Car{X}, d_X}$ such that $d_X(x_1,x_2)=d_X(x_1,x_1)=d_X(x_2,x_2)$ implies $x_1=x_2$ and where arrows $\fun{f}{X}{Y}$ are functions $\fun{f}{\Car{X}}{\Car{Y}}$ that preserve the partial equivalence relations and such that  $d_X(x,x)=d_Y(f(x),f(x))$. Using terminology as in \cite{PittsCL, OostenJ:reaait}, functions $\fun{\phi}{\Car{X}\times \Car{Y}}{\ct{H}}$ are called predicates, so a $Rel(\ct{H})$-bimodule $\phi$ from $X$ to $Y$ is a relational and strict predicate,
 while a left adjoint bimudule  is also a functional and total predicate. Therefore  $\Map{\BiMod}$ is the category of partial equivalence relations  whose arrows from $X$ to $Y$ are the total, functional, relational and strict predicates from $X$ to $Y$. The category $\Map{\BiMod}$ is then the topos of sheaves over $\ct{H}$ as described in \cite{Higgs84} (see also \cite{HylandJ:trit,PittsA:triir,OostenJ:reaait}).

\item\label{ex:rel-sing:topos} For an elementary topos $\ct{E}$ and a topology $j$, the full subcategory $\ct{Shv}_j(\ct{E})$ of $\ct{E}$ on $j$-sheaves is reflective and the unite arrows are $j$-dense.  That is they are $\bijective{\SubDoc_j}$ so $\SubDoc_j$ has singletons and $\ct{Shv}_j(\ct{E})\simeq \complete[\SubDoc_j]{\ct{E}}\simeq\Map{\SubDoc_j}$. 
\item\label{ex:rel-sing:kh}
Consider the relational doctrine $\fun{\TopDoc}{\bop\Top}{\Pos}$ presented in \refItem{ex:rel-doc}{kh}. 
The Stone-Cech compactification provides a left adjoint to the inclusion of the category $\ct{KH}$ of compact-Hausdorff into $\Top$. It is easy to check that unite arrows are \bijective{\TopDoc}, therefore $\TopDoc$ has singletons and $\ct{KH}\simeq\complete[\TopDoc]{\Top}\simeq\Map{\TopDoc}$.
\end{enumerate}
\end{example}

\begin{remark}\label[rem]{rem:frey}
Similar to the situation depicted in \refItem{ex:rel-sing}{walters},  Frey considered in \cite{Jonas} a general construction that has, among its instances, the category whose objects are $\ple{X,\rho}$ where $\fun{\rho}{X\times X}{\ct{H}}$ is a partial equivalence relation and an arrow $\fun{[f]}{\ple{X,\rho}}{\ple{Y,\sigma}}$ is an equivalence class of functions $\fun{f}{X}{Y}$ preserving the partial equivalence relations and where $[f]=[g]$ if $\rho(x_1,x_2)\le\sigma(f(x_1),f(x_2))$. We call this category $\Per{\ct{H}}$. Adding to $\Per{\ct{H}}$ total, functional, relational and strict relations as new morphisms, one finds the   topos  obtained from the localic tripos over $\ct{H}$ by the tripos-to-topos construction \cite{Jonas}, which is known to be the topos of sheaves over $\ct{H}$ \cite{PittsA:triir}. It is proved in \cite{Jonas} that the topos is equivalent to the full subcategory of $\Per{\ct{H}}$ on coarse objects, \ie objects right orthogonal to those arrows that are epic and monic. This result perfectly falls into our setting: the category $\Per{\ct{H}}$ is the base of the relational doctrine that sends $\ple{\ple{X,\rho},\ple{Y,\sigma}}$ to relational and strict relations from $\ple{X,\rho}$ to $\ple{Y,\sigma}$, \ie bimodules in the sense of \refItem{ex:rel-sing}{walters}; this doctrine has singleton objects, and the strongly Cauchy-complete -objects in $\Per{\ct{H}}$ are the coarse ones.
\end{remark}


\section{An application: Stone-Cech compactification via relational doctrines} 
\label{sect:compact} 

In \refItem{ex:rel-doc}{kh} we have considered a relational doctrine over the category of topological spaces, showing then in \refItem{ex:ruc-doc}{kh} and \refItem{ex:rel-sing}{kh} that strongly Cauchy-complete objects in that doctrine are exactly the compact Hausdorff spaces, which admit a Cauchy-completion via the well-known Stone-Cech compactification. 
It is known that the category of topological spaces and continuous maps can be seen as a category of lax algebras for the ultrafilter monad on \Set (with its canonical relational extension) \cite{Barr70}; 
while usual algebras for this monad are precisely compact Hausdorff spaces. 
In monoidal topology \cite{HofmannST14}, this structure is generalised taking an arbitrary monad on \Set and considering $\Qtl$-valued relations, whre $\Qtl$ is a quantale, leading to the notion of \ple{\mnd,\Qtl}-space. 
For such spaces one can identify compact Hausorff ones and describe a Stone-Cech compactification functor \cite{ClementinoH03}. 
In this section, we will rephrase these notions from monoidal topology in the language of relational doctrines, generalising \refItem{ex:rel-doc}{kh}. 
To this end, we will first recall some basic properties of monads on relational doctrines, \ie, monads in the 2-category \RDtn, 
focusing on the associated doctrine of algebras and closed relations, which plays the role of compact Hausdorff spaces. 
Then, we will define a category of lax algebras for the monad, which extends the usual category of algebras, representing all topological spaces. 
Finally, under suitable assumptions, we will construct in elementary terms a Stone-Cech compactification, that is, a left adjoint of the inclusion of usual algebras into lax ones, 
 proving that the former ones are the strongly Cauchy-complete objects of a doctrine on the latter ones. 

Let us fix throughout this section a relational doctrine \fun{\RDoc}{\bop\CC}{\Pos}. 
Following \cite{Street72}, a monad $\mnd = \ple{\mfun,\mun,\mmul}$ on $\RDoc$ consists of 
a 1-arrow \oneAr{\mfun}{\RDoc}{\RDoc} and two 2-arrows 
\twoAr{\mun}{\Id_\RDoc}{\mfun} and \twoAr{\mmul}{\mfun^2}{\mfun}, satisfying the usual monad laws, \ie, 
$\mmul(\mun\mfun) = \id_{\mfun} = \mmul(\mfun\mun)$ and $\mmul(\mmul\mfun) = \mmul(\mfun\mmul)$. 
Adapting from \cite{DagninoR21}, this means that we have 
\begin{itemize}
\item a monad $\fn{\mnd} = \ple{\fn\mfun,\mun,\mmul}$ on the base category \CC such that 
\item for all objects $X,Y$ in \CC and $\relr\in\RDoc(X,Y)$, the following inequalities hold: 
\[
\relr \order \gr{\mun_X}\rcomp\lift\mfun_{X,Y}(\relr)\rcomp\gr{\mun_Y}\rconv 
\qquad 
\lift\mfun_{\fn\mfun X,\fn\mfun Y}(\lift\mfun_{X,Y}(\relr)) \order \gr{\mmul_X}\rcomp\lift\mfun_{X,Y}(\relr)\rcomp\gr{\mmul_Y}\rconv 
\] 
\end{itemize}
We denote by $\EM\CC{\fn\mnd}$ the category of $\fn\mnd$-algebras and their homomorphisms. 
Let \ple{X,a} and \ple{Y,b} be $\fn\mnd$-algebras. 
A relation $\relr$ in $\RDoc(X,Y)$ is \emph{$a,b$-closed} if 
$\gr{a}\rconv\rcomp\lift\mfun_{X,Y}(\relr)\rcomp\gr{g}\order\relr$ or, equivalently, 
$\lift\mfun_{X,Y}(\relr)\rcomp\gr{b}\order\gr{a}\rcomp\relr$. 
We define a functor \fun{\EM\RDoc\mnd}{\bop{\EM\CC{\fn\mnd}}}{\Pos} where 
$\EM\RDoc\mnd(\ple{X,a},\ple{Y,b})$ is the suborder on $a,b$-closed relations and 
$\EM\RDoc\mnd\reidx{f,g} = \RDoc\reidx{f,g}$. 

\begin{proposition}\label[prop]{prop:em-doc}
The functor $\EM\RDoc\mnd$ is a relational doctrine where 
composition, identities and converse are as in $\RDoc$. 
\end{proposition}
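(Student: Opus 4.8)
The plan is to verify the axioms of \cref{def:rel-doc} for $\EM\RDoc\mnd$, exploiting that its fibrewise order and its three relational operations are literally the restrictions to closed relations of those of $\RDoc$. Because of this, all the equational axioms (associativity and unitality of $\rcomp$, and the three laws for $\rconv$) together with all the lax naturality inequalities hold \emph{for free}, being inherited verbatim from $\RDoc$, as soon as the structure is known to be well defined. Hence the entire content reduces to four closure facts: (i) $\rid_X$ is $a,a$-closed for every algebra $\ple{X,a}$; (ii) the composite of an $a,b$-closed and a $b,c$-closed relation is $a,c$-closed; (iii) the converse of an $a,b$-closed relation is $b,a$-closed; and (iv) reindexing along a pair of $\fn\mnd$-algebra homomorphisms sends closed relations to closed relations, so that $\EM\RDoc\mnd$ is a well-defined functor into \Pos. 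Throughout I would use freely that $\mfun$, being a 1-arrow, has $\lift\mfun$ preserving graphs, identities, composition and converse, that every graph $\gr h$ is total and functional, and that the two displayed formulations of closedness are equivalent.

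Facts (i)--(iii) are short. For (i), since $\lift\mfun(\rid_X)=\rid_{\fn\mfun X}$, the closure condition collapses to $\gr a\rconv\rcomp\gr a\order\rid_X$, which is exactly functionality of the graph $\gr a$. For (ii), using that $\lift\mfun$ preserves composition together with the second form of closedness, I would chain $\lift\mfun(\relr\rcomp\rels)\rcomp\gr c = \lift\mfun(\relr)\rcomp\lift\mfun(\rels)\rcomp\gr c \order \lift\mfun(\relr)\rcomp\gr b\rcomp\rels \order \gr a\rcomp\relr\rcomp\rels$. For (iii), I would apply $(\blank)\rconv$ to the first form of closedness of $\relr$ and use that $\rconv$ is an order preserving involution and that $\lift\mfun(\relr)\rconv=\lift\mfun(\relr\rconv)$, turning $\gr a\rconv\rcomp\lift\mfun(\relr)\rcomp\gr b\order\relr$ into $\gr b\rconv\rcomp\lift\mfun(\relr\rconv)\rcomp\gr a\order\relr\rconv$, which is precisely $b,a$-closedness of $\relr\rconv$.

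The main obstacle is (iv). Fix homomorphisms $\fun f{\ple{A,a'}}{\ple{X,a}}$ and $\fun g{\ple{B,b'}}{\ple{Y,b}}$ and an $a,b$-closed relation $\relr\in\RDoc(X,Y)$; I must show that $\RDoc\reidx{f,g}(\relr)=\gr f\rcomp\relr\rcomp\gr g\rconv$ is $a',b'$-closed. In graph form the homomorphism conditions read $\gr{\fn\mfun f}\rcomp\gr a=\gr{a'}\rcomp\gr f$ and $\gr{\fn\mfun g}\rcomp\gr b=\gr{b'}\rcomp\gr g$. The crux is the auxiliary inequality $\gr{\fn\mfun g}\rconv\rcomp\gr{b'}\order\gr b\rcomp\gr g\rconv$: since $\gr{\fn\mfun g}$ is total and functional, by the adjunction property of total and functional relations recalled just before \cref{prop:tot-fun-discrete} this is equivalent to $\gr{b'}\order\gr{\fn\mfun g}\rcomp\gr b\rcomp\gr g\rconv$, and the latter follows by rewriting $\gr{\fn\mfun g}\rcomp\gr b=\gr{b'}\rcomp\gr g$ and using totality of $\gr g$, namely $\rid\order\gr g\rcomp\gr g\rconv$. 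With this in hand, and writing closedness of $\relr$ in the second form $\lift\mfun(\relr)\rcomp\gr b\order\gr a\rcomp\relr$, I would compute $\lift\mfun(\gr f\rcomp\relr\rcomp\gr g\rconv)\rcomp\gr{b'} = \gr{\fn\mfun f}\rcomp\lift\mfun(\relr)\rcomp\gr{\fn\mfun g}\rconv\rcomp\gr{b'} \order \gr{\fn\mfun f}\rcomp\lift\mfun(\relr)\rcomp\gr b\rcomp\gr g\rconv \order \gr{\fn\mfun f}\rcomp\gr a\rcomp\relr\rcomp\gr g\rconv = \gr{a'}\rcomp\gr f\rcomp\relr\rcomp\gr g\rconv$, which is exactly $\gr{a'}\rcomp\RDoc\reidx{f,g}(\relr)$, i.e.\ the second form of $a',b'$-closedness. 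Monotonicity of $\EM\RDoc\mnd\reidx{f,g}$ and its functoriality in $\ple{f,g}$ are then immediate, as they coincide with those of $\RDoc\reidx{f,g}$; combined with (i)--(iii) and the inherited axioms this yields that $\EM\RDoc\mnd$ is a relational doctrine with the stated operations. I do not expect to need the algebra or monad laws at any point, only that $\mfun$ is a 1-arrow and that the structure maps and homomorphisms have total and functional graphs.
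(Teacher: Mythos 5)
Your proof is correct and, for the three closure facts that constitute the paper's proof, follows essentially the same route: identity closedness from functionality of $\gr{a}$, converse closedness by applying the involution (using that $\lift\mfun$ preserves converses), and composition closedness --- though for the latter you work with the second, equivalent form of closedness and chain inequalities directly, where the paper uses the first form and inserts $\gr{b}\rcomp\gr{b}\rconv$ via totality of $\gr{b}$; the two manipulations are interchangeable, since the equivalence of the two forms already encodes totality and functionality of the algebra graphs. The genuine difference is your step (iv): the paper's proof never verifies that reindexing along $\fn\mnd$-algebra homomorphisms sends closed relations to closed relations --- this is silently asserted when the functor $\EM\RDoc\mnd$ is defined just before the proposition --- whereas you prove it, and your proof of it is sound: the auxiliary inequality $\gr{\fn\mfun g}\rconv\rcomp\gr{b'}\order\gr{b}\rcomp\gr{g}\rconv$ does follow from the adjunction property of total and functional relations together with the homomorphism equation and totality of $\gr{g}$, and the ensuing chain lands exactly on the second form of $a',b'$-closedness of $\gr{f}\rcomp\relr\rcomp\gr{g}\rconv$. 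So your argument is more complete than the paper's; it is also the only place where the homomorphism equations enter, confirming your closing remark that the monad and algebra laws themselves are never used. (A minor point in your favour: your converse computation correctly carries $\lift\mfun$ through, whereas the paper's displayed computation drops it --- an evident typo.)
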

\begin{proof}
Let \ple{X,a}, \ple{Y,b} and \ple{Z,c} be $\fn\mnd$-algebras and $\relr\in\EM\RDoc\mnd(\ple{X,a},\ple{Y,b})$ and $\rels\in\EM\RDoc\mnd(\ple{Y,b},\ple{Z,c})$ be closed relations. 
It suffices to check the following facts. 
\begin{itemize}
\item $\relr\rcomp\rels$ is $a,c$-closed. 
We have 
\begin{align*} 
\gr{a}\rconv\rcomp\lift{\mfun}_{X,Z}(\relr\rcomp\rels)\rcomp\gr{c} 
  &= \gr{a}\rconv\rcomp\lift{\mfun}_{X,Y}(\relr)\rcomp\lift{\mfun}_{Y,Z}(\rels)\rcomp\gr{c} 
\\
  &\order \gr{a}\rconv\rcomp\lift\mfun_{X,Y}(\relr)\rcomp\gr{b}\rcomp\gr{b}\rconv\rcomp\lift{\mfun}_{Y,Z}(\rels)\rcomp\gr{c} 
    &&\text{$\gr{b}$ is total} 
\\
  &\order \relr\rcomp\rels
    &&\text{$\relr$ and $\rels$ are closed} 
\end{align*}
\item $\rid_X$ is $a,a$-closed. 
We have
$\gr{a}\rconv\rcomp\lift{\mfun}_{X,X}(\rid_X)\rcomp\gr{a}
  = \gr{a}\rconv\rcomp\gr{a}
  \order \rid_X$, as $\gr{a}$ is functional. 
\item $\relr\rconv$ is $b,a$-closed. 
We have 
$\gr{b}\rconv\rcomp\relr\rconv\rcomp\gr{a} 
  = (\gr{a}\rconv\rcomp\relr\rcomp\gr{b})\rconv
  \order \relr\rconv$, because $\relr$ is $a,b$-closed. 
\end{itemize}
\end{proof}

\begin{remark}
Adapting results in \cite{DagninoR21}, 
we can show that the doctrine $\EM\RDoc\mnd$ is the \emph{Eilenber-Moore object} \cite{Street72} for the monad $\mnd$ in \RDtn, that is, 
for every relational doctrine $\SDoc$, there is a natural isomorphism of categories 
\[ \Hom\RDtn\SDoc{\EM\RDoc\mnd} \cong \Hom{\mathsf{Mnd}(\RDtn)}{\ple{\SDoc,\Id}}{\ple{\RDoc,\mnd}} \]
where $\mathsf{Mnd}(\RDtn)$ is the 2-category of monads in $\RDtn$. 
Note that, in order to prove this result, the fact that  relational doctrines have left adjoints along all arrows is essential. 
\end{remark}

We now define the category $\SP\mnd$ of \emph{$\mnd$-spaces} as follows: 
\begin{description}
\item[objects] are pairs \ple{X,\ralg} where $X$ is an object of \CC and $\ralg$ is relation in $\RDoc(\fn\mfun X,X)$ such that 
\[
\rid_X \order \gr{\mun_X}\rcomp\ralg 
\qquad\text{and}\qquad 
\lift{\mfun}_{\fn\mfun X,X}(\ralg) \rcomp \ralg \order \gr{\mmul_X}\rcomp\ralg 
\]
or, equivalently, 
\[
\gr{\mun_X}\rconv\order\ralg 
\qquad\text{and}\qquad 
\gr{\mmul_X}\rconv\rcomp\lift{\mfun}_{\fn\mfun X,X}(\ralg)\rcomp\ralg \order \ralg 
\]
\item[arrows] \fun{f}{\ple{X,\ralg}}{\ple{Y,\ralgb}} are arrows \fun{f}{X}{Y} in \CC such that 
$\ralg \order \gr{\fn\mfun f} \rcomp \ralgb \rcomp \gr{f}\rconv$ or, equivalently, 
$\ralg \rcomp \gr{f} \order \gr{\fn\mfun f}\rcomp \ralgb$.
\end{description}
Following monoidal topology, the objects \ple{X,\ralg} of this category are regarded as ``convergence spaces'': 
$X$ is the object of points, 
$\fn\mfun X$ is an object of ``generalised sequences'' of points in $X$ and 
$\ralg$ is a \emph{convergence relation}, associating generalised sequences with their limit points. 
Similarly, arrows \fun{f}{\ple{X,\ralg}}{\ple{Y,\ralgb}} are arrows preserving the convergence relation, \ie 
\emph{continuous arrows}. 

We can see $\fn\mnd$-algebras as $\mnd$-spaces  by a functor 
\fun{\SPFun}{\EM\CC{\fn\mnd}}{\SP\mnd}
defined as follows: 
\[
\SPFun\ple{X,a} = \ple{X,\gr{a}}
\qquad 
\SPFun f = f 
\]
Essentially, this functor shows that 
every $\fn\mnd$-algebra can be regarded as a $\mnd$-space where the convergence relation is the graph of an arrow. 
Note also that $\SPFun$ is always faithful, but not necessarily full. 
Indeed, arrows in $\EM\CC{\fn\mnd}$ must satisfy a commutative diagram, while those in $\SP\mnd$ need only to preserve the convergence relation. 
However, the following holds. 

\begin{proposition}\label[prop]{prop:spfun-full}
If $\RDoc$ is extensional, then $\SPFun$ is full. 
\end{proposition}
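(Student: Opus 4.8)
The plan is to exploit that the image objects $\SPFun\ple{X,a} = \ple{X,\gr{a}}$ and $\SPFun\ple{Y,b} = \ple{Y,\gr{b}}$ carry convergence relations that are graphs of the structure maps, so that the defining inequality of a $\mnd$-space arrow becomes an inequality between graphs of composite arrows in \CC. Since graphs are functional and total, \cref{prop:tot-fun-discrete} forces this inequality to be an equality, and extensionality then lifts it to a strict equality of arrows, which is exactly the $\fn\mnd$-algebra homomorphism condition.

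First I would fix two $\fn\mnd$-algebras $\ple{X,a}$ and $\ple{Y,b}$ and take an arbitrary arrow $\fun{f}{\SPFun\ple{X,a}}{\SPFun\ple{Y,b}}$ in $\SP\mnd$. By definition of $\SP\mnd$ and of $\SPFun$, this is an arrow $\fun{f}{X}{Y}$ in \CC satisfying $\gr{a}\rcomp\gr{f}\order\gr{\fn\mfun f}\rcomp\gr{b}$, obtained by instantiating the convergence-preservation condition $\ralg\rcomp\gr{f}\order\gr{\fn\mfun f}\rcomp\ralgb$ with $\ralg=\gr{a}$ and $\ralgb=\gr{b}$. Since $\SPFun$ is the identity on underlying arrows, to prove fullness it suffices to show that this $f$ is already an $\fn\mnd$-algebra homomorphism, that is, $f\circ a = b\circ\fn\mfun f$.

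Next I would use functoriality of the graph construction, namely $\gr{g\circ h} = \gr{h}\rcomp\gr{g}$, to recognise both sides of the inequality as graphs of composites: $\gr{f\circ a} = \gr{a}\rcomp\gr{f}$ and $\gr{b\circ\fn\mfun f} = \gr{\fn\mfun f}\rcomp\gr{b}$, both lying in $\RDoc(\fn\mfun X, Y)$. The $\mnd$-space condition thus reads $\gr{f\circ a}\order\gr{b\circ\fn\mfun f}$. As graphs of arrows are always functional and total, \cref{prop:tot-fun-discrete} upgrades this to the equality $\gr{f\circ a} = \gr{b\circ\fn\mfun f}$, and extensionality of $\RDoc$ (in particular of the object $Y$) then yields $f\circ a = b\circ\fn\mfun f$, so $f$ is an arrow in $\EM\CC{\fn\mnd}$ with $\SPFun f = f$.

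The step I would flag as the crux is precisely the passage from the order-theoretic condition defining $\SP\mnd$-arrows to the strict equality of the commuting diagram defining $\EM\CC{\fn\mnd}$-arrows. This is where both \cref{prop:tot-fun-discrete} and extensionality are indispensable: discreteness turns the single inequality into an equality of relations, while extensionality is what rules out distinct arrows with equal graphs. This also explains why the hypothesis cannot be dropped, since without extensionality $\SPFun$ need not be full.
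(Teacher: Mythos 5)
Your proof is correct and follows essentially the same route as the paper's own argument: rewrite the $\SP\mnd$-arrow condition as $\gr{f\circ a}\order\gr{b\circ\fn\mfun f}$ via functoriality of graphs, upgrade the inequality to an equality by \cref{prop:tot-fun-discrete} since graphs are functional and total, and conclude $f\circ a = b\circ\fn\mfun f$ by extensionality. Nothing to add.
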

\begin{proof}
Let \ple{X,a} and \ple{Y,b} be $\fn\mnd$-algebras and 
\fun{f}{\ple{X,\gr{a}}}{\ple{Y,\gr{b}}} be an arrow in \SP\mnd. 
Then, we have 
$\gr{f\circ a} = \gr{a}\rcomp\gr{f} \order \gr{\fn\mfun f}\rcomp\gr{b} = \gr{b\circ\fn\mfun f}$, which, 
by \cref{prop:tot-fun-discrete}, implies $\gr{f\circ a} = \gr{b\circ\fn\mfun f}$. 
Hence, by extensionality we get 
$f\circ a = b\circ \fn\mfun f$, proving that 
\fun{f}{\ple{X,a}}{\ple{Y,b}} is an arrow in $\EM\CC{\fn\mnd}$. 
\end{proof}

A $\mnd$-space \ple{X,\ralg} is said to be \emph{compact Hausdorff} if $\ralg$ is functional and total, that is, 
every generalised sequence  converges exactly to one limit point. 
We denote by \CHSP\mnd the full subcategory of \SP\mnd spanned by compact Hausdorff $\mnd$-spaces. 
Clearly, the functor $\SPFun$ lands into \CHSP\mnd, hence every $\fn\mnd$-algebra, when regarded as a $\mnd$-space, is compact Hausdorff. 
However, the corestriction of $\SPFun$ to \CHSP\mnd, benoted by $\CHSPFun$, is not (essentially) surjective in general, because, without the rule of unique choice,  functional and total relations are not necessarily graphs of arrows. 
Hence, $\fn\mnd$-algebras are compact Hausdorff $\mnd$-spaces with the additional property that convergence can be ``effectively computed'', that is, it is described by an arrow in the base rather than by a relation. 

We can extend the doctrine $\EM\RDoc\mnd$ to \CHSP\mnd in such a way that $\CHSPFun$ extends to a change-of-base 1-arrow in \RDtn. 
We define a functor \fun{\CHEM\RDoc\mnd}{\bop{\CHSP\mnd}}{\Pos}, 
where $\CHEM\RDoc\mnd(\ple{X,\ralg},\ple{Y,\ralgb})$ is the suborder of $\RDoc(X,Y)$ on $\ralg,\ralgb$-closed relations, that is, those $\relr$ in $\RDoc(X,Y)$ such that  $\ralg\rconv\rcomp\lift{\mfun}_{X,Y}(\relr)\rcomp\ralgb \order \relr$, and 
$\CHEM\RDoc\mnd\reidx{f,g} = \RDoc\reidx{f,g}$. 
It is easy to see that $\CHEM\RDoc\mnd$ is a relational doctrine with the relational structure inherited  from $\RDoc$. 

\begin{proposition}\label[prop]{prop:chsp}
\begin{enumerate}
\item\label{prop:chsp:1}
If $\CHEM\RDoc\mnd$ satisfies \SRUC, then $\CHEM\RDoc\mnd$ and $\EM\RDoc\mnd$ are isomorphic. 
\item\label{prop:chsp:2}
If $\RDoc$ satisfies \SRUC, then $\CHEM\RDoc\mnd$ satisfies \SRUC as well. 
\end{enumerate}
\end{proposition}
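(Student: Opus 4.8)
The plan is to treat the two items separately, using two facts repeatedly: functional and total relations are closed under $\rcomp$ and form a discrete suborder by \cref{prop:tot-fun-discrete}, and the $1$-arrow $\mfun$ preserves graphs, so that $\gr{\fn\mfun f}=\lift\mfun_{X,Y}(\gr f)$ for every \fun{f}{X}{Y}. A useful consequence of the latter is that \emph{continuity of a map depends only on its graph}: the inequality defining an arrow of $\SP\mnd$ mentions $f$ only through $\gr f$ and $\gr{\fn\mfun f}=\lift\mfun(\gr f)$, so if \fun{p,q}{W}{V} have $\gr p=\gr q$ and $p$ is continuous between fixed $\mnd$-spaces carried by $W$ and $V$, then so is $q$. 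This graph-transfer of continuity is the device that will let me produce equalities in the base from equalities of graphs.

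For the second item, I assume $\RDoc$ satisfies \SRUC and take a functional and total $\relr$ in $\CHEM\RDoc\mnd(\ple{X,\ralg},\ple{Y,\ralgb})$; being functional and total in $\RDoc(X,Y)$ as well, it has a unique tracking arrow \fun{f}{X}{Y} in $\CC$, and I claim $f$ is already continuous and is the unique tracking arrow in $\CHSP\mnd$. Since $\ralg$ is functional and total it is a left adjoint with right adjoint $\ralg\rconv$, so I rewrite the closedness of $\relr=\gr f$, namely $\ralg\rconv\rcomp\gr{\fn\mfun f}\rcomp\ralgb\order\gr f$, as $\gr{\fn\mfun f}\rcomp\ralgb\order\ralg\rcomp\gr f$. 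Both sides are functional and total, hence equal by \cref{prop:tot-fun-discrete}, and this equality yields in particular the continuity inequality $\ralg\rcomp\gr f\order\gr{\fn\mfun f}\rcomp\ralgb$. Uniqueness is free: any continuous tracking arrow is an arrow of $\CC$ with graph $\relr$, hence equals $f$ by \SRUC of $\RDoc$.

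For the first item, I use that $\CHSPFun$ extends to a change-of-base $1$-arrow \oneAr{\mathbf{K}}{\EM\RDoc\mnd}{\CHEM\RDoc\mnd} whose fibrewise component is the identity, because the $a,b$-closedness defining $\EM\RDoc\mnd$ is literally the $\gr a,\gr b$-closedness defining $\CHEM\RDoc\mnd$. Hence it suffices to show that, under \SRUC of $\CHEM\RDoc\mnd$, the base functor $\CHSPFun$ is an isomorphism of categories, for then $\mathbf{K}$ is an isomorphism in \RDtn. By \cref{prop:sruc-vs-ruc}, \SRUC makes every object of $\CHSP\mnd$ extensional, which is the substitute for the extensionality hypothesis of \cref{prop:spfun-full}. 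Since $\CHSPFun$ is always faithful, I must establish bijectivity on objects and fullness.

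Surjectivity on objects is the crux: given $\ple{X,\ralg}$ in $\CHSP\mnd$, the free algebra yields the compact Hausdorff space $\ple{\fn\mfun X,\gr{\mmul_X}}=\CHSPFun\ple{\fn\mfun X,\mmul_X}$, and the second $\mnd$-space axiom in the form $\gr{\mmul_X}\rconv\rcomp\lift\mfun_{\fn\mfun X,X}(\ralg)\rcomp\ralg\order\ralg$ says precisely that the functional and total relation $\ralg$ is closed from $\ple{\fn\mfun X,\gr{\mmul_X}}$ to $\ple{X,\ralg}$. \SRUC of $\CHEM\RDoc\mnd$ then provides a continuous \fun{a}{\fn\mfun X}{X} with $\gr a=\ralg$, and I check that $\ple{X,a}$ is an $\fn\mnd$-algebra: the $\mnd$-space axioms and \cref{prop:tot-fun-discrete} give the graph equalities $\gr{\mun_X}\rcomp\ralg=\rid_X$ and $\lift\mfun(\ralg)\rcomp\ralg=\gr{\mmul_X}\rcomp\ralg$, that is $a\circ\mun_X\exteq\id_X$ and $a\circ\fn\mfun a\exteq a\circ\mmul_X$, and the graph-transfer of continuity together with extensionality of $\ple{X,\ralg}$ promote these to the algebra identities in $\CC$. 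Since $\CHSPFun\ple{X,a}=\ple{X,\ralg}$, the same extensionality argument applied to two algebra structures with equal graph gives injectivity on objects, and an analogous argument gives fullness: a continuous \fun{f}{\ple{X,\gr a}}{\ple{Y,\gr b}} satisfies $f\circ a\exteq b\circ\fn\mfun f$ by \cref{prop:tot-fun-discrete}, which extensionality of $\ple{Y,\gr b}$ upgrades to the homomorphism identity $f\circ a=b\circ\fn\mfun f$. I expect the main obstacle to be exactly this passage from $\exteq$ to genuine equality in $\CC$: the only available hypothesis is \SRUC of $\CHEM\RDoc\mnd$, not extensionality of $\RDoc$, so every base equality must be obtained indirectly by realising the two maps as continuous arrows with equal graphs into an extensional compact Hausdorff $\mnd$-space, the graph-transfer of continuity being what makes this uniform.
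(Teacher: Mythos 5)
Your proof is correct and follows essentially the same route as the paper's: item (2) via the tracking arrow in $\CC$ from \SRUC of $\RDoc$, the adjunction rewriting of closedness, and \cref{prop:tot-fun-discrete} to obtain continuity; item (1) by reducing to $\CHSPFun$ being an isomorphism of categories and using \SRUC of $\CHEM\RDoc\mnd$ plus extensionality (via \cref{prop:sruc-vs-ruc}) to build the algebra structure and establish bijectivity on objects and fullness. The only difference is presentational: you make explicit the ``graph-transfer of continuity'' device and the change-of-base reduction, both of which the paper uses implicitly.
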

\begin{proof}
\cref{prop:chsp:1}. 
It suffices to prove that $\CHSPFun$ is an isomorphism of categories. 
Let \ple{X,\ralg} be an object of \CHSP\mnd. 
Then, $\ralg$ is a functional and total relation in $\CHEM\RDoc\mnd(\ple{\fn\mfun X,\gr{\mmul_X}},\ple{X,\ralg})$. 
By \SRUC, we get a unique arrow \fun{a}{\ple{\fn\mfun X,\gr{\mmul_X}}}{\ple{X,\ralg}} such that $\gr{a}=\ralg$. 
By definition of $\mnd$-space and \cref{prop:tot-fun-discrete}, we get the equalities 
$\gr{a\circ\mun_X} = \gr{\id_X}$ and $\gr{a\circ\mmul_X} = \gr{a\circ\fn\mfun a}$. 
Since $\id_X$ and $a\circ \fn\mfun a$ are arrows in \CHSP\mnd, the equalities above imply that 
$a\circ\mun_X$ and $a\circ\mmul_X$ are arrows as well. 
Thus, since \SRUC implies extensionality by \cref{prop:sruc-vs-ruc}, we derive 
$a\circ\mun_X = \id_X$ and $a\circ\mmul_X = a \circ\fn\mfun a$, that is, 
\ple{X,a}  is a $\fn\mnd$-algebra. 
Since $a$ is uniquely determined, this proves that $\CHSPFun$ is bijective on objects. 

Because we already know that $\CHSPFun$ is faithful, to conclude we just need to check that it is full. 
Let \fun{f}{\ple{X,\gr{a}}}{\ple{Y,\gr{b}}} be an arrow in \CHSP\mnd. 
We have that 
$\gr{f\circ a} 
  = \gr{a}\rcomp\gr{f}
  \order \gr{\fn\mfun f} \rcomp \gr{b} 
  = \gr{b\circ\fn\mfun f}$, 
which by \cref{prop:tot-fun-discrete} implies 
$\gr{f\circ a} = \gr{b\circ\fn\mfun f}$. 
Since $f\circ a$ and $b\circ\fn\mfun f$ are parallel arrows in \CHSP\mnd, by extensionality we get $f\circ a = b \circ\fn\mfun f$, proving that 
\fun{f}{\ple{X,a}}{\ple{Y,b}} is a $\fn\mnd$-algebra homomorphism, as needed. 

\cref{prop:chsp:2}. 
Let $\relr$ be a functional and total relation in $\CHEM\RDoc\mnd(\ple{X,\ralg},\ple{Y,\ralgb})$. 
Then, it is a functional and total relation in $\RDoc(X,Y)$ and so by \SRUC there is a unique arrow \fun{f}{X}{Y} in \CC such that 
$\gr{f} = \relr$. 
Now, since $\relr$ is also $\ralg,\ralgb$-closed, we have 
$\ralg\rconv\rcomp\lift\mfun_{X,Y}(\gr{f})\rcomp\gr{\ralgb} \order \gr{f}$, which implies 
$\gr{\fn\mfun f}\rcomp\ralgb \order \ralg\rcomp\gr{f}$ and, by \cref{prop:tot-fun-discrete}, 
$\gr{\fn\mfun f}\rcomp\ralgb = \ralg\rcomp\gr{f}$. 
Therefore, we conclude that 
$\ralg \order \gr{\fn\mfun f}\rcomp\ralgb\rcomp\gr{f}\rconv$, proving that 
\fun{f}{\ple{X,\ralg}}{\ple{Y,\ralgb}} is an arrow in \CHSP\mnd, as needed. 
\end{proof}

In summary, we have two slightly different doctrines that could play the role of compact Hausdorff spaces, 
which coincide when $\RDoc$ has the strong rule of unique choice. 
In the following, we will work with $\EM\RDoc\mnd$, but all results can be easily recasted to $\CHEM\RDoc\mnd$. 
Furthermore, at the end, to apply  \cref{cor:sruc-completion}, we will need to assume that $\RDoc$  satisfies \SRUC, 
thus making the choice between $\EM\RDoc\mnd$ and $\CHEM\RDoc\mnd$ irrelevant. 

We now aim at constructing a left adjoint of $\SPFun$. 
To this end, we will use quotients, so let us start by recalling from \cite{DagninoP23}
how one can talk about quotients within a relational doctrine. 

Let $X$ be an object of \CC. 
An \emph{$\RDoc$-equivalence relation} on $X$ is a relation 
$\eqrelr$ in $\RDoc(X,X)$ satisfying 
$\rid_X\order\eqrelr$ (reflexivity), 
$\eqrelr\rconv\order\eqrelr$ (symmetry) and 
$\eqrelr\rcomp\eqrelr\order\eqrelr$ (transitivity). 
A \emph{quotient arrow} of $\eqrelr$ is an arrow \fun{q}{X}{W} in \CC such that 
$\eqrelr \order \gr{q}\rcomp\gr{q}\rconv$ and, 
for every arrow \fun{f}{X}{Z} with $\eqrelr\order\gr{f}\rcomp\gr{f}\rconv$, there is a unique arrow \fun{h}{W}{Z} such that $f = h\circ q$.  
Roughly, a quotient arrow of $\eqrelr$ is the ``smallest'' arrow $q$ whose kernel $\gr{q}\rcomp\gr{q}\rconv$ is larger than $\eqrelr$, 
that is, mapping $\eqrelr$-equivalent elements to equal ones. 
We say that \emph{$\RDoc$ has quotients} if every $\RDoc$-equivalence relation admits a quotient arrow. 

\begin{remark}
Quotient arrows in general are not very well-behaved and usually one requires them to satisfy also additional conditions such as 
effectiveness and surjectivity \cite{DagninoP23}. 
A quotient arrow $q$ is \emph{effective} if  $\eqrelr = \gr{q}\rcomp\gr{q}\rconv$, 
that is, elements which are equal under $q$ are $\eqrelr$-equivalent, and 
it is \emph{surjective} (a.k.a. descent in \cite{DagninoP23} or of effective descent in \cite{MaiettiME:quofcm}) if $\rid_W = \gr{q}\rconv\rcomp\gr{q}$, 
that is, $q$ is \surjective{\RDoc}. 
We do not need these conditions in the following, hence we do not assume them. 
\end{remark}

A 1-arrow \oneAr{F}{\RDoc}{\SDoc} always preserves equivalence relations: 
if $\eqrelr$ is an $\RDoc$-equivalence relation on $X$, then $\lift{F}_{X,X}(\eqrelr)$ is an $\SDoc$-equivalence relation on $\fn{F}X$. 
Then, when $\RDoc$ and $\SDoc$ have quotients, we say that $F$ preserves them if it maps quotient arrows of $\eqrelr$ in $\RDoc$ to quotient arrows of $\lift{F}_{X,X}(\eqrelr)$ in $\SDoc$. 

From now on, we make the following assumption.

\begin{assumption}\label[asm]{asm:quotients}
$\RDoc$ has quotients and $\fn\mfun$ preserves them. 
\end{assumption}

\begin{remark}
In \cite{DagninoP23} we present a universal construction which freely adds (effective descent) quotients to any relational doctrine. 
Hence, if \cref{asm:quotients} does not hold, we can always force it by applying this quotient completion. 
\end{remark}

The nice fact is that, under \cref{asm:quotients}, we can prove that the doctrine of $\fn\mnd$-algebras and closed relations has quotients as well. 

\begin{theorem}\label[thm]{thm:em-quotients}
$\EM\RDoc\mnd$ has quotients. 
\end{theorem}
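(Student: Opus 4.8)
The plan is to build quotients in $\EM\RDoc\mnd$ out of quotients in $\RDoc$. Fix an $\fn\mnd$-algebra $\ple{X,a}$ and an $\EM\RDoc\mnd$-equivalence relation $\eqrelr$ on it. Unfolding the definitions, $\eqrelr$ is in particular an $\RDoc$-equivalence relation on $X$ which is moreover $a,a$-closed. By \cref{asm:quotients} it has a quotient arrow $\fun{q}{X}{W}$ in $\RDoc$, and, since $\fn\mfun$ preserves quotients, both $\fn\mfun q$ and $\fn\mfun^2 q$ are again quotient arrows. First I would record the easy fact that quotient arrows are epimorphisms: if $u\circ q = v\circ q =: f$, then, using totality of $\gr{u}$, we get $\eqrelr\order\gr{q}\rcomp\gr{q}\rconv\order\gr{f}\rcomp\gr{f}\rconv$, so $f$ admits a unique factorisation through $q$ and hence $u = v$. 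I will cancel the epis $q$, $\fn\mfun q$ and $\fn\mfun^2 q$ repeatedly below.

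The core doctrinal step is to equip $W$ with an algebra structure. From $a,a$-closedness we have $\gr{a}\rconv\rcomp\lift\mfun_{X,X}(\eqrelr)\rcomp\gr{a}\order\eqrelr$, and since $\gr{a}$ is functional and total I can slide the two graphs across using the adjunction properties recalled just before \cref{prop:tot-fun-discrete}, obtaining $\lift\mfun_{X,X}(\eqrelr)\order\gr{a}\rcomp\eqrelr\rcomp\gr{a}\rconv$. Combining this with $\eqrelr\order\gr{q}\rcomp\gr{q}\rconv$ yields $\lift\mfun_{X,X}(\eqrelr)\order\gr{q\circ a}\rcomp\gr{q\circ a}\rconv$. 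Since $\fn\mfun q$ is a quotient arrow of $\lift\mfun_{X,X}(\eqrelr)$, its universal property factors $q\circ a$ uniquely as $c\circ\fn\mfun q$ for a suitable $\fun{c}{\fn\mfun W}{W}$; by construction this makes $q$ a homomorphism $\fun{q}{\ple{X,a}}{\ple{W,c}}$. The algebra laws for $\ple{W,c}$ then follow by epi-cancellation: precomposing the unit law $c\circ\mun_W=\id_W$ with $q$ and the associativity law $c\circ\mmul_W=c\circ\fn\mfun c$ with $\fn\mfun^2 q$, and using naturality of $\mun$ and $\mmul$ together with the laws of $\ple{X,a}$, both sides become equal after the (epi) quotient arrows are cancelled.

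It then remains to verify that $q$ is a quotient arrow of $\eqrelr$ in $\EM\RDoc\mnd$. Its graph is $a,c$-closed by a one-line computation from $c\circ\fn\mfun q = q\circ a$ and functionality of $\gr{a}$, while $\eqrelr\order\gr{q}\rcomp\gr{q}\rconv$ already holds in $\RDoc$. For the universal property, given a homomorphism $\fun{f}{\ple{X,a}}{\ple{Z,d}}$ with $\eqrelr\order\gr{f}\rcomp\gr{f}\rconv$, the $\RDoc$-quotient produces a unique $\fun{h}{W}{Z}$ with $f=h\circ q$, and precomposing $d\circ\fn\mfun h$ and $h\circ c$ with the epi $\fn\mfun q$ shows that $h$ is itself a homomorphism. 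I expect the only genuinely non-routine point to be the adjunction manipulation giving $\lift\mfun_{X,X}(\eqrelr)\order\gr{a}\rcomp\eqrelr\rcomp\gr{a}\rconv$; once the algebra structure is in place, the rest is the standard ``a quotient of an algebra is an algebra'' bookkeeping, made to work by preservation of quotients under $\fn\mfun$ and by quotient arrows being epic.
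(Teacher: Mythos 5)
Your proof is correct and follows essentially the same route as the paper's: take the $\RDoc$-quotient $q$ of $\eqrelr$, use $a,a$-closedness to obtain $\lift\mfun_{X,X}(\eqrelr)\order\gr{q\circ a}\rcomp\gr{q\circ a}\rconv$ and induce the algebra structure on $W$ through the universal property of the quotient arrow $\fn\mfun q$, then establish the algebra laws, the homomorphism property of $q$, and the universal property in $\EM\RDoc\mnd$ by cancelling along quotient arrows. Your explicit lemma that quotient arrows are epimorphisms is precisely the uniqueness-of-factorisation argument the paper uses implicitly in its diagram chases (where, as you correctly note, one also needs to cancel $\fn\mfun^2 q$ for the associativity law), so the two arguments differ only in presentation.
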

\begin{proof}
Let \ple{X,a} be a $\fn\mnd$-algebra and $\eqrelr$ an equivalence relation in $\EM\RDoc\mnd(\ple{X,a},\ple{X,a})$. 
Then, $\eqrelr$ is an $\RDoc$-equivalence relation on $X$, hence, there is a quotient arrow 
\fun{q}{X}{W} in \CC. 
Since $\mfun$ preserves quotients, $\fn\mfun q$ is a quotient arrow of the $\RDoc$-equivalence relation $\lift\mfun_{X,X}(\eqrelr)$ on $\fn\mfun X$. 
Let $f = q\circ a$, then we have 
\begin{align*}
\lift\mfun_{X,X}(\eqrelr) 
  &\order \gr{a}\rcomp\eqrelr\rcomp\gr{a}\rconv 
    &&\text{$\eqrelr$ is $a,a$-closed}
\\
  &\order \gr{a}\rcomp\gr{q}\rcomp\gr{q}\rconv\rcomp\gr{a}\rconv 
    &&\text{$q$ is a quotient}
\\
  &=\gr{f}\rcomp\gr{f}\rconv
\end{align*}
Hence, by the universal property of $q$, there is a unique arrow \fun{b}{\fn\mfun W}{W} making the following diagram commute: 
\[\xymatrix{
  \fn\mfun X 
  \ar[d]_-{a} 
  \ar[r]^-{\fn\mfun q}
& \fn\mfun W 
  \ar@{..>}[d]^-{b}
\\
  X 
  \ar[r]^-{q}
& W 
}\]
If we prove that $b$ is a $\fn\mnd$-algebra, this diagram shows that $q$ is a $\fn\mnd$-algebra homomorphism. 
This follows by the diagrams below, which commute because $a$ is a $\fn\mnd$-algebra and $q$ and $\fn\mfun q$ are quotient arrows. 
\[
\vcenter{\xymatrix{
  X 
  \ar@(ld,lu)[dd]_-{\id_X}
  \ar[d]^-{\mun_X}
  \ar[r]^-{q}
& W 
  \ar@/^8pt/[dd]^-{\id_W}
  \ar[d]_-{\mun_X}
\\
  \fn\mfun X 
  \ar[r]^-{\fn\mfun q}
  \ar[d]^-{a}
& \fn\mfun W
  \ar[d]_-{b}
\\
  X  
  \ar[r]^-{q}
& W
}}\quad 
\vcenter{\xymatrix{
& \fn\mfun^2 X
  \ar[rr]^-{\fn\mfun^2 q}
  \ar[dd]^(.3){\fn\mfun a}
  \ar[ld]_-{\mmul_X}
&
& \fn\mfun^2 W
  \ar[dd]^-{\fn\mfun b}
  \ar[ld]^-{\mmul_W}
\\
  \fn\mfun X 
  \ar[rr]_(.3){\fn\mfun q}
  \ar[dd]_-{a}
&
& \fn\mfun W
  \ar[dd]^(.3){b}
& 
\\
& \fn\mfun X 
  \ar[rr]_(.35){\fn\mfun q}
  \ar[ld]^-{a}
&
& \fn\mfun W
  \ar[ld]^-{b}
\\ 
  X
  \ar[rr]_-{q}
&
& W
}}
\]
To conclude, consider a $\fn\mfun$-algebra homomorphism \fun{g}{\ple{X,a}}{\ple{Z,c}}
such that $\eqrelr \order \gr{g}\rcomp\gr{g}\rconv$. 
Since $q$ is a quotient arrow in $\RDoc$, there is a unique  arrow \fun{h}{W}{Z} such that 
$g = h\circ q$. 
Note that we also have 
$\lift\mfun_{X,X}(\eqrelr) \order \gr{\fn\mfun g}\rcomp\gr{\fn\mfun g}\rconv$, 
as $\mfun$ preserves graphs. 
Therefore, the following diagram, which commutes since $g$ is a $\fn\mnd$-algebra homomorphism and $q$ and $\fn\mfun q$ are quotient arrows in $\RDoc$, 
shows that $h$ is a $\fn\mnd$-algebra homomorphism as well. 
\[\xymatrix{
  \fn\mfun X 
  \ar[rrrd]^-{\fn\mfun q}
  \ar[rd]_-{\fn\mfun g}
  \ar[dd]_-{a}
\\
& \fn\mfun Z
  \ar[rr]_-{\fn\mfun h}
  \ar[dd]^-{c}
&
& \fn\mfun W
  \ar[dd]^-{b}
\\
  X 
  \ar[rrrd]^-{q}
  \ar[rd]_-{g}
\\
& Z 
  \ar[rr]_-{h}
&
& W 
}\]
\end{proof}

We will define the action of the left adjoint of $\SPFun$ on a $\mnd$-space \ple{X,\ralg} as a suitable quotient of the free $\fn\mnd$-algebra \ple{X,\mmul_X} by a suitable equivalence relation, 
which essentially says that two generalised sequences are equivalent if they have a common limit point according to $\ralg$. 
However, in order to construct it, we need a further assumption on $\RDoc$. 

\begin{assumption}\label[asm]{asm:closure}
For every $\fn\mnd$-algebra \ple{X,a} and 
every relation $\relr$ in $\RDoc(X,X)$, there is a reflexive and transitive $a,a$-closed relation $\rtc[a]\relr$ in $\RDoc(X,X)$ such that 
$\relr\order \rtc[a]\relr$ and, 
for every reflexive and transitive $a,a$-closed relation $\rels$ in $\RDoc(X,X)$, if $\relr\order\rels$ then $\rtc[a]\relr \order \rels$. 
\end{assumption}

\begin{remark}
Essentially, \cref{asm:closure} requires that, for every $\fn\mnd$-algebra \ple{X,a}, 
we can compute the best reflexive, transitive and $a,a$-closed overapproximation of every  relation in $\RDoc(X,X)$. 
This can be achieved in many ways. 
For instance, assuming that the fibres of $\RDoc$ are join-semilattices, \cref{asm:closure} is equivalent to requiring that, for every relation $\relr$ in $\RDoc(X,X)$, 
the monotone function $\Phi_{a,\relr}$ on $\RDoc(X,X)$ defined by 
\[
\Phi_{a,\relr}(\relt) = \relr \lor \rid_X \lor \relt\rcomp\relt \lor \gr{a}\rconv\rcomp\lift\mfun_{X,X}(\relt)\rcomp\gr{a} 
\]
has a least (pre-)fixed point. 
For example, this is assured  by one of the following condition. 
\begin{itemize}
\item If the fibres of $\RDoc$ are complete lattices, then the Knaster-Tarski fixed point theorem ensures that every monotone function, like $\Phi_{a,\relr}$ has a least fixed point. 
\item If fibres of $\RDoc$ are suprema of $\omega$-cains, which are preserved by relational composition and by all components of $\lift\mfun$,
then the Kleene fixed point theorem ensures that every $\omega$-continuous function, like $\Phi_{a,\relr}$, has a least fixed point. 
\end{itemize}
\end{remark}

\cref{thm:compactification} provides a left adjoint of the functor \fun{\SPFun}{\EM\CC{\fn\mnd}}{\SP\mnd} 
that plays the role of the Stone-Cech compactification. 

\begin{lemma}\label[lem]{lem:closure-eq}
Let \ple{X,a} be a $\fn\mnd$-algebra. 
If $\relr$ is a symmetric relation in $\RDoc(X,X)$, then $\rtc[a]\relr$ is an $\EM\RDoc\mnd$-equivalence relation on \ple{X,a}. 
\end{lemma}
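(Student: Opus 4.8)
The plan is to note that \cref{asm:closure} already delivers most of what is required: by construction $\rtc[a]\relr$ is reflexive, transitive and $a,a$-closed, and since the relational structure of $\EM\RDoc\mnd$ is exactly that of $\RDoc$, being an $\EM\RDoc\mnd$-equivalence relation on $\ple{X,a}$ amounts precisely to these three properties together with symmetry. Thus the entire content of the lemma reduces to proving that $\rtc[a]\relr$ is symmetric, \ie $(\rtc[a]\relr)\rconv \order \rtc[a]\relr$.

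Writing $\eqrelr = \rtc[a]\relr$ for brevity, I would establish symmetry by showing that the converse $\eqrelr\rconv$ is itself a reflexive, transitive and $a,a$-closed relation containing $\relr$, and then invoking the minimality clause of \cref{asm:closure}. Reflexivity of $\eqrelr\rconv$ follows from $\rid_X \order \eqrelr$ by applying the monotone operation $(\blank)\rconv$ and using $\rid_X\rconv = \rid_X$; transitivity follows from $\eqrelr\rconv\rcomp\eqrelr\rconv = (\eqrelr\rcomp\eqrelr)\rconv \order \eqrelr\rconv$, using that converse reverses composition and that $\eqrelr\rcomp\eqrelr\order\eqrelr$; and the containment $\relr \order \eqrelr\rconv$ holds because $\relr$ is symmetric, so $\relr = \relr\rconv\order\eqrelr\rconv$. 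For $a,a$-closedness the key observation is that $\lift{\mfun}$, being the fibre action of a 1-arrow in \RDtn, preserves converse, so that $\lift{\mfun}_{X,X}(\eqrelr\rconv) = (\lift{\mfun}_{X,X}(\eqrelr))\rconv$; taking the converse of the closure inequality $\gr{a}\rconv\rcomp\lift{\mfun}_{X,X}(\eqrelr)\rcomp\gr{a}\order\eqrelr$ and using $\relr\rdconv = \relr$ together with the anti-homomorphism law $(\relr\rcomp\rels)\rconv = \rels\rconv\rcomp\relr\rconv$ then yields exactly $\gr{a}\rconv\rcomp\lift{\mfun}_{X,X}(\eqrelr\rconv)\rcomp\gr{a}\order\eqrelr\rconv$.

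With $\eqrelr\rconv$ shown to be reflexive, transitive, $a,a$-closed and to contain $\relr$, the minimality in \cref{asm:closure} gives $\eqrelr = \rtc[a]\relr \order \eqrelr\rconv$; applying converse and involutivity turns this into $\eqrelr\rconv\order\eqrelr$, which is symmetry. Combined with the properties granted directly by \cref{asm:closure}, this shows that $\rtc[a]\relr$ is an $\EM\RDoc\mnd$-equivalence relation on $\ple{X,a}$.

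I expect the only genuinely non-mechanical step to be the stability of $a,a$-closedness under converse, which rests entirely on $\lift{\mfun}$ commuting with converse; everything else is routine bookkeeping with the involution axioms, the main care being to keep the orientation of each inequality correct when passing to converses.
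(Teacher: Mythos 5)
Your proposal is correct and follows essentially the same route as the paper's proof: both show that $(\rtc[a]\relr)\rconv$ is reflexive, transitive, $a,a$-closed (via $\lift{\mfun}$ preserving converse, since $\mfun$ is a 1-arrow in \RDtn) and contains $\relr$ (by symmetry of $\relr$), then invoke the minimality clause of \cref{asm:closure} to conclude symmetry of $\rtc[a]\relr$, from which the lemma follows by definition.
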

\begin{proof}
We have that 
$(\rtc[a]\relr)\rconv$ is reflexive as $\rid_X = \rid_X\rconv \order (\rtc[a]\relr)\rconv$, 
$(\rtc[a]\relr)\rconv$ is transitive as $(\rtc[a]\relr)\rconv\rcomp(\rtc[a]\relr)\rconv = (\rtc[a]\relr\rcomp\rtc[a]\relr)\rconv \order (\rtc[a]\relr)\rconv$, 
$\rtc[a]\relr\rconv$ is $a,a$-closed as $\gr{a}\rconv\rcomp\lift\mfun_{X,X}(\rtc[a]\relr\rconv)\rcomp\gr{a} = (\gr{a}\rconv\rcomp\lift\mfun_{X,X}(\rtc[a]\relr)\rcomp\gr{a})\rconv \order \rtc[a]\relr\rconv$ and 
$(\rtc[a]\relr)\rconv$ extends $\relr$ as $\relr = \relr\rconv \order (\rtc[a]\relr)\rconv$ (since $\relr$ is symmetric). 
Therefore, by definition of $\rtc[a]\relr$ we conclude 
$\rtc[a]\relr\order (\rtc[a]\relr)\rconv$, proving that $\rtc[a]\relr$ is symmetric. 
Then, the thesis follows immediately by definition of $\rtc[a]\relr$. 
\end{proof}

\begin{theorem}\label[thm]{thm:compactification}
The functor \fun{\SPFun}{\EM\CC{\fn\mnd}}{\SP\mnd} has a left adjoint
\fun{\SCFun}{\SP\mnd}{\EM\CC{\fn\mnd}}. 
\end{theorem}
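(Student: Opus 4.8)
The plan is to realise $\SCFun$ through a family of universal arrows: for each $\mnd$-space $\ple{X,\ralg}$ I would build a $\fn\mnd$-algebra together with a continuous map into it enjoying the universal property of a reflection along $\SPFun$, so that by the general theory of adjoint functors these assemble into a left adjoint. The construction starts from the free $\fn\mnd$-algebra $\ple{\fn\mfun X,\mmul_X}$ on $X$. On it I would consider the relation $\ralg\rcomp\ralg\rconv$ in $\RDoc(\fn\mfun X,\fn\mfun X)$, which relates two generalised sequences sharing a limit point under $\ralg$. This relation is symmetric, so by \cref{lem:closure-eq} its closure $\eqrelr = \rtc[\mmul_X]{(\ralg\rcomp\ralg\rconv)}$, available thanks to \cref{asm:closure}, is an $\EM\RDoc\mnd$-equivalence relation on $\ple{\fn\mfun X,\mmul_X}$. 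By \cref{thm:em-quotients} it admits a quotient arrow $\fun{q}{\fn\mfun X}{W}$ in $\EM\CC{\fn\mnd}$, with some algebra structure \fun{b}{\fn\mfun W}{W}; I would set $\SCFun\ple{X,\ralg} = \ple{W,b}$ and take $\fun{q\circ\mun_X}{X}{W}$ as the candidate unit.

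The heart of the argument is a correspondence I would establish. By the Eilenberg--Moore adjunction, algebra homomorphisms $\fun{g}{\ple{\fn\mfun X,\mmul_X}}{\ple{Z,c}}$ are in bijection with $\CC$-arrows $\fun{f}{X}{Z}$ via $f = g\circ\mun_X$ and $g = c\circ\fn\mfun f$, so that $\gr{g} = \gr{\fn\mfun f}\rcomp\gr{c}$ and $\gr{f} = \gr{\mun_X}\rcomp\gr{g}$. The claim is that under this bijection $\eqrelr\order\gr{g}\rcomp\gr{g}\rconv$ holds if and only if $f$ is continuous as a map $\fun{f}{\ple{X,\ralg}}{\SPFun\ple{Z,c}}$, that is, $\ralg\rcomp\gr{f}\order\gr{g}$. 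If $\eqrelr\order\gr{g}\rcomp\gr{g}\rconv$, then $\ralg\rcomp\ralg\rconv\order\gr{g}\rcomp\gr{g}\rconv$, and using the $\mnd$-space axiom $\gr{\mun_X}\order\ralg\rconv$ I would compute $\ralg\rcomp\gr{f} = \ralg\rcomp\gr{\mun_X}\rcomp\gr{g}\order\ralg\rcomp\ralg\rconv\rcomp\gr{g}\order\gr{g}\rcomp\gr{g}\rconv\rcomp\gr{g}\order\gr{g}$, the last step by functionality of $\gr{g}$. Conversely, from $\ralg\rcomp\gr{f}\order\gr{g}$ and totality of $\gr{f}$ I would get $\ralg\rcomp\ralg\rconv\order\ralg\rcomp\gr{f}\rcomp\gr{f}\rconv\rcomp\ralg\rconv\order\gr{g}\rcomp\gr{g}\rconv$, and the universal property of $\rtc[\mmul_X]{(\blank)}$ would promote this to $\eqrelr\order\gr{g}\rcomp\gr{g}\rconv$.

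For that promotion to be legitimate I must know that $\gr{g}\rcomp\gr{g}\rconv$ is reflexive, transitive and $\mmul_X,\mmul_X$-closed. Reflexivity and transitivity are immediate from totality and functionality of $\gr{g}$, and closedness comes for free: since $g$ is an arrow of $\EM\CC{\fn\mnd}$, its graph $\gr{g}$ is a relation of the doctrine $\EM\RDoc\mnd$, and closed relations are stable under composition and converse by \cref{prop:em-doc}, so $\gr{g}\rcomp\gr{g}\rconv$ is $\mmul_X,\mmul_X$-closed. With the correspondence in hand, universality follows: given a $\fn\mnd$-algebra $\ple{Z,c}$ and a continuous $\fun{f}{\ple{X,\ralg}}{\SPFun\ple{Z,c}}$, the associated $g = c\circ\fn\mfun f$ satisfies $\eqrelr\order\gr{g}\rcomp\gr{g}\rconv$, so the universal property of $q$ yields a unique algebra homomorphism $\fun{h}{\ple{W,b}}{\ple{Z,c}}$ with $h\circ q = g$, whence $\SPFun(h)\circ(q\circ\mun_X) = g\circ\mun_X = f$; uniqueness of $h$ follows because $h\circ q$ is forced to equal $g$ by the Eilenberg--Moore bijection and $q$ is a quotient. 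Taking $\ple{Z,c} = \ple{W,b}$ and $h = \id_{W}$ shows in particular that the candidate unit $q\circ\mun_X$ is itself continuous.

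The main obstacle I anticipate is precisely the correspondence of the second step, together with the observation of the third: the non-obvious point is that the kernel $\gr{g}\rcomp\gr{g}\rconv$ of an algebra homomorphism is automatically $\mmul_X,\mmul_X$-closed, which is exactly what lets one invoke the universal property of the closure $\rtc[\mmul_X]{(\blank)}$ to pass from $\ralg\rcomp\ralg\rconv\order\gr{g}\rcomp\gr{g}\rconv$ to $\eqrelr\order\gr{g}\rcomp\gr{g}\rconv$. Once the family of universal arrows $q\circ\mun_X$ is produced, functoriality of $\SCFun$ and naturality of the adjunction bijection are formal consequences of the theory of adjoint functors.
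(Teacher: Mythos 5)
Your proposal is correct and follows essentially the same route as the paper: quotient the free algebra $\ple{\fn\mfun X,\mmul_X}$ by the closure $\rtc[\mmul_X]{\ralg\rcomp\ralg\rconv}$ (using \cref{lem:closure-eq}, \cref{thm:em-quotients} and \cref{asm:closure}), take $q\circ\mun_X$ as unit, and reduce universality to the fact that the kernel $\gr{g}\rcomp\gr{g}\rconv$ of the transpose $g = c\circ\fn\mfun f$ is reflexive, transitive and $\mmul_X,\mmul_X$-closed, so that minimality of the closure applies. Your packaging of the verification as an Eilenberg--Moore correspondence is only an organizational variant, and in fact treats the uniqueness of the factorisation through the unit (via $h\circ q = g$ forced by the free-algebra bijection) more explicitly than the paper does.
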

\begin{proof}
Let \ple{X,\ralg} be a $\mnd$-space. 
Consider the relation $\eqrelr_\ralg$ in $\RDoc(\fn\mfun X,\fn\mfun X)$ defined by 
\[ \eqrelr_\ralg = \rtc[\mmul_X]{\ralg\rcomp\ralg\rconv} \]
Since $\ralg\rcomp\ralg\rconv$ is symmetric, by \cref{lem:closure-eq}, we have that $\eqrelr_\ralg$ is a $\EM\RDoc\mnd$-equivalence on \ple{\fn\mfun X,\mmul_X}. 
Let \fun{q_\ralg}{\ple{\fn\mfun X,\mmul_X}}{\ple{X_\ralg,a_\ralg}} be a quotient arrow of $\eqrelr_\ralg$, which exists by \cref{thm:em-quotients}, and consider the arrow \fun{\zeta_{\ple{X,\ralg}} = q_\ralg\circ\mun_X}{X}{X_\ralg}. 
We have that 
\begin{align*}
\ralg\rcomp\gr{\zeta_{\ple{X,\ralg}}}
  &= \ralg\rcomp\gr{\mun_X}\rcomp\gr{q_\ralg}  \\
  &\order \ralg\rcomp\ralg\rconv\rcomp\gr{q_\ralg} 
    &&\text{\ple{X,\ralg} is a $\mnd$-space} \\
  &\order \eqrelr_\ralg\rcomp\gr{q_\ralg}  \\
  &\order \gr{q_\ralg} 
    &&\text{$q_\ralg$ is a quotient} \\
  &=\gr{\fn\mfun \mun_X}\rcomp\gr{\mmul_X}\rcomp\gr{q_\ralg} 
    && \mmul_X\circ\fn\mfun\mun_X  = \id_{\fn\mfun X} \\
  &= \gr{\fn\mfun\mun_X}\rcomp\gr{\fn\mfun q_\ralg}\rcomp\gr{a_\ralg} 
    &&\text{$q_\ralg$ is a $\fn\mnd$-algebra homomorphism} \\ 
  &= \gr{\fn\mfun\zeta_{\ple{X,\ralg}}}\rcomp\gr{a_\ralg} 
\end{align*}
proving that \fun{\zeta_{\ple{X,\ralg}}}{\ple{X,\ralg}}{\ple{X_\ralg,\gr{a_\ralg}}} is an arrow in \SP\mnd. 
To conclude, we need to show that $\zeta_{\ple{X,\ralg}}$ is universal. 

Consider a $\fn\mnd$-algebra \ple{Z,c} and an arrow \fun{f}{\ple{X,\ralg}}{\ple{Z,\gr{c}}} in \SP\mnd and set $g = c\circ \fn\mfun f$. 
We have that 
\begin{align*}
\ralg\rcomp\ralg\rconv 
  &\order \ralg\rcomp\gr{f}\rcomp\gr{f}\rconv\rcomp\ralg 
    &&\text{$\gr{f}$ is total} \\ 
  &\order \gr{\fn\mfun f}\rcomp\gr{c}\rcomp\gr{c}\rconv\rcomp\gr{f}\rconv 
    &&\text{$f$ is an arrow in \SP\mnd} \\ 
  &= \gr{g}\rcomp\gr{g}\rconv 
\end{align*}
that is, $\gr{g}\rcomp\gr{g}\rconv$ extends $\ralg\rcomp\ralg\rconv$. 
Moreover,  \fun{g}{\ple{\fn\mfun X,\mmul_X}}{\ple{Z,c}} is a $\fn\mnd$-algebra homomorphism, hence 
$\gr{g}\rcomp\gr{g}\rconv$ is a $\EM\RDoc\mnd$-equivalence relation on \ple{\fn\mfun X,\mmul_X} and, in particular, 
it is reflexive, transitive and $\mmul_X,\mmul_X$-closed. 
Therefore, by definition of $\eqrelr_\ralg$, we deduce that 
$\eqrelr_\ralg \order \gr{g}\rcomp\gr{g}\rconv$. 
By the universal property of $q_\ralg$, we conclude that there is a unique $\fn\mnd$-algebra homomorphism 
\fun{f^\dagger}{\ple{X_\ralg}}{\ple{Z,c}}
making the following diagram commute 
\[\xymatrix{
  X 
  \ar[r]^-{\mun_X}
  \ar[d]_-{f} 
& \fn\mfun X 
  \ar[r]^-{q_\ralg}
  \ar[d]_-{\fn\mfun f}
& X_\ralg
  \ar@{..>}[d]^-{f^\dagger} 
\\
  Z
  \ar[r]^-{\mun_Z}
  \ar@(rd,ld)[rr]_-{\id_Z} 
& \fn\mfun Z
  \ar[r]^-{c}
& Z 
}\]
and this proves the thesis. 
\end{proof}

This results provides an elementary description of a Stone-Cech compactification, requiring few and simple assumptions on the ``logic'' of the doctrine $\RDoc$. 
In particular, we do not require the base category to be complete, while completeness is an essential hypothesis for \eg \cite{ClementinoH03}. 
On the other hand, \cite{ClementinoH03} works with lax monads, that in our context would mean requiring $\mfun$ only to laxly preserve relational composition and identities. 
Therefore, a precise comparison between the two constructions is still an open question, which we left for future work. 

We conclude the paoer, showing that, when $\RDoc$ satisfies the strong rule of unique choice, 
the compact Hausdorff $\mnd$-spaces arise as the strongly Cauchy-complete objects of a relational doctrine on \SP\mnd, obtained by change-of-base along the compactification functor $\SCFun$, thus generalising \refItem{ex:rel-sing}{kh}. 

\begin{corollary}\label[cor]{cor:sc-sruc}
If $\RDoc$ satisfies \SRUC, then 
$\EM\RDoc\mnd$ and $\complete{\EM\RDoc\mnd\bop\SCFun}$ are equivalent. 
\end{corollary}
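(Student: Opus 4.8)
The plan is to recognise the statement as a direct instance of \cref{cor:sruc-completion}, taking $\EM\CC{\fn\mnd}$ as the reflective subcategory, $\SP\mnd$ as the ambient category, $\SCFun$ as the reflector and $\EM\RDoc\mnd$ as the doctrine. The work thus reduces to checking that the two hypotheses of that corollary hold under the assumption that $\RDoc$ satisfies \SRUC.

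First I would establish that $\SPFun$ exhibits $\EM\CC{\fn\mnd}$ as a reflective subcategory of $\SP\mnd$ with reflector $\SCFun$. The functor $\SPFun$ is always faithful, and since $\RDoc$ satisfies \SRUC it is extensional by \cref{prop:sruc-vs-ruc}; hence \cref{prop:spfun-full} makes $\SPFun$ full, so it is fully faithful. By \cref{thm:compactification}, $\SPFun$ has the left adjoint $\SCFun$, and a fully faithful right adjoint has an invertible counit, which is exactly the data of a reflection onto $\EM\CC{\fn\mnd}$ with reflector $\SCFun$.

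Second I would check that $\EM\RDoc\mnd$ satisfies \SRUC. By \refItem{prop:chsp}{2}, the hypothesis that $\RDoc$ satisfies \SRUC forces $\CHEM\RDoc\mnd$ to satisfy \SRUC, and then \refItem{prop:chsp}{1} yields an isomorphism $\CHEM\RDoc\mnd\cong\EM\RDoc\mnd$ in \RDtn. Since \SRUC is a property invariant under isomorphism of relational doctrines, $\EM\RDoc\mnd$ satisfies \SRUC as well.

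With both facts in place, applying \cref{cor:sruc-completion} with $\SCompl = \SCFun$ gives precisely the equivalence of $\EM\RDoc\mnd$ and $\complete{\EM\RDoc\mnd\bop\SCFun}$. I expect the only subtle point to be the first one, namely forcing $\SPFun$ to be fully faithful so that the counit of $\SCFun\dashv\SPFun$ is invertible, which is what \cref{cor:sruc-completion} requires of its reflection; the rest is bookkeeping, transporting \SRUC across the isomorphism of \cref{prop:chsp} and matching the data of \cref{cor:sruc-completion}.
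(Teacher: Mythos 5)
Your proposal is correct and takes essentially the same approach as the paper's proof: show $\EM\RDoc\mnd$ satisfies \SRUC via \cref{prop:chsp}, show $\EM\CC{\fn\mnd}$ is reflective in $\SP\mnd$ via \cref{prop:sruc-vs-ruc}, \cref{prop:spfun-full} and \cref{thm:compactification}, and then apply \cref{cor:sruc-completion}. The only difference is that you spell out steps the paper leaves implicit, namely how the two parts of \cref{prop:chsp} combine to transport \SRUC to $\EM\RDoc\mnd$, and why full faithfulness of $\SPFun$ together with the left adjoint $\SCFun$ yields a reflection.
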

\begin{proof}
Since $\RDoc$ satisfies \SRUC, by \cref{prop:chsp}, we deduce that $\EM\RDoc\mnd$ satisfies \SRUC as well. 
Moreover, since $\RDoc$ is extensional by \cref{prop:sruc-vs-ruc}, we deduce that $\SPFun$ is full, by \cref{prop:spfun-full}. 
Hence, by \cref{thm:compactification}, $\EM\CC{\fn\mnd}$ is a reflective subcategory of \SP\mnd and so the thesis follows from \cref{cor:sruc-completion}. 
\end{proof}

\begin{example}
Let \fun{\Rel}{\bop\Set}{\Pos} be the doctrine of set-theoretic relations. 
This doctrine has quotients, the fibres are complete lattices and also satisfies the strong rule of unique choice. 
Let $\fn\mnd = \ple{\fn\mfun,\mun,\mmul}$ be a monad on \Set, where $\fn\mfun$ preserves weak pullbacks. 
Then, using the Barr extension \cite{Barr70}, this induces a monad $\mnd$ on $\Rel$ and, 
assuming the Axiom of Choice, this monad preserves quotients in $\Rel$. 
Therefore, \cref{asm:quotients,asm:closure} are satisfied and \cref{cor:sc-sruc} applies. 
\end{example}



\bibliographystyle{plainurl}  
\bibliography{biblio} 

%
%

\end{document}